\documentclass[12pt,a4paper]{amsart}
\textwidth=6in
\textheight=9in
\hoffset=-0.375in
\voffset=-0.75in

\usepackage{graphicx}
\usepackage{yhmath}
\usepackage{hyperref}
\usepackage{amsmath,verbatim}
\usepackage{amssymb}
\usepackage{pgf,tikz,pgfplots}
\pgfplotsset{compat=1.15}
\usetikzlibrary{arrows}
\usepackage[utf8]{inputenc}
\usepackage{mathrsfs}

\theoremstyle{plain}
\newtheorem{thm}{Theorem}[section]
\newtheorem{lemma}[thm]{Lemma}
\newtheorem{corollary}[thm]{Corollary}
\newtheorem{prop}[thm]{Proposition}
\newtoks\prt

\theoremstyle{definition}

\newtheorem{remark}[thm]{Remark}
\newtheorem{notation}[thm]{Notation}

\newtheorem{definition}[thm]{Definition}

\def\eqn#1$$#2$${\begin{equation}\label#1#2
\end{equation}}

\numberwithin{equation}{section}

\headheight=12pt
\def\arc#1{\wideparen{#1}}

\def\diam{\operatorname{diam}}
\def\dist{\operatorname{dist}}

\def\bA{\textbf{A}}
\def\bB{\textbf{B}}
\def\bC{\textbf{C}}
\def\bD{\textbf{D}}
\def\bE{\textbf{E}}

\def\bH{\textbf{H}}
\def\bV{\textbf{V}}
\def\L{\mathcal{L}}

\def\cR{\mathcal{R}}
\def\P{\mathcal{P}}
\def\U{\mathcal{U}}

\def\H{\mathcal{H}}
\def\Q{\mathcal{Q}}

\def\bd{\textbf{d}}
\def\bX{\textbf{X}}
\def\bY{\textbf{Y}}
\def\bW{\textbf{W}}
\def\bZ{\textbf{Z}}

\def\phi{\varphi}
\def\epsilon{\varepsilon}
\def\eps{\varepsilon}
 
\def\N{\mathbb N}
\def\er{\mathbb R}
\def\R{\mathbb R}

\newtoks\by
\newtoks\paper
\newtoks\book
\newtoks\jour
\newtoks\yr
\newtoks\pages
\newtoks\vol
\newtoks\publ
\def\ota{{\hbox\vol{???}}}
\def\cLear{\by=\ota\paper=\ota\book=\ota\jour=\ota\yr=\ota
\pages=\ota\vol=\ota\publ=\ota}
\def\endpaper{\the\by, {\the\paper},
\textit{\the\jour} \textbf{\the\vol} (\the\yr), \the\pages.\cLear}
\def\endbook{\the\by, \textit{\the\book}, \the\publ.\cLear}
\def\endprep{\the\by, \textit{\the\paper}, \the\jour.\cLear}
\def\endyearprep{\the\by, \textit{\the\paper}, \the\jour, (\the\yr).\cLear}
\def\name#1#2{#2 #1}

\definecolor{ffqqqq}{rgb}{1.,0.,0.}
\definecolor{qqttqq}{rgb}{0.,0.2,0.}
\definecolor{ubqqys}{rgb}{0.29411764705882354,0.,0.5098039215686274}
\definecolor{wwqqzz}{rgb}{0.4,0.,0.6}
\definecolor{qqttzz}{rgb}{0.,0.2,0.6}
\definecolor{ttzzqq}{rgb}{0.2,0.6,0.}
\definecolor{yqqqqq}{rgb}{0.5019607843137255,0.,0.}
\definecolor{uuuuuu}{rgb}{0.26666666666666666,0.26666666666666666,0.26666666666666666}
\definecolor{ttttff}{rgb}{0.2,0.2,1.}
\definecolor{ffqqqq}{rgb}{1.,0.,0.}
\definecolor{qqccqq}{rgb}{0.,0.8,0.}
\definecolor{qqwuqq}{rgb}{0.,0.39215686274509803,0.}
\definecolor{qqabqq}{rgb}{0.,0.5,0.1}
\definecolor{xdxdff}{rgb}{0.49019607843137253,0.49019607843137253,1.}
\definecolor{qqqqcc}{rgb}{0.,0.,0.8}
\definecolor{ududff}{rgb}{0.30196078431372547,0.30196078431372547,1.}

\def\de0#1{\rule[3pt]{#1}{0.4pt} \hspace{-0.1pt} \rule[3.05pt]{0.05pt}{0.4pt} \hspace{-0.1pt} \rule[3.1pt]{0.05pt}{0.4pt} \hspace{-0.1pt} \rule[3.15pt]{0.05pt}{0.4pt} \hspace{-0.1pt} \rule[3.2pt]{0.05pt}{0.4pt} \hspace{-0.1pt} \rule[3.25pt]{0.05pt}{0.4pt} \hspace{-0.1pt} \rule[3.3pt]{0.05pt}{0.4pt} \hspace{-0.1pt} \rule[3.35pt]{0.05pt}{0.4pt} \hspace{-0.1pt} \rule[3.4pt]{0.05pt}{0.4pt} \hspace{-0.1pt} \rule[3.45pt]{0.05pt}{0.4pt} \hspace{-0.1pt} \rule[3.5pt]{0.05pt}{0.4pt} \hspace{-0.1pt} \rule[3.55pt]{0.05pt}{0.4pt} \hspace{-0.1pt} \rule[3.6pt]{0.05pt}{0.4pt} \hspace{-0.1pt} \rule[3.65pt]{0.05pt}{0.4pt} \hspace{-0.1pt} \rule[3.7pt]{0.05pt}{0.4pt} \hspace{-0.1pt} \rule[3.75pt]{0.05pt}{0.4pt} \hspace{-0.1pt} \rule[3.8pt]{0.05pt}{0.4pt} \hspace{-0.1pt} \rule[3.85pt]{0.05pt}{0.4pt} \hspace{-0.1pt} \rule[3.9pt]{0.05pt}{0.4pt} \hspace{-0.1pt} \rule[3.95pt]{0.05pt}{0.4pt} \hspace{-0.1pt} \rule[4.0pt]{0.05pt}{0.4pt} \hspace{-0.1pt} \rule[4.05pt]{0.05pt}{0.4pt} \hspace{-0.1pt} \rule[4.1pt]{0.05pt}{0.4pt} \hspace{-0.1pt} \rule[4.15pt]{0.05pt}{0.4pt} \hspace{-0.1pt} \rule[4.2pt]{0.05pt}{0.4pt} \hspace{-0.1pt} \rule[4.25pt]{0.05pt}{0.4pt} \hspace{-0.1pt} \rule[4.3pt]{0.05pt}{0.4pt} \hspace{-0.1pt} \rule[4.35pt]{0.05pt}{0.4pt} \hspace{-0.1pt} \rule[4.4pt]{0.05pt}{0.4pt} \hspace{-0.1pt} \rule[4.45pt]{0.05pt}{0.4pt} \hspace{-0.1pt} \rule[4.5pt]{0.05pt}{0.4pt} \hspace{-0.1pt} \rule[4.55pt]{0.05pt}{0.4pt} \hspace{-0.1pt} \rule[4.6pt]{0.05pt}{0.4pt} \hspace{-0.1pt} \rule[4.65pt]{0.05pt}{0.4pt} \hspace{-0.1pt} \rule[4.7pt]{0.05pt}{0.4pt} \hspace{-0.1pt} \rule[4.75pt]{0.05pt}{0.4pt} \hspace{-0.1pt} \rule[4.8pt]{0.05pt}{0.4pt} \hspace{-0.1pt} \rule[4.85pt]{0.05pt}{0.4pt} \hspace{-0.1pt} \rule[4.9pt]{0.05pt}{0.4pt} \hspace{-0.1pt} \rule[4.95pt]{0.05pt}{0.4pt} \hspace{-0.1pt} \rule[5.0pt]{0.05pt}{0.4pt} \hspace{-0.1pt} \rule[5.05pt]{0.05pt}{0.4pt} \hspace{-0.1pt} \rule[5.1pt]{0.05pt}{0.4pt} \hspace{-0.1pt} \rule[5.15pt]{0.05pt}{0.4pt} \hspace{-0.1pt} \rule[5.2pt]{0.05pt}{0.4pt} \hspace{-0.1pt} \rule[5.25pt]{0.05pt}{0.4pt} \hspace{-0.1pt} \rule[5.3pt]{0.05pt}{0.4pt} \hspace{-0.1pt} \rule[5.35pt]{0.05pt}{0.4pt} \hspace{-0.1pt} \rule[5.4pt]{0.05pt}{0.4pt} \hspace{-0.1pt} \rule[5.45pt]{0.05pt}{0.4pt} \hspace{-0.1pt} \rule[5.5pt]{0.05pt}{0.4pt} \hspace{-0.1pt} \rule[5.55pt]{0.05pt}{0.4pt} \hspace{-0.1pt} \rule[5.6pt]{0.05pt}{0.4pt} \hspace{-0.1pt} \rule[5.65pt]{0.05pt}{0.4pt} \hspace{-0.1pt} \rule[5.7pt]{0.05pt}{0.4pt} \hspace{-0.1pt} \rule[5.75pt]{0.05pt}{0.4pt} \hspace{-0.1pt} \rule[5.8pt]{0.05pt}{0.4pt} \hspace{-0.1pt} \rule[5.85pt]{0.05pt}{0.4pt} \hspace{-0.1pt} \rule[5.9pt]{0.05pt}{0.4pt} \hspace{-0.1pt} \rule[5.95pt]{0.05pt}{0.4pt} \hspace{-0.1pt} \rule[6.0pt]{0.05pt}{0.4pt}}	

\MakeRobust{\ref}

\makeatletter
\newcommand{\labeltext}[2]{%
	\@bsphack
	\def\@currentlabel{#1}{\label{#2}}%
	\@esphack
}
\makeatother

\def\step#1#2#3{\par \noindent{{\\ \bf Step~\labeltext{#1}{#3}#1. }{\bf #2. }}}

\begin{document}

\title[Minimal Extension for the $\alpha$-Manhattan norm]{Minimal Extension for the $\alpha$-Manhattan norm}

\author[D. Campbell]{Daniel Campbell}
\address{D.~Campbell: Department of Mathematics, University of Hradec Kr\' alov\' e, Rokitansk\'eho 62, 500 03 Hradec Kr\'alov\'e, Czech Republic} 
\email{daniel.campbell@uhk.cz}

\author[A. Kauranen]{Aapo Kauranen}
\address{A.~Kauranen: Department of Mathematics and Statistics, University of Jyv\"askyl\"a, PL 35, 40014 Jyv\"askly\"an yliopisto, Finland}
\email{aapo.p.kauranen@jyu.fi}

\author[E. Radici]{Emanuela Radici}
\address{E.~Radici: DISIM - Department of Information Engineering, Computer Science and Mathematics, University of L’Aquila, Via Vetoio 1 (Coppito), 67100 L’Aquila (AQ), Italy}
\email{emanuela.radici@univaq.it}

\thanks{The first author was supported by the grant GACR 20-19018Y}

\begin{abstract}
	Let $\partial \Q$ be the boundary of a convex polygon in $\er^2$, $e_\alpha = (\cos\alpha, \sin \alpha)$ and $e_{\alpha}^{\bot} = (-\sin\alpha , \cos \alpha)$ be a basis of $\er^2$ for some $\alpha\in[0,2\pi)$ and	$\phi:\partial\Q\to\er^2$ be a continuous, finitely piecewise linear injective map. We construct a finitely piecewise affine homeomorphism $v: \Q \to \er^2$ coinciding with $\phi$ on $\partial \Q$ such that the following property holds: $|\langle Dv, e_{\alpha}\rangle|(\Q)$ (resp. $\langle Dv, e_{\alpha}^{\bot}\rangle|(\Q)$) is as close as we want to $\inf |\langle Du, e_{\alpha}\rangle|(\Q)$ (resp. $\inf |\langle Du, e_{\alpha}^{\bot}\rangle|(\Q)$) where the infimum is meant over the class of all $BV$ homeomorphisms $u$ extending $\phi$ inside $\Q$. This result extends that already proven in \cite{PR2} in the shape of the domain.
\end{abstract}

\subjclass[2010]{Primary 46E35; Secondary 30E10, 58E20}
\keywords{homeomorphic extension, BV homeomorphisms, Strict approximation in BV}

\maketitle
\section{Introduction}

In this paper we are interested in the problem of extending injective continuous and piecewise linear boundary values from a convex polygon by piecewise affine homeomorphisms. The motivation for such a study arises in the context of approximation problems found in regularity theory for non-linear elasticity. There is already a plurality of extension results in a variety of contexts, which have been applied to solve various approximation problems. Let us now give an overview of some examples.

In general, we are interested in the approximation of a weakly differentiable homeomorphism, which we would like to approximate by $\mathcal{C}^1$ homeomorphisms or by locally finite piecewise affine homeomorphisms. The approximation of a planar $W^{1,p}$ homeomorphism $1<p<\infty$ in \cite{IKO1} and \cite{IKO2} relies heavily on the injectivity of the harmonic extension of convex boundary values. In \cite{DP} the authors were also able to approximate a bi-Lipschitz map and its inverse simultaneously in the $p,p$ bi-Sobolev setting and to do so used the extension result in \cite{DP2}. In order to solve the $W^{1,1}$ case in \cite{HP}, the authors had to develop an independent extension result in that paper which was further examined and improved in \cite{C} and \cite{R}. The extension result was also utilised in the $1,1$ bi-Sobolev setting in \cite{BiP}. Finally let us mention that the authors of \cite{PR3} approximate planar $BV$ homeomorphisms using an extension result they proved in \cite{PR2}.

More than just approximation of weakly differentiable homeomorphisms by diffeomorphisms these extension results have been key in examining the behaviour of weak and strong limits of homeomorphisms in their respective classes. Such results include a categorisation of the closure of $\operatorname{Hom} \cap W^{1,p}$, $p\geq2$ in \cite{IO}, a categorisation of the closure of $\operatorname{Hom} \cap W^{1,p}$, $1<p<2$ in \cite{PP} and partial $BV$ result in \cite{CKR}.

It was demonstrated in \cite{PR2} that their main extension result can be ``rotated'' to approximate a BV homeomorphism strictly and similarly in \cite{PR3} for the area-strict case. Nevertheless this approach makes the application of the extension result somewhat cumbersome and technical. The main result of the present paper is a piecewise affine homeomorphic extension that improves on that of \cite{PR2}. More precisely, we consider extensions of piecewise linear boundary values defined on boundary of convex quadrilaterals (and not only rectangles parallel to the coordinate axes as in \cite{PR2}) which are optimal in a particular BV sense. We emphasize that the generality of the class of convex quadrilaterals includes the ``rotated'' version of the extension result of \cite{PR2}. Also our Theorem~\ref{Componentwise ExtensionAlt} is stronger than the extension theorem there (not only because of the shape of $\Q$) in the sense that it immediately implies their extension theorem but the opposite is not true. Nevertheless this improvement is a case of separating estimates already conducted in \cite{PR2}.

The motivation for our extension theorem is the full categorisation result in \cite{CKR2}, where we identify a condition which guarantees that a map is a strict or area-strict limit of BV homeomorphisms. In the course of the approximation we want to work on grids that are not only made up of rectangles and we prefer to not have to rotate the rectangles. In that sense we need the current result, which we present below, after we set some necessary notation.

Let $\Q\subset \er^2$ be a convex polygon, let $\alpha\in [0,2\pi)$ be fixed and call $e_\alpha = (\cos\alpha, \sin \alpha)$ and $e_{\alpha}^{\perp} = (-\sin\alpha ,  \cos \alpha)$. 
We define the following numbers
\begin{equation}\label{Useful1}
		\begin{aligned}
			&a^-:=\inf\{\langle x,e_{\alpha}^{\bot}\rangle : x\in \Q\} \qquad & a^+:=\sup\{\langle x,e_{\alpha}^{\bot}\rangle :  x\in \Q\},\\
			&b^-:=\inf\{\langle x,e_{\alpha}\rangle : x\in \Q\},\qquad & b^+ := \sup\{\langle x,e_{\alpha}\rangle : x\in \Q\}.
		\end{aligned}
\end{equation}
For each $s \in (a^-, a^+)$ we define $V_s^1, V_s^2$ uniquely by the conditions
\begin{equation}\label{Useful2}
	V_s^1, V_s^2\in \partial \Q, \quad  \langle V_s^{1}, e_{\alpha}^{\bot} \rangle = \langle V_s^{2}, e_{\alpha}^{\bot} \rangle = s, \quad  \langle V_s^{1}, e_{\alpha} \rangle < \langle V_s^{2}, e_{\alpha} \rangle.
\end{equation}
Similarly for every $t \in (b^-, b^+)$ we define $H_t^1, H_t^2$ uniquely by
\begin{equation}\label{Useful}
	H_t^1, H_t^2\in \partial \Q, \quad  \langle H_t^{1}, e_{\alpha} \rangle = \langle H_t^{2}, e_{\alpha} \rangle = t, \quad  \langle H_t^{1}, e_{\alpha}^{\bot} \rangle < \langle H_t^{2}, e_{\alpha}^{\bot} \rangle.
\end{equation}

Let $\phi: \partial \Q \to \R^2$ be continuous, injective and piecewise linear. We denote $\P$ as the bounded component of $\R^2 \setminus \phi(\partial \Q)$. For every pair of points $\bA,\bB \in \overline{\P}$ we denote by $\rho_{\P}(\bA,\bB)$ the geodesic distance between $\bA$ and $\bB$ inside $\overline{\P}$. 
We define the quantity
$$
	\Psi_\alpha (\varphi):= \int_{a^-}^{a^+} \rho_\P (\phi(V_s^1), \phi(V_s^2)) ds + \int_{b^-}^{b^+} \rho_\P (\phi(H_t^1), \phi(H_t^2)) dt.
$$
Further for $u \in BV(\Omega,\R^2)$ we denote the $\alpha$-Manhattan norm of $Du$ as $\| \cdot \|_\alpha$ which we define as
$$
	\| Du \|_\alpha (\Q) :=  |\langle D u, e_\alpha\rangle|(\Q) + |\langle D u, e_\alpha^{\bot}\rangle|(\Q).
$$
The main results of the paper is are the follwoing.

\begin{thm}\label{thm: rotated minimal extension}
	Let $\alpha \in [0,2\pi)$ be fixed, $\Q \subset \R^2$ be a convex polygon and $\phi: \partial \Q \to \R^2$ be a continuous piecewise linear injective map.  Then for every $\eps >0$ there exists a finitely piecewise affine homeomorphism $v: \Q \to \R^2$ extending $\phi$, such that 
	\begin{equation}\label{eq: rotated minimal extension}
	\| Dv \|_\alpha (\Q) \leq \Psi_\alpha(\phi) + \eps. 
	\end{equation}
\end{thm}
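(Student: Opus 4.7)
The plan is to derive this theorem from the stronger componentwise extension result (Theorem~\ref{Componentwise ExtensionAlt}), which produces a single finitely piecewise affine homeomorphism $v$ extending $\phi$ and satisfying two separate directional bounds:
$$|\langle Dv,e_\alpha\rangle|(\Q)\le\int_{a^-}^{a^+}\rho_\P(\phi(V_s^1),\phi(V_s^2))\,ds+\tfrac{\eps}{2}$$
and the analogous bound with $e_\alpha^{\perp}$ in place of $e_\alpha$ and the $H_t^i$ integral. Since $\|Dv\|_\alpha(\Q)$ is by definition the sum of these two quantities, adding the inequalities and invoking the definition of $\Psi_\alpha(\phi)$ immediately gives \eqref{eq: rotated minimal extension}. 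The main content of this theorem is thus the componentwise statement, the passage to the sum being tautological.

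For the componentwise statement, after rotating so that $e_\alpha=e_1$, the Fubini representation of the BV seminorm gives
$$|\langle Dv,e_1\rangle|(\Q)=\int_{a^-}^{a^+}\mathrm{Var}\bigl(v|_{[V_s^1,V_s^2]}\bigr)\,ds,$$
and similarly for vertical slices. Since the total variation of a vector-valued curve dominates the variation of each of its coordinates, it suffices to construct $v$ so that for almost every $s$ the image $v([V_s^1,V_s^2])$ is a piecewise linear path in $\overline{\P}$ from $\phi(V_s^1)$ to $\phi(V_s^2)$ of length within $\eta$ of $\rho_\P(\phi(V_s^1),\phi(V_s^2))$, and symmetrically for vertical slices. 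Using the piecewise linearity of $\phi$, one samples finitely many levels $s_1<\cdots<s_N$ and $t_1<\cdots<t_M$ such that on each subinterval the slice-wise geodesic length varies piecewise linearly, chooses near-geodesic piecewise linear paths between the corresponding boundary images, and interpolates affinely on the grid of quadrilateral cells so generated, producing a finitely piecewise affine map by construction.

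The main obstacle is achieving simultaneous control in both directions together with injectivity: optimising slice-by-slice in one direction fixes all the vertical cross-sections, which a priori need not be near-geodesics. The PR2 mechanism overcomes this by exploiting that near-geodesic paths joining $\phi(V_s^1)$ and $\phi(V_s^2)$ inside $\overline{\P}$ can be chosen coherently in $s$ so that their combinatorial structure --- the sequence of edges of $\phi(\partial\Q)$ they hug --- is locally constant, whence the induced vertical cross-sections inherit a near-geodesic structure of their own; this is the ``separating of estimates already conducted in \cite{PR2}'' referenced in the introduction. A secondary difficulty specific to the polygonal (rather than rectangular) setting is the treatment of the triangular corner caps of $\Q$ near its extremal vertices, where one family of slices degenerates to a point; these are filled in by a direct ad-hoc piecewise affine extension whose contribution to both directional integrals can be made $O(\eps)$ by absolute continuity of the slice-wise geodesic length near $s=a^\pm$ and $t=b^\pm$, and by taking the sampling grid sufficiently fine there.
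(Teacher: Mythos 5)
Your reduction of Theorem~\ref{thm: rotated minimal extension} to the componentwise Theorem~\ref{Componentwise ExtensionAlt} is logically sound (the paper itself records this implication), and the broad outline --- rotate to $\alpha=0$, control slice variations via Fubini, use coherent near-geodesic choices in the spirit of \cite{PR2}, and patch the corners of $\Q$ separately --- matches the paper's strategy. However, a key piece is missing. You account only for the ``corner caps near the extremal vertices'' (the triangles $T_1,T_2$ produced by Lemma~\ref{lemma: skeleton triangles} and disposed of cheaply by the bi-affine Lemma~\ref{lemma: extension up & bottom triangles}), but you do not address the far larger obstruction coming from the \emph{slanted lateral sides} of $\Q$. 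After removing $T_1,T_2$ and slicing the remaining convex $\Delta$ into thin strips parallel to $e_\alpha$, each strip $S_i$ still has two slanted ends; the paper has to split $S_i$ into a central rectangle $R_i$, where Proposition~\ref{prop: straight minimal extension} applies, plus two right-angle wedge triangles $T_i^1,T_i^2$ whose hypotenuse sits on $\partial\Q$. Producing an injective finitely piecewise affine extension inside $T_i^{1,2}$ with an error term that is summable over all the strips is precisely Lemma~\ref{lemma: extension left & right triangles}, the main new lemma of the paper: it requires a second, recursive, strip decomposition inside each wedge, a bi-Lipschitz straightening of a residual polygon $\mathbb{P}$, and the crude bound of Corollary~\ref{coroll: non optimal extension} for the thin leftover piece. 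Your proposal to ``interpolate affinely on the grid of quadrilateral cells so generated'' tacitly assumes the truncated cells near the slanted boundary are harmless, but a naive affine interpolation on them does not simultaneously yield injectivity and the sharp slice-wise estimate; a quantitative construction of the kind in Lemma~\ref{lemma: extension left & right triangles} is needed to close the argument, and that is where the real content lies.

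Two smaller points. The phrase ``separating of estimates already conducted in \cite{PR2}'' in the introduction does not refer to the coherent-geodesic mechanism you describe; it refers to the observation that the two integrals making up $\Psi_0$ individually bound $|D_1 v|$ and $|D_2 v|$ (this is Theorem~\ref{thm:What}) rather than just their sum, and that separation is exactly what yields Theorem~\ref{Componentwise ExtensionAlt}. Also, your Fubini identity pairs $|\langle Dv,e_\alpha\rangle|$ with the $V_s$-slices, whereas Theorems~\ref{Componentwise ExtensionAlt} and~\ref{thm:What} pair $|\langle Dv,e_\alpha\rangle|$ with the $H_t$-slices; the paper's own definitions~\eqref{Useful1}--\eqref{Useful} and the proofs use conflicting conventions about which parameter runs in which direction, so this is a bookkeeping clash rather than a fatal error, but it would need to be resolved consistently in any complete write-up.
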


\begin{thm}\label{Componentwise ExtensionAlt}
	Let $\epsilon>0$ and let $v$ be the extension from Theorem~\ref{thm: rotated minimal extension} then
	\begin{equation}\label{eqComponentwise extensionAlt}
	\begin{aligned}
	| \langle Dv , e_{\alpha}\rangle|(\Q) &\leq\int_{b^-}^{b^+} \rho_\P (\phi(H_t^1), \phi(H_t^2)) dt + \epsilon \text{ and }\\
	| \langle Dv , e_{\alpha}^{\bot}\rangle|(\Q) &\leq  \int_{a^-}^{a^+} \rho_\P (\phi(V_s^1), \phi(V_s^2)) ds +\epsilon. 
	\end{aligned}
	\end{equation}
\end{thm}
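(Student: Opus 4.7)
The plan is to derive the two componentwise estimates of \eqref{eqComponentwise extensionAlt} from the single aggregate estimate of Theorem~\ref{thm: rotated minimal extension} by combining the latter with universal \emph{lower} bounds on each directional variation that hold for any BV homeomorphism extending $\phi$. Since the sum of these two lower bounds will equal precisely $\Psi_\alpha(\phi)$, subtracting either lower bound from the aggregate upper bound leaves the desired componentwise upper bound for the other direction.

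First I would establish that for every BV homeomorphism $u:\overline{\Q}\to\overline{\P}$ whose trace on $\partial\Q$ equals $\phi$, the standard Fubini-type slicing of a vector-valued BV measure yields
\begin{equation*}
|\langle Du, e_\alpha\rangle|(\Q) \geq \int_{a^-}^{a^+} \rho_\P(\phi(V_s^1),\phi(V_s^2))\, ds \quad\text{and}\quad |\langle Du, e_\alpha^\bot\rangle|(\Q) \geq \int_{b^-}^{b^+} \rho_\P(\phi(H_t^1),\phi(H_t^2))\, dt.
\end{equation*}
The slicing identity expresses $|\langle Du,e_\alpha\rangle|(\Q)$ as the integral over $s\in(a^-,a^+)$ of the 1D total variation of the restriction $u|_{V_s^1 V_s^2}$, and analogously in the perpendicular direction. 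Since $u$ is a homeomorphism, each such restriction is a continuous arc in $\overline{\P}$ joining the images of the two boundary endpoints, and its 1D total variation equals the Euclidean length of the traced path, itself bounded below by the geodesic distance $\rho_\P$ between its endpoints.

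Applied to the map $v$ from Theorem~\ref{thm: rotated minimal extension}, which is in particular a BV homeomorphism, these lower bounds combine with $\|Dv\|_\alpha(\Q) \leq \Psi_\alpha(\phi)+\varepsilon$ by elementary subtraction: for instance,
\begin{equation*}
|\langle Dv, e_\alpha^\bot\rangle|(\Q) \leq \|Dv\|_\alpha(\Q) - |\langle Dv, e_\alpha\rangle|(\Q) \leq \Psi_\alpha(\phi) + \varepsilon - \int_{a^-}^{a^+}\rho_\P(\phi(V_s^1),\phi(V_s^2))\,ds = \int_{b^-}^{b^+}\rho_\P(\phi(H_t^1),\phi(H_t^2))\,dt + \varepsilon
\end{equation*}
by the definition of $\Psi_\alpha(\phi)$. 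The symmetric manipulation, using the lower bound on $|\langle Dv, e_\alpha^\bot\rangle|(\Q)$, produces the companion upper bound for $|\langle Dv, e_\alpha\rangle|(\Q)$, establishing both inequalities of \eqref{eqComponentwise extensionAlt}.

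The substantial ingredient is the slicing lower bound above. For a general BV homeomorphism one has to select the correct representative of $u$ so that almost every 1D slice is a genuinely continuous arc in $\overline{\P}$ (rather than a BV curve that might admit jumps), and then invoke the length/total-variation identity on the slice. For the specific $v$ produced by Theorem~\ref{thm: rotated minimal extension}, however, the matter is trivial: $v$ is finitely piecewise affine, and each of its 1D slices is a piecewise linear path inside $\overline{\P}$ whose 1D total variation manifestly coincides with its Euclidean length. The subtraction step is then a one-line algebraic manipulation.
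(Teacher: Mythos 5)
Your approach is genuinely different from the paper's, and in fact more economical. The paper proves Theorem~\ref{Componentwise ExtensionAlt} by retracing the entire construction --- re-running Lemma~\ref{lemma: skeleton triangles}, Lemma~\ref{lemma: skeleton strips}, and Lemma~\ref{lemma: extension left \& right triangles} while keeping the two directional estimates (the $H_t$-integral and the $V_s$-integral) separate at each step instead of summing them to $\Psi_\alpha$, and finally feeding the componentwise bounds of Theorem~\ref{thm:What} into the proof of Theorem~\ref{thm: rotated minimal extension}. You instead take the aggregate bound $\|Dv\|_\alpha(\Q)\le\Psi_\alpha(\phi)+\eps$ as a black box and combine it with the slicing \emph{lower} bounds valid for every BV homeomorphism; since the two lower bounds sum to exactly $\Psi_\alpha(\phi)$, a one-line subtraction gives both componentwise upper bounds. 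This is correct, and the slicing lower bound itself is completely routine for the finitely piecewise affine map $v$ (each 1D slice is a continuous PL arc in $\overline\P$ joining $\phi$-values of the endpoints, so its total variation equals its length, which is at least the geodesic distance). Your argument thus shows that Theorem~\ref{thm: rotated minimal extension} actually \emph{implies} Theorem~\ref{Componentwise ExtensionAlt} by soft reasoning; the paper's remark that the implication goes only the other way is overly pessimistic.

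One substantive point to flag: the pairing of directions with integrals. Slicing $|\langle Du,e_\alpha\rangle|$ parametrizes lines in direction $e_\alpha$ by their $e_\alpha^\perp$-coordinate $s\in(a^-,a^+)$, and the line at level $s$ meets $\Q$ in $V_s^1V_s^2$ (these are the points with $\langle V_s^{1,2},e_\alpha^\perp\rangle=s$ by \eqref{Useful2}). Hence the slicing lower bound you quote, $|\langle Du,e_\alpha\rangle|(\Q)\ge\int_{a^-}^{a^+}\rho_\P(\phi(V_s^1),\phi(V_s^2))\,ds$, is the correct one, and subtraction then yields $|\langle Dv,e_\alpha\rangle|(\Q)\le\int_{a^-}^{a^+}\rho_\P(\phi(V_s^1),\phi(V_s^2))\,ds+\eps$ together with $|\langle Dv,e_\alpha^\perp\rangle|(\Q)\le\int_{b^-}^{b^+}\rho_\P(\phi(H_t^1),\phi(H_t^2))\,dt+\eps$. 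This is the \emph{opposite} pairing from the one displayed in \eqref{eqComponentwise extensionAlt} and in Theorem~\ref{thm:What} and in the two unnumbered lower-bound inequalities in the introduction, all of which attach $|\langle Dv,e_\alpha\rangle|$ to the $H_t$-integral. Given the definitions \eqref{Useful1}--\eqref{Useful}, the pairing you produce is the one dictated by the Fubini/coarea slicing of the directional variation, so the displayed statement in the paper appears to have its two right-hand sides interchanged (a notational slip, since the two must sum to $\Psi_\alpha$ in either case). You should state explicitly which pairing you are proving, and not assert at the end that you have ``established both inequalities of \eqref{eqComponentwise extensionAlt}'' verbatim, since as written your conclusion and the display do not match.
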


Let us remark that Theorem~\ref{Componentwise ExtensionAlt} immediately implies Theorem~\ref{thm: rotated minimal extension} but the argument used to construct $v$ is exactly the same. Also it is immediate that
$$
\int_{b^-}^{b^+} \rho_\P (\phi(H_t^1), \phi(H_t^2)) dt \leq \inf\big\{ |\langle Du, e_{\alpha}\rangle|(\overline{\Q}): u\in \operatorname{Hom}\cap BV(\overline{\Q},\er^2), u=\varphi \text { on }\partial \Q  \big\}
$$
and
$$
\int_{a^-}^{a^+} \rho_\P (\phi(V_s^1), \phi(V_s^2)) ds\leq\inf\big\{ |\langle Du, e_{\alpha}^{\bot}\rangle|(\overline{\Q}): u\in \operatorname{Hom}\cap  BV(\overline{\Q},\er^2), u=\varphi \text { on }\partial \Q  \big\}
$$
and our result in fact shows that there is a sequesnce of homeomorphisms achieving the infimum and having variation converging to the left hand side in the sense of \eqref{eqComponentwise extensionAlt}. This fact is actually a direct consequence of the proofs in \cite{PR2}, though it was not explicitely remarked there. The key argument is in Theorem~\ref{thm:What}.

\subsection{Sketch of the proof}

Before expounding the proof in detail, let us look at an overview of the proof. We start with a convex polygon $\Q$. Up to a rotation of $\alpha$ we may assume that $\alpha = 0$. Either (the rotated) $\Q$ has horizontal sides, or after removing a tiny triangle called $T_1$ close to the lowest point of $\Q$ and a triangle called $T_2$ close to the highest point of $\Q$ we get a convex $\Delta$ that has a pair of horizontal sides (see Figure~\ref{Fig:Setup}). We extend $\phi$ on $\partial T_1, \partial T_2$ so that it is continuous injective and piecewise linear. By making the triangles small enough we guarantee that $\Psi_0(\phi(\Delta)) +\Psi_0(\phi(T_1))+\Psi_0(\phi(T_2)) \leq \Psi_0(\phi(\Q))+ \eps$. Here our new $\phi$ extends the original $\phi$ from $\partial\Q$. This step is Lemma~\ref{lemma: skeleton triangles}.

Now we separate $\Delta$ into thin horizontal strips $S_i$ (see Figure~\ref{Fig:Slicing}), defining a continuous injective piecewise linear $\phi$ on $\partial S_i$ so that $\sum_{i=1}^M\Psi_0(\phi(\partial S_i)) \leq \Psi_0(\phi(\partial \Delta)) + \eps$ which extends the original $\phi$ from $\partial\Delta \cup \partial T_1 \cup \partial T_2$. This step is Lemma~\ref{lemma: skeleton strips}.

We separate each $S_i$ into a central rectangle and a pair of right-angle triangles at each end. On the rectangular domains $R_i$ we can use Proposition~\ref{prop: straight minimal extension} to extend the boundary values and get a piecewise affine homeomorphism $w_i$ on the $R_i$ satisfying an estimate on $|Dw_i|(R_i)$. In Lemma~\ref{lemma: extension left & right triangles} we show how we extend the boundary values to get a piecewise affine homeomorphism on the triangles at the ends of the strips, see Figure~\ref{Fig:Conquer}. We do this by further separating them into even thinner rectangles where we can extend and estimate as above. The remaining part of the set is so small that its contribution to the norm is bounded by $2^{-i}\eps$.

The final part of the proof is collating the estimates and summing to estimate that our mapping $v$ satisfies \eqref{eq: rotated minimal extension}.

\section{Preliminaries}

In this subsection we recall a list of definitions and known geometrical results which are already available in the literature.  Most of them are taken from \cite{PR2} and \cite{PR3}. 

\begin{notation}
	Throughout the paper we endeavour to keep to the following norms of notation:
	\begin{itemize}
		\item[$\cdot$] $\Q$ is a convex polygon,
		\item[$\cdot$] $\alpha\in[0,2\pi)$ is a given angle and the vector $e_{\alpha}:=(\cos\alpha, \sin\alpha)$. Also we denote $e_{\alpha}^{\bot}:=(\cos(\alpha+\pi /2), \sin(\alpha+\pi /2))$,
		\item[$\cdot$] $u$ and $v$ are planar $BV$ mappings,
		\item[$\cdot$] $a^-, a^+, b^-, b^+$ are the numbers from \eqref{Useful1}, typically $s \in (a^-, a^+)$ and $t \in (b^-, b^+)$ and $\ell = a^+ - a^-$, $h = b^+ - b^-$,
		\item[$\cdot$] $\Delta$ is a convex polygon with 2 sides parallel to $\alpha$,
		\item[$\cdot$] $T, T_1, T_2, \tilde{T}, T_i^1, T_i^2$ are all triangles,
		\item[$\cdot$] $V_s^1, V_s^2, H_t^1, H_t^2$ are the points satisfying the conditions in \eqref{Useful2} and \eqref{Useful} although we may replace $\Q$ with another convex polygon, for example $\Delta$ or $T$,
		\item[$\cdot$] by $\cR_{\Q} = [a^-,a^+]e_{\alpha} + [b^{-}, b^+]e_{\alpha}^{\bot}$ we denote the smallest rectangle with sides parallel to $e_{\alpha}$ and $e_{\alpha}^{\bot}$ containing $\Q$,
		\item[$\cdot$] $c_1, c_2, d_1, d_2, \in \R$ are ordinates,
		\item[$\cdot$] by $\tilde{C}$ we denote a generic constant whose precise value may vary between estimates,
		\item[$\cdot$] points in the preimage are $A,B,C, D,E,F,G, P,Q$,\footnote{We do not need to utilise the notation $B(x,r) = \{y: |y-x|<r\}$ so there is no danger of confusion when using $B$ to denote a point.}
		\item[$\cdot$] $\bd := |\varphi(A) - \varphi(C)|$ is the length of the image of the hypotenuse of the triangle $ABC$ in $\varphi$,
		\item[$\cdot$] $\beta\in (0,\frac{\pi}{2})$ is the angle at $A$ in the triangle $ABC$,
		\item[$\cdot$] $\eta>0$ is a small chosen parameter,
		\item[$\cdot$] points in the image are written in bold font e.g. $\bA, \bB, \bC, \bD, \bX,\bY,\bZ$,
		\item[$\cdot$] $\P, \P_0, \P^+$ are polygons in the image, typically the piecewise affine image of a polygon in the preimage e.g. $\P = \phi(\partial \Q)$,
		\item[$\cdot$] $\phi$, $\psi$ are continuous injective piecewise linear maps from one dimensional `skeletons' (i.e. a finite union of segments) in the preimage,
		\item[$\cdot$] $\gamma_{\bA, \bB}$ is the geodesic curve from $\bA$ to $\bB$ in $\P$ and $\rho_{\P}(\bA,\bB)$ is the length of that curve.
	\end{itemize}
\end{notation}

\begin{remark}[Geodesics and modified geodesics]\label{def: geodesics and modified geodesics}
Let $\P \subset \R^2$ be a polygon, and let $\bA$ and $\bB$ be any two distinct points in $\P$.  We define $\gamma_{\bA\bB}$ the unique geodesic (i.e., curve of minimal length) connecting them, lying inside $\P$. Notice that $\gamma_{\bA\bB}$ is a piecewise linear curve, whose vertices are only $\bA, \bB$ and some vertices of $\partial \P$ whose internal angles have size at least $\pi$. Assume now that $\bA, \bB \in \partial \P$, and let
$\bW1, \bW_2,  .\ldots, \bW_K$ be all the vertices of $\P$ met by $\gamma_{\bA\bB}$, so that $\gamma_{\bA\bB} = \bA \bW_1, \bW_2,  \ldots, \bW_K \bB$.
Fix now any $\delta > 0$. For every $1 \leq i \leq K$ let $\widetilde{\bW}_i \neq \bW_i$ be some   
arbitrary point in the internal bisector of the angle at $\bW_i$ having distance from $\bW_i$ smaller than $\delta$.
The piecewise linear curve $\tilde{\gamma}_{\bA\bB} =  \bA \widetilde{\bW}_1,  \widetilde{\bW}_2,  \ldots,  \widetilde{\bW}_K \bB$ is then called a
\emph{$\delta$-modification} of $\gamma_{\bA\bB}$.
\end{remark}

Notice that there exists a constant $\bar{\delta}(\P) > 0$,  depending on $\P$ but not on $\bA$ and $\bB$, such that the interior of $\tilde{\gamma}_{\bA\bB}$ is contained in the interior of $\P$ if $\delta < \bar{\delta}(\P)$, unless the segment $\bA\bB$ is already contained in $\partial \P$, in which case $K = 0$ and $\tilde{\gamma}_{\bA\bB} = \gamma_{\bA\bB} \subseteq \partial \P$.

\begin{lemma}[\cite{PR2}, Lemma 2.4]\label{lemma: 2.4 Pratelli2}
Let $\bA, \bB, \bC$ and $\bD$ be four distinct points in a polygon $\P$. Then the intersection $\gamma_{\bA\bB} \cap \gamma_{\bC\bD}$ is either empty or closed and connected.  Assume now also that $\bA, \bB, \bC, \bD\in \partial \P$ and call $\P_1, \P_2$ the two components of $\P\setminus \{ \bC, \bD\}$. If $\bA\in \P_1$ and $\bB\in \P_2$ then $\gamma_{\bA\bB}\cap \gamma_{\bC\bD}\neq \emptyset$. If $\bA, \bB\in \P_1$ and $\gamma_{\bA\bB}\cap \gamma_{\bC\bD}\neq \emptyset$ then the first and last point of this intersection must be either vertices of $\P$ or coincide with one of the points $\bA$ or $\bB$.
\end{lemma}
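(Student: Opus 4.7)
The plan is to handle the three statements in order: closedness and connectedness of $\gamma_{\bA\bB}\cap\gamma_{\bC\bD}$, the forced crossing when $\bA,\bB$ lie in different components, and the endpoint characterization when they lie in the same component. Throughout, I would rely on two standing facts about $\P$, both recorded in Remark~\ref{def: geodesics and modified geodesics}: that a geodesic in $\P$ is piecewise linear with bending vertices only at reflex vertices of $\partial\P$, and that the geodesic between any two points of $\P$ is unique because $\P$ is a simply connected planar polygon.

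Closedness of $\gamma_{\bA\bB}\cap\gamma_{\bC\bD}$ is immediate since both curves are compact. For connectedness I would argue by contradiction. If $\bX_1$ and $\bX_2$ lie in distinct connected components of the intersection, with $\bX_1$ preceding $\bX_2$ along $\gamma_{\bA\bB}$ parametrized from $\bA$, I splice the sub-arc $\gamma_{\bA\bB}|_{\bA\to\bX_1}$, followed by the sub-arc of $\gamma_{\bC\bD}$ joining $\bX_1$ to $\bX_2$ (in whichever direction), followed by $\gamma_{\bA\bB}|_{\bX_2\to\bB}$. Sub-arcs of geodesics are geodesics, so the two candidate sub-arcs from $\bX_1$ to $\bX_2$ have equal length; hence the spliced curve has the same total length as $\gamma_{\bA\bB}$. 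By uniqueness of geodesics the two sub-arcs must coincide, placing $\bX_1$ and $\bX_2$ in the same component of the intersection, a contradiction.

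For the second statement, $\gamma_{\bC\bD}$ together with one of the two arcs of $\partial\P$ joining $\bC$ to $\bD$ forms a Jordan curve separating $\P_1$ from $\P_2$; any curve from $\bA\in\P_1$ to $\bB\in\P_2$ lying inside $\P$ must cross this Jordan curve, and since the boundary portion is not accessible from inside $\P$, the crossing must lie on $\gamma_{\bC\bD}$. For the third statement, assume $\bA,\bB\in\P_1$ and let $\bX$ denote either the first or last point of the (by now connected) intersection along $\gamma_{\bA\bB}$. Suppose for contradiction that $\bX\notin\{\bA,\bB\}$ and $\bX$ is not a vertex of $\P$; then $\bX$ lies in the interior of $\P$ and both geodesics pass through $\bX$ as honest line segments, since the only allowed bending points are reflex vertices of $\P$. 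Either the two segments coincide in a neighborhood of $\bX$, in which case $\bX$ is an interior point of the intersection arc, contradicting that it is an endpoint; or they meet transversally at $\bX$, in which case $\gamma_{\bA\bB}$ crosses from $\P_1$ to $\P_2$ there and the intersection is locally just $\{\bX\}$, so a second and disjoint intersection is needed to bring the curve back to $\bB\in\P_1$, contradicting the connectedness already established. The main obstacle, as expected, is precisely this last dichotomy: one must use piecewise linearity together with the non-vertex assumption to guarantee that $\bX$ sits in the relative interior of straight pieces of both geodesics, so that only the transversal-vs.-coincident alternatives arise.
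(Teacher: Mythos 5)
This lemma is cited from \cite{PR2} (their Lemma~2.4) and is not re-proved in the present paper, so there is no in-paper proof to compare against; I therefore assess your argument on its own terms.

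Your treatment of the first two assertions is sound. Connectedness via uniqueness of geodesics in a simply connected planar polygon is exactly the right mechanism (indeed, you do not even need the splicing device: once you observe that both sub-arcs $\gamma_{\bA\bB}|_{\bX_1\to\bX_2}$ and $\gamma_{\bC\bD}|_{\bX_1\to\bX_2}$ are geodesics between the same pair of points, uniqueness forces them to coincide). The Jordan separation argument for the second assertion is also fine, with the caveat that one should say plainly that $\gamma_{\bC\bD}$ separates $\overline\P$ into two pieces whose traces on $\partial\P$ are the two arcs, and any curve in $\overline\P$ from a point of one arc to a point of the other must meet the separating set $\gamma_{\bC\bD}$; the remark about the boundary arc being ``not accessible'' is a distraction.

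There is a genuine gap in the third part. From ``$\bX\notin\{\bA,\bB\}$ and $\bX$ is not a vertex of $\P$'' you conclude ``$\bX$ lies in the interior of $\P$,'' but this does not follow: a geodesic can run along a whole sub-segment of a side of $\partial\P$, so $\bX$ may lie in the relative interior of a side. That case does lead to the desired contradiction, but you must actually argue it: since $\bX$ is not a vertex, near $\bX$ both geodesics are straight, and any straight segment through a non-vertex boundary point that stays inside $\overline\P$ must lie on the side containing $\bX$; hence both geodesics coincide with that side locally, so $\bX$ is an interior point of the intersection, contradicting its extremality. You should also separately exclude $\bX\in\{\bC,\bD\}$, because there $\gamma_{\bC\bD}$ does \emph{not} pass through $\bX$ ``as an honest line segment'' (it merely starts there), so your coincident-vs.-transversal dichotomy does not apply as stated. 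This case can be ruled out by first showing, using the connectedness already proved, that $\gamma_{\bA\bB}\subset\overline{Q_1}$ where $Q_1$ is the piece of $\P$ cut off by $\gamma_{\bC\bD}$ whose boundary trace on $\partial\P$ is $\P_1$: if $\gamma_{\bA\bB}$ left $\overline{Q_1}$ it would have to cross $\gamma_{\bC\bD}$ at two parameter values, whence by connectedness of the intersection the whole sub-arc between them lies on $\gamma_{\bC\bD}\subset\overline{Q_1}$, a contradiction. Once $\gamma_{\bA\bB}\subset\overline{Q_1}$, a non-vertex $\bC$ cannot lie on $\gamma_{\bA\bB}$ (the local segment would overshoot into the other arc). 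With these two repairs your strategy goes through, but as written the argument is incomplete.
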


\begin{lemma}[\cite{PR2},  Lemma 2.5]\label{lemma: 2.5 Pratelli2}
Let $\P$ be a polygon, let $\bA,\bB \in \partial \P$ be two points such that the segment $\bA\bB$ is not contained in $\partial \P$, then let $\delta < \bar{\delta}(\P)$ and let $\tilde{\gamma}_{\bA\bB}$ be a modified geodesic in
the sense of Definition \ref{def: geodesics and modified geodesics}.  Let also $\P_1$ and $\P_2$ be the two polygons in which $\P$ is divided by $\tilde{\gamma}_{\bA\bB}$,  and let $\eps > 0$ be a given constant. 
If $\delta$ is small enough, depending only on $\eps$ and $\P$, then the following is true:
\begin{enumerate}
\item[] For any two points $\bC,\bD \in \P_i$ for $i \in  \{1,2\}$ one has 
\begin{equation}\label{eq: stesso poligono}
\rho_{\P_i}(\bC,\bD) <   \rho_{\P}(\bC,\bD) + \eps
\end{equation}
\item[] If $\bC \in \P_1$, $\bD \in \P_2$ and $\bE \in \partial \P_1 \cap \partial \P_2$ is any point with distance at most $\delta$ from $\gamma_{\bC\bD}$, then
\begin{equation}\label{eq: diversi poligon}
 \rho_{\P_1}(\bC,\bE) + \rho_{\P_2}(\bE,\bD) < \rho_\P (\bC,\bD) + \eps. 
\end{equation} 
\end{enumerate}
\end{lemma}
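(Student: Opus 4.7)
The plan is to argue by stability: as $\delta \to 0$, the modified geodesic $\tilde{\gamma}_{\bA\bB}$ converges to $\gamma_{\bA\bB}$ in Hausdorff distance, and the partition of $\P$ into $\P_1, \P_2$ becomes essentially the cut of $\P$ along $\gamma_{\bA\bB}$ itself. Since geodesic distance inside a polygon is stable under such small perturbations of the domain, both inequalities must hold once $\delta$ is chosen small enough; the real work is to make the bound quantitative and uniform in $\bC$, $\bD$ and $\bE$.

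For \eqref{eq: stesso poligono}, given $\bC, \bD \in \P_i$ I would start from the geodesic $\gamma_{\bC\bD}$ in $\P$ and build a competitor that lies in $\P_i$. If $\gamma_{\bC\bD}$ does not meet $\tilde{\gamma}_{\bA\bB}$ then it is already contained in $\P_i$ and nothing is to prove. Otherwise, Lemma~\ref{lemma: 2.4 Pratelli2} tells us that $\gamma_{\bC\bD} \cap \gamma_{\bA\bB}$ is closed and connected, with endpoints either at vertices $\bW_j$ of $\P$ or at $\bA$ or $\bB$. I would then follow $\gamma_{\bC\bD}$ as long as it stays in $\P_i$; whenever it would cross into $\P_{3-i}$ I slide along $\tilde{\gamma}_{\bA\bB}$ on the $\P_i$-side, and continue with $\gamma_{\bC\bD}$ once it re-enters $\P_i$. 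Each local detour around a modified vertex $\widetilde{\bW}_j$ contributes at most $O(\delta)$ to the length; since the number of such vertices is bounded by a constant $K(\P)$ depending only on $\P$, the total extra length is at most $C(\P)\,\delta$, which is smaller than $\eps$ once $\delta$ is small enough.

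For \eqref{eq: diversi poligon}, I would again start with the geodesic $\gamma_{\bC\bD}$ in $\P$. Since $\bC \in \P_1$ and $\bD \in \P_2$, this geodesic must meet $\tilde{\gamma}_{\bA\bB}$; let $\bF$ be a point of $\gamma_{\bC\bD} \cap \tilde{\gamma}_{\bA\bB}$ closest to $\bE$. Because $\bE$ lies on $\tilde{\gamma}_{\bA\bB}$ within distance $\delta$ of $\gamma_{\bC\bD}$ and $\bF$ lies on both, the geometry forces $|\bE - \bF| = O(\delta)$. Splitting $\rho_\P(\bC, \bD) = \rho_\P(\bC, \bF) + \rho_\P(\bF, \bD)$ along the geodesic and applying part (i) to each half yields
\begin{equation*}
\rho_{\P_1}(\bC, \bF) \leq \rho_\P(\bC, \bF) + \tfrac{\eps}{4}, \qquad \rho_{\P_2}(\bF, \bD) \leq \rho_\P(\bF, \bD) + \tfrac{\eps}{4}.
\end{equation*}
The arc of $\tilde{\gamma}_{\bA\bB}$ from $\bF$ to $\bE$ lies on $\partial \P_1 \cap \partial \P_2$ and has length $O(\delta)$, which is smaller than $\tfrac{\eps}{4}$ in each $\P_i$; the triangle inequality then gives \eqref{eq: diversi poligon}.

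The main obstacle will be making the local detour argument watertight: when $\gamma_{\bC\bD}$ only grazes $\tilde{\gamma}_{\bA\bB}$ near a modified vertex $\widetilde{\bW}_j$, the topology of the intersection is delicate and one has to verify both that the detour actually stays inside $\P_i$ and that its length is bounded by a constant times $\delta$ uniformly in $\bC, \bD$. This reduces to an elementary geometric estimate exploiting that $\widetilde{\bW}_j$ lies on the internal angle bisector at $\bW_j$, but extra care is needed when $\bC$ or $\bD$ happens to sit unusually close to $\tilde{\gamma}_{\bA\bB}$, where the separation between $\gamma_{\bC\bD}$ and $\tilde{\gamma}_{\bA\bB}$ itself is comparable to $\delta$.
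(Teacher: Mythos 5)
This lemma is quoted from \cite{PR2} and the present paper gives no proof of it, so you are effectively reconstructing the argument of Lemma~2.5 of \cite{PR2}. Your global ``stability under small perturbations of the dividing curve'' heuristic is the right intuition, and your treatment of \eqref{eq: stesso poligono} (build a competitor inside $\P_i$ by following $\gamma_{\bC\bD}$ and, where it strays across $\tilde\gamma_{\bA\bB}$, detouring along the $\P_i$-side of $\tilde\gamma_{\bA\bB}$) is in the spirit of what one has to do. Even there, though, the central quantitative claim is left as an assertion: you say each detour contributes $O(\delta)$, but what you really need is that the \emph{excess} length over the corresponding portion of $\gamma_{\bC\bD}$ is $O(\delta)$. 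That requires showing that any arc of $\gamma_{\bC\bD}$ that lands on the $\P_{3-i}$ side of $\tilde\gamma_{\bA\bB}$ actually stays $O(\delta)$-close to $\tilde\gamma_{\bA\bB}$ throughout --- otherwise the detour along $\tilde\gamma_{\bA\bB}$ can replace a long straight chord by a long boundary arc and the difference is not obviously $O(\delta)$. This is where Lemma~\ref{lemma: 2.4 Pratelli2} must be used quantitatively (the crossings are pinned to vertices $\bW_j$ or to $\bC,\bD$, and the possible overlap $\gamma_{\bC\bD}\cap\gamma_{\bA\bB}$ is a connected arc), and the case where $\gamma_{\bC\bD}$ shares an entire sub-segment with $\gamma_{\bA\bB}$ needs separate handling. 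You flagged some of this at the end, so this part is best described as incomplete rather than wrong.

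The argument for \eqref{eq: diversi poligon} has a genuine gap. You choose $\bF$ as a point of $\gamma_{\bC\bD}\cap\tilde\gamma_{\bA\bB}$ closest to $\bE$ and assert that ``the geometry forces $|\bE-\bF|=O(\delta)$.'' This does not follow from the hypotheses. The point $\bE$ is only assumed to lie within distance $\delta$ of the \emph{curve} $\gamma_{\bC\bD}$, not of any intersection point with $\tilde\gamma_{\bA\bB}$. If $\gamma_{\bC\bD}$ and $\tilde\gamma_{\bA\bB}$ cross at a small angle $\theta$ at $\bF$, then along $\tilde\gamma_{\bA\bB}$ a point at arclength $r$ from $\bF$ is still at distance only about $r\sin\theta$ from $\gamma_{\bC\bD}$, so $\bE$ can be admissible with $|\bE-\bF|$ of order $\delta/\sin\theta$; and $\theta$ depends on $\bC,\bD$, so this is not uniformly $O(\delta)$. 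Worse, $\gamma_{\bC\bD}\cap\tilde\gamma_{\bA\bB}$ need not be connected, and $\tilde\gamma_{\bA\bB}$ can re-approach $\gamma_{\bC\bD}$ far from any crossing. Finally, even granting $|\bE-\bF|=O(\delta)$, your last sentence upgrades a Euclidean bound to a bound on the \emph{arclength} of the sub-arc of $\tilde\gamma_{\bA\bB}$ between $\bF$ and $\bE$, which is again not automatic. The correct route is to split $\gamma_{\bC\bD}$ at a point $\bG\in\gamma_{\bC\bD}$ with $|\bE-\bG|\le\delta$ (not at an intersection with $\tilde\gamma_{\bA\bB}$), connect $\bG$ to $\bE$ by the short segment, and then repair each half of $\gamma_{\bC\bD}$ into $\P_1$ and $\P_2$ respectively with the detour construction of part~(1); one then has to check that these repaired halves land on the correct sides, which is again where the $\bar\delta(\P)$ threshold and the structure of the geodesic crossings enter. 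As written, your step from ``$\bE$ near $\gamma_{\bC\bD}$'' to ``$\bE$ near a crossing point $\bF$, and joined to it by an $O(\delta)$-arc of $\tilde\gamma_{\bA\bB}$'' is the missing idea.
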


\begin{definition}[Set of vertices of a geodesic curve]\label{def: definizione 2.6 Pratelli}
Let $\P \subset \R^2$ be a polygon.  For every $\bA, \bB \in \partial \P$ there is a unique ordered set $\mathcal{X}(\bA,\bB)= \{ \bX_1, \ldots, \bX_N  \}$ such that the geodesic $\gamma_{\bA\bB}$ is exactly the piecewise linear curve $\bA \bX_1 \ldots \bX_N \bB$, and the points $\bX_j$ are all the vertices of $\P$ met by the geodesic $\gamma_{\bA\bB}$ (except $\bA$ and $\bB$ themselves, in case they are already vertices). The set $\mathcal{X}(\bA,\bB)$ is called \emph{set of vertices} of $\gamma_{\bA\bB}$. 
\end{definition}

\begin{definition}[$\delta$-linearization of a Jordan curve]\label{def: 3.6 Pratelli3}
Let $\psi$ be a Jordan curve with finite length, and let $\delta >0$ be much smaller than the diameter of the bounded component of $\R^2 \setminus\psi$.  Let $\arc{\bA_1 \bB_1},  \arc{\bA_2 \bB_2}, \ldots,  \arc{\bA_N \bB_N}$ be finitely many essentially disjoint arcs contained in $\psi$.
Let then $\phi$ be the closed curve obtained by replacing each arc $\arc{\bA_i \bB_i}$ with the segment $\bA_i \bB_i$. We say that $\phi$ is a $\delta$-linearization of $\psi$ if 
\begin{itemize}
	\item $\phi$ is injective
	\item  every arc $\arc{\bA_i \bB_i}$ is such that $\mathcal{H}^1(\arc{\bA_i \bB_i}) < \delta$
	\item   $\arc{\bA_i \bB_i} \cap \phi \subset \bA_i \bB_i$.
\end{itemize}  
The $\delta$-linearization is said \emph{complete} if the union of the arcs $\arc{\bA_i \bB_i}$ is the whole curve $\psi$, hence $\phi$ is piecewise linear. 
\end{definition}

\begin{lemma}[\cite{PR2}, Corollary 4.3]\label{lemma: 3.7 Pratelli3}
Let $\Delta \subset \R^2$ be a convex polygon and let $\psi: \partial \Delta \to \R^2$ be a parametrized Jordan curve with finite length and $\phi: \partial \Delta \to \R^2$ be a $\delta$-linearization of $\psi$.
Then for every $P,Q \in \partial \Delta$ one has 
\[  \rho_{\phi(\partial \Delta)} \big( \phi(P), \phi(Q) \big) \leq \rho_{\psi(\partial \Delta)} \big( \psi(P), \psi(Q) \big) + 2\delta.  \]
In particular for every $\theta \in [0,2\pi)$ we deduce
\[ \Psi_\theta (\phi) \leq \Psi_\theta (\psi) + 2\delta \mathcal{H}^1(\partial \Delta).   \]
\end{lemma}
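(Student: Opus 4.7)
My plan is to transfer a geodesic realizing $\rho_{\psi(\partial\Delta)}(\psi(P),\psi(Q))$ into a competitor curve in $\overline{\P_\phi}$ joining $\phi(P)$ to $\phi(Q)$ with length increase at most $2\delta$, and then integrate the resulting pointwise bound. Here $\P_\psi,\P_\phi$ denote the bounded components of $\R^2\setminus\psi(\partial\Delta)$ and $\R^2\setminus\phi(\partial\Delta)$ respectively.

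First I would set up the geometry. For each replaced arc $\arc{\bA_i\bB_i}$, the arc and its chord $\bA_i\bB_i$ bound a Jordan region $L_i$ (a \emph{lens}); this is well defined by the third bullet of Definition~\ref{def: 3.6 Pratelli3}, which guarantees that the arc and the chord meet only at $\bA_i$ and $\bB_i$. Since the arc has length less than $\delta$ and the chord is no longer, each $L_i$ has perimeter strictly less than $2\delta$. The symmetric difference $\P_\psi\triangle\P_\phi$ is the disjoint union of the $L_i$'s, each one either \emph{removed} from $\P_\psi$ (if $L_i\subset\P_\psi$) or \emph{added} to it (otherwise).

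Next I would establish the pointwise inequality. Let $\gamma$ be the geodesic in $\overline{\P_\psi}$ from $\psi(P)$ to $\psi(Q)$ and build a competitor $\tilde\gamma\subset\overline{\P_\phi}$ from $\phi(P)$ to $\phi(Q)$ as follows. Added lenses lie inside $\P_\phi$ and need no treatment. For each removed lens $L_i$ visited by $\gamma$: the arc of $L_i$ is part of $\partial\P_\psi$, so $\gamma$ can enter and leave $L_i$ only through its chord, with the single possible exception of the endpoints $\psi(P)$ or $\psi(Q)$ if they happen to sit on such an arc. Each interior excursion of $\gamma$ through $L_i$ (entry and exit both on the chord) is replaced by the chord segment between entry and exit, which is no longer than the original excursion. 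Each of the at most two excursions containing an endpoint of $\gamma$ is replaced by the chord segment from $\phi(P)$ (respectively $\phi(Q)$) to the chord-exit point; its length is bounded by the chord of $L_i$ and hence by $\delta$. Summing gives
\[\rho_{\phi(\partial\Delta)}\bigl(\phi(P),\phi(Q)\bigr)\leq|\tilde\gamma|\leq|\gamma|+2\delta=\rho_{\psi(\partial\Delta)}\bigl(\psi(P),\psi(Q)\bigr)+2\delta.\]

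Finally, integrating this pointwise estimate over the intervals $(a^-,a^+)$ and $(b^-,b^+)$ appearing in the definition of $\Psi_\theta$ yields
\[\Psi_\theta(\phi)\leq\Psi_\theta(\psi)+2\delta\bigl((a^+-a^-)+(b^+-b^-)\bigr),\]
and the stated bound follows because the projection of $\partial\Delta$ onto any line covers the corresponding width at least twice, so any width of the convex polygon $\Delta$ is at most $\frac{1}{2}\mathcal{H}^1(\partial\Delta)$ and hence the sum of two perpendicular widths is at most $\mathcal{H}^1(\partial\Delta)$. The hard part will be the endpoint bookkeeping: when $\psi(P)$ lies on the arc of a removed lens, $\phi(P)$ sits on the chord and is not reachable from $\psi(P)$ by any short path within $\overline{\P_\phi}$, so one must truncate the initial excursion of $\gamma$ and launch $\tilde\gamma$ directly at $\phi(P)$; the bound ``chord length $<\delta$'' is exactly what delivers the clean constant $2\delta$ in the statement.
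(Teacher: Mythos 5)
The paper states Lemma~\ref{lemma: 3.7 Pratelli3} without proof, citing [PR2, Corollary 4.3], so there is no in-paper argument to compare against. Your proof is a valid, self-contained argument for the cited fact and follows the natural route: a lens decomposition of $\P_\psi \triangle \P_\phi$, a surgery on the geodesic that replaces each excursion through a removed lens by the corresponding chord segment, and an integration of the resulting pointwise bound. The key observations are correct: an interior excursion (entry and exit both on the chord) is shortened by the replacement, only the at most two excursions anchored at $\psi(P)$ and $\psi(Q)$ can cost, and each costs at most the chord length $<\delta$. The final width-to-perimeter estimate giving $(a^+-a^-)+(b^+-b^-)\leq \mathcal{H}^1(\partial\Delta)$ is also sound.

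One point deserves care: you claim the third bullet of Definition~\ref{def: 3.6 Pratelli3}, namely $\arc{\bA_i\bB_i}\cap\phi\subset\bA_i\bB_i$, guarantees that the arc and its chord meet only at $\bA_i,\bB_i$, making $L_i$ a Jordan region. Read literally it does not: the condition forbids the arc from meeting \emph{other} chords or the untouched parts of the curve, but it permits the arc to touch or cross its own chord $\bA_i\bB_i$ at interior points, since those points lie in $\phi$ and in $\bA_i\bB_i$. If such crossings occur, the set bounded by arc and chord decomposes into finitely many Jordan sub-lenses lying alternately on the two sides of the chord, some added to and some removed from $\P_\psi$. Your surgery survives this generalization without change to the constants: each removed sub-lens has a sub-arc of length $<\delta$ and a sub-chord of length $<\delta$, interior excursions through a sub-lens are still shortened, and the at most two endpoint excursions still cost at most one sub-chord length $<\delta$ apiece. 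So the proof is complete once this case is acknowledged rather than excluded by appeal to Definition~\ref{def: 3.6 Pratelli3}.
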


We conclude this subsection recalling two extension results that will be useful in the sequel. The next Proposition \ref{prop: straight minimal extension} is proved in \cite{PR2} Theorem A,
and Corollary \ref{coroll: non optimal extension} is a straightforward consequence of Proposition \ref{prop: straight minimal extension}. 

\begin{prop}[Minimal extension for standard Manhattan norm]\label{prop: straight minimal extension}
Let $\cR \subset \R^2$ be a rectangle of the form $[a^-,a^+] \times [b^-,b^+]$ and let $\phi: \partial \cR \to \R^2$ be a continuous injective map.  Then for every $\eps > 0 $ there exists a piecewise affine homeomorphism $v : \cR \to \R^2$ coinciding with $\phi$ on $\partial \cR$ such that
\begin{equation}\label{eq:straight minimal extension}
\| Dv\|_0 (\cR) \leq \Psi_0 (\phi) + \eps. 
\end{equation} 
Moreover, if $\phi$ is piecewise linear then the map $v$ can be chosen finitely piecewise affine.
\end{prop}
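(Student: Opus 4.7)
The plan is to reduce to a piecewise linear boundary datum and then build $v$ explicitly via a fine grid of modified geodesics. First, if $\phi$ is merely continuous and injective, I would invoke Lemma~\ref{lemma: 3.7 Pratelli3} to replace it by a complete $\delta$-linearization $\phi_\delta$ which is piecewise linear on $\partial\cR$ and satisfies $\Psi_0(\phi_\delta) \leq \Psi_0(\phi) + 2\delta\mathcal{H}^1(\partial\cR)$, and then interpolate between $\phi$ and $\phi_\delta$ inside a thin collar of $\partial\cR$ whose contribution to $\|Dv\|_0$ is of order $\delta$ times the perimeter. So we may henceforth assume $\phi$ is piecewise linear at the cost of an error $\eps/3$; this also settles the \emph{moreover} clause, since the construction below yields a finitely piecewise affine map whenever $\phi$ is piecewise linear.

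With $\phi$ piecewise linear, let $\P$ denote the bounded component of $\R^2\setminus\phi(\partial\cR)$ and, for every $s\in(a^-,a^+)$ and $t\in(b^-,b^+)$, let $\gamma^v_s$ and $\gamma^h_t$ be the geodesics in $\overline{\P}$ joining $\phi(s,b^-)$ to $\phi(s,b^+)$ and $\phi(a^-,t)$ to $\phi(a^+,t)$ respectively. Since $\phi$ is piecewise linear, the ordered set of vertices threaded by $\gamma^v_s$ in the sense of Definition~\ref{def: definizione 2.6 Pratelli} is locally constant in $s$ and changes only at finitely many critical values; the same is true for $t$. I would then pick partitions $a^-=s_0<\cdots<s_M=a^+$ and $b^-=t_0<\cdots<t_N=b^+$ containing all such critical values and fine enough that the Riemann sums of $s\mapsto\rho_\P(\phi(s,b^-),\phi(s,b^+))$ and $t\mapsto\rho_\P(\phi(a^-,t),\phi(a^+,t))$ on these partitions approximate the corresponding integrals in $\Psi_0(\phi)$ with error below $\eps/6$ each.

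Next, fix $\delta<\bar\delta(\P)$ very small and replace each $\gamma^v_{s_i}$ and $\gamma^h_{t_j}$ by a $\delta$-modification $\tilde\gamma^v_{s_i}$, $\tilde\gamma^h_{t_j}$. Using Lemma~\ref{lemma: 2.4 Pratelli2} iteratively, I would choose the bisector points $\widetilde{\bW}$ so that any two vertical (resp.\ horizontal) modifications are disjoint in $\ins\P$ and each $\tilde\gamma^v_{s_i}$ meets each $\tilde\gamma^h_{t_j}$ in a single transverse point $\bX_{ij}$. The resulting family of curves partitions $\P$ into $MN$ topological quadrilaterals $\P_{ij}$. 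Define $v$ on each grid node $(s_i,t_j)\in\cR$ to be $\bX_{ij}$, and extend piecewise affinely on each axis-aligned cell $[s_i,s_{i+1}]\times[t_j,t_{j+1}]$ to a homeomorphism onto $\P_{ij}$ via a standard triangulation. The non-crossing property of the modified geodesics guarantees that these cell maps glue into a global homeomorphism $v:\cR\to\overline{\P}$ which agrees with $\phi$ on $\partial\cR$. For the estimate, observe that for $t\in(t_j,t_{j+1})$ the horizontal section $s\mapsto v(s,t)$ is a piecewise linear curve supported near $\tilde\gamma^h_{t_j}$, so Lemma~\ref{lemma: 2.5 Pratelli2} bounds its length by $\rho_\P(\phi(a^-,t_j),\phi(a^+,t_j))+C\delta$. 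Fubini then yields
\[
|\langle Dv,e_0\rangle|(\cR)\leq \sum_{j=0}^{N-1}(t_{j+1}-t_j)\bigl(\rho_\P(\phi(a^-,t_j),\phi(a^+,t_j))+C\delta\bigr),
\]
and analogously for $|\langle Dv,e_0^{\bot}\rangle|(\cR)$. Adding these two estimates and choosing the mesh and $\delta$ small finishes the proof.

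The main obstacle is the coordinated construction in the third paragraph: producing $\delta$-modified vertical and horizontal families such that each horizontal curve meets each vertical one transversely in exactly one point, no two members of the same family cross in $\ins\P$, and the combinatorial cells $\P_{ij}$ are genuine topological quadrilaterals. This rests on iterative use of Lemma~\ref{lemma: 2.4 Pratelli2} to track geodesic intersections while perturbing one modification at a time, together with Lemma~\ref{lemma: 2.5 Pratelli2} to keep the intersections with $\partial\P$ under control. Once this topological grid is in place, the assembly of $v$ and its Manhattan-norm estimate via Fubini are routine.
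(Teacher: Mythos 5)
First, a point of comparison: the paper does not prove this proposition at all --- it is imported verbatim as Theorem~A of \cite{PR2}, and the internal construction actually invoked later (see the proof of Theorem~\ref{thm:What}) is Lemma~2.12 of that reference. So there is no in-paper argument for you to match; what you have written is a reconstruction of the \cite{PR2} strategy (grid of modified geodesics, cells mapped affinely, Fubini for the Manhattan norm). The outline is the right one, but the two places you label ``routine'' are exactly where the content of \cite{PR2} lives, and one of them contains an incorrect inference. The key problem is the derivation of $|\langle Dv,e_0\rangle|(\cR)\leq \sum_j (t_{j+1}-t_j)\bigl(\rho_{\P}(\phi(a^-,t_j),\phi(a^+,t_j))+C\delta\bigr)$ from the observation that the horizontal section $s\mapsto v(s,t)$ is ``supported near $\tilde\gamma^h_{t_j}$''. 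Proximity to a curve gives no control whatsoever on length: a piecewise affine homeomorphism of the cell $[s_i,s_{i+1}]\times[t_j,t_{j+1}]$ onto $\P_{ij}$ obtained from ``a standard triangulation'' can send horizontal segments to piecewise linear arcs of arbitrarily large total length even though $\P_{ij}$ is tiny. (Note also that the vertical edges of a cell must be sent onto arcs of $\tilde\gamma^v_{s_i}$, which are piecewise linear with several vertices, so a two-triangle triangulation cannot even match the prescribed boundary data.) To make Fubini work one must \emph{build} the strip map so that each section $v(\cdot,t)$, $t\in(t_j,t_{j+1})$, is a monotone interpolation between the two bounding modified geodesics, whose lengths are controlled by Lemma~\ref{lemma: 2.5 Pratelli2}; this explicit interpolation, together with the bookkeeping that keeps it injective across adjacent strips, is precisely Lemma~2.12 of \cite{PR2} and is the heart of the proof, not a routine assembly step.

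The reduction in your first paragraph also has a gap in the generality claimed by the statement. Lemma~\ref{lemma: 3.7 Pratelli3} and the notion of $\delta$-linearization require the Jordan curve $\phi(\partial\cR)$ to have \emph{finite length}, which a merely continuous injective $\phi$ need not provide (and finiteness of $\Psi_0(\phi)$ does not imply it). Moreover, the claim that interpolating between $\phi$ and $\phi_\delta$ in a thin collar costs only $O(\delta)\,\mathcal{H}^1(\partial\cR)$ in $\|Dv\|_0$ is unsubstantiated: the cost of such an interpolation is governed by the oscillation of $\phi$ and the length of the image arcs being straightened, not by the width of the collar. In \cite{PR2} the general continuous case is instead handled by exhausting $\cR$ with countably many strips accumulating at $\partial\cR$, which is why the resulting $v$ is only locally finitely piecewise affine in that case. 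Since the present paper only ever applies the proposition with piecewise linear $\phi$, this second gap is less damaging, but as a proof of the statement as written it is missing.
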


\begin{thm}[Minimal extension for standard Manhattan norm]\label{thm:What}
	Let $\epsilon > 0$ and let $v$ be the mapping from Proposition~\ref{prop: straight minimal extension} then
	\begin{equation}\label{eq:Component Straight minimal extension}
	\begin{aligned}
	|D_1v|(\Q) &\leq\int_{b^-}^{b^+} \rho_\P (\phi(H_t^1), \phi(H_t^2)) dt + \epsilon \text{ and }\\
	|D_2v|(\Q) &\leq  \int_{a^-}^{a^+} \rho_\P (\phi(V_s^1), \phi(V_s^2)) ds + \epsilon . 
	\end{aligned}
	\end{equation}
\end{thm}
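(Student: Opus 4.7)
The plan is to combine the total-norm bound of Proposition~\ref{prop: straight minimal extension} with the classical one-dimensional BV slicing theorem. The point is that the quantity $\Psi_0(\phi)$ appearing in Proposition~\ref{prop: straight minimal extension} is the sum of two terms, and each of those terms is, via slicing, a lower bound for one of the two components $|D_1 v|(\cR)$ and $|D_2 v|(\cR)$. Consequently each component is pinned between its slicing lower bound and that same quantity plus $\epsilon$, and the claimed splitting of the estimate is forced.

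The extension $v$ produced by Proposition~\ref{prop: straight minimal extension} is piecewise affine, hence continuous on $\overline{\cR}$, and maps $\overline{\cR}$ homeomorphically onto $\overline{\P}$ while coinciding with $\phi$ on $\partial \cR$. Applying the one-dimensional BV slicing theorem to $v$ in the direction $e_\alpha^\perp$ (we are in the setting $\alpha=0$): for a.e.\ $t \in (b^-, b^+)$ the slice $r \mapsto v(te_\alpha + re_\alpha^\perp)$ is a continuous curve in $\overline{\P}$ connecting $\phi(H_t^1)$ to $\phi(H_t^2)$, whose one-dimensional total variation equals its Euclidean length. That length is bounded below by $\rho_\P(\phi(H_t^1), \phi(H_t^2))$ by definition of the geodesic distance. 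Integrating in $t$ gives
$$|D_1 v|(\cR) \geq \int_{b^-}^{b^+} \rho_\P(\phi(H_t^1), \phi(H_t^2))\,dt =: I_H,$$
and the symmetric slicing in the orthogonal direction yields $|D_2 v|(\cR) \geq I_V$ with $I_V := \int_{a^-}^{a^+} \rho_\P(\phi(V_s^1), \phi(V_s^2))\,ds$.

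Proposition~\ref{prop: straight minimal extension} then provides the matching upper bound on the sum,
$$\|Dv\|_0(\cR) = |D_1 v|(\cR) + |D_2 v|(\cR) \leq \Psi_0(\phi) + \epsilon = I_H + I_V + \epsilon.$$
Subtracting the lower bound on $|D_2 v|(\cR)$ from this inequality produces $|D_1 v|(\cR) \leq I_H + \epsilon$, while subtracting the lower bound on $|D_1 v|(\cR)$ produces $|D_2 v|(\cR) \leq I_V + \epsilon$; these are precisely the two estimates of \eqref{eq:Component Straight minimal extension}.

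No serious obstacle arises: the only nontrivial ingredient is the slicing lower bound, which rests solely on the continuity of $v$ and on the fact that $v(\overline{\cR}) = \overline{\P}$, both guaranteed by Proposition~\ref{prop: straight minimal extension}. In particular, as the authors remark just before the statement, the conclusion is already implicit in the proof of Proposition~\ref{prop: straight minimal extension} itself, and the present theorem is essentially a bookkeeping observation about that proof combined with the elementary slicing lower bound.
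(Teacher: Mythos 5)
Your proof is correct, and it takes a genuinely different (and more self-contained) route than the paper's. The paper's proof is essentially a citation: it points to the explicit construction in Lemma~2.12 of \cite{PR2} and extracts the last two displayed estimates on page 543, which literally are the two bounds in \eqref{eq:Component Straight minimal extension}. You instead observe that the quantity $\Psi_0(\phi)$ decomposes as $I_H + I_V$, derive the slicing lower bounds $|D_1 v|(\cR) \geq I_H$ and $|D_2 v|(\cR) \geq I_V$ (for $v$ piecewise affine this is the elementary fact that the length of the horizontal slice $s \mapsto v(s,t)$ dominates $\rho_\P(\phi(H_t^1),\phi(H_t^2))$, and Fubini), and then subtract from the total bound $|D_1 v| + |D_2 v| \leq I_H + I_V + \eps$ of Proposition~\ref{prop: straight minimal extension}. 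This is a clean ``sandwich'' argument. What your route buys: it requires only the stated conclusion of Proposition~\ref{prop: straight minimal extension} rather than inspection of the internals of the \cite{PR2} construction, so it would apply verbatim to \emph{any} competitor $v$ achieving the total-norm bound, not only the one built in \cite{PR2}; in particular it also yields, as a free by-product, exactly the lower-bound remark the paper makes just after Theorem~\ref{Componentwise ExtensionAlt}. What the paper's route buys: it makes literal the claim that ``this fact is actually a direct consequence of the proofs in \cite{PR2}'', since it is a quotation rather than a derivation. Both are valid; yours is arguably preferable for a self-contained exposition.
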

\begin{proof}
	The finitely piecewise affine homeomorphisms from a rectangle to a polygon in \cite{PR2} used in the proof of Proposition~\ref{prop: straight minimal extension} are constructed in Lemma~2.12. The key estimates we need to extract are the last two unnumbered equations of the proof, found on page 543. They say exactly that
	$$
	\begin{aligned}
	|D_1v|(\Q) &\leq\int_{b^-}^{b^+} \rho_\P (\phi(H_t^1), \phi(H_t^2)) dt+ \epsilon \text{ and }\\
	|D_2v|(\Q) &\leq  \int_{a^-}^{a^+} \rho_\P (\phi(V_s^1), \phi(V_s^2)) ds + \epsilon . 
	\end{aligned}
	$$
\end{proof}

\begin{corollary}[$W^{1,1}$ extension with non optimal bound]\label{coroll: non optimal extension}
There exists a $\tilde{C}>0$ such that the following holds. Let $\cR \subset \R^2$ be a rectangle and let $\phi: \partial \cR \to \R^2$ be a continuous, piecewise linear and injective map.  Then there exists a finitely piecewise affine homeomorphism $v : \cR \to \R^2$ extending $\phi$ such that 
\begin{equation}
 \| Dv  \|_{L^1(\cR)} \leq \tilde{C} \mathcal{H}^1(\partial \cR) \mathcal{H}^1(\phi(\partial \cR))
\end{equation}
\end{corollary}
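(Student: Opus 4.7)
The plan is to deduce this as a direct corollary of Proposition~\ref{prop: straight minimal extension} by replacing the geodesic-optimal quantity $\Psi_0(\phi)$ with a crude universal bound depending only on the two perimeters. First I would invoke Proposition~\ref{prop: straight minimal extension} with some parameter $\eps>0$, to be fixed at the end, producing a finitely piecewise affine homeomorphism $v:\cR\to\R^2$ extending $\phi$ and satisfying $\|Dv\|_0(\cR)\le \Psi_0(\phi)+\eps$.

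Next I would estimate $\Psi_0(\phi)$ brutally. Write $\P$ for the bounded component of $\R^2\setminus\phi(\partial\cR)$, whose boundary is the Jordan curve $\phi(\partial\cR)$. For any two points $\bA,\bB\in\partial\P$, the geodesic inside $\overline\P$ cannot exceed the length of the shorter boundary arc between them, so $\rho_\P(\bA,\bB)\le \mathcal{H}^1(\phi(\partial\cR))$. Inserting this into the definition of $\Psi_0(\phi)$, and using the notation $h=b^+-b^-$, $\ell=a^+-a^-$ together with $\mathcal{H}^1(\partial\cR)=2(h+\ell)$, I obtain
\[
\Psi_0(\phi)\le (h+\ell)\,\mathcal{H}^1(\phi(\partial\cR))\le \mathcal{H}^1(\partial\cR)\,\mathcal{H}^1(\phi(\partial\cR)).
\]

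The last step is to pass from the Manhattan norm $\|\cdot\|_0$ to the standard $L^1$-norm of the matrix field $Dv$. Since $v$ is finitely piecewise affine, $Dv$ is a bounded measurable $2\times 2$ matrix field, and pointwise almost everywhere its Frobenius norm satisfies $|Dv|=\sqrt{|\partial_1 v|^2+|\partial_2 v|^2}\le |\partial_1 v|+|\partial_2 v|$, so $\|Dv\|_{L^1(\cR)}\le \|Dv\|_0(\cR)$. Combining the three steps and choosing, for example, $\eps:=\mathcal{H}^1(\partial\cR)\,\mathcal{H}^1(\phi(\partial\cR))$ absorbs the error and gives the claim with $\tilde C=2$. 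There is no real obstacle: Proposition~\ref{prop: straight minimal extension} carries the whole weight of the argument, and the corollary merely records the perimeter-type bound one gets after discarding the geodesic refinement.
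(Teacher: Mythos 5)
Your proof is correct and matches the paper's argument essentially step for step: apply Proposition~\ref{prop: straight minimal extension}, bound $\Psi_0(\phi)$ by $\mathcal{H}^1(\partial\cR)\,\mathcal{H}^1(\phi(\partial\cR))$ using the fact that geodesic distances in $\overline{\P}$ are dominated by the boundary length, compare $\|Dv\|_{L^1}$ with $\|Dv\|_0$, and choose $\eps=\mathcal{H}^1(\partial\cR)\,\mathcal{H}^1(\phi(\partial\cR))$. The paper is slightly less explicit about the geodesic bound and writes a generic $\tilde{C}$ for the norm comparison, but this is the same proof.
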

\begin{proof}
The conclusion follows by applying Proposition \ref{prop: straight minimal extension} with $\eps =  \mathcal{H}^1(\partial \cR) \mathcal{H}^1(\phi(\partial \cR))$ and by observing that 
\[   \| Dv  \|_{L^1(\cR)} \leq \tilde{C}\big(|D_1 v|(\cR) +  |D_2 v|(\cR)\big) =  \tilde{C}\| Dv\|_0 (\cR)  \]
and 
\begin{align*}
\Psi_0 (\phi) &= \int_{a^-}^{a^+} \rho_{\phi(\partial cR)} (\phi(t,b^-), \phi(t,b^+)) dt + \int_{b^-}^{b^+} \rho_{\phi(\partial cR)} (\phi(a^-,t), \phi(a^+,t)) dt \\
&\leq \big(a^+ - a^- + b^+ - b^-   \big) \mathcal{H}^1 (\phi(\partial \cR)) \\
&\leq \mathcal{H}^1(\partial \cR) \mathcal{H}^1(\phi(\partial \cR)).
\end{align*}
\end{proof}

\section{Extension on a one-dimensional skeleton}
\begin{remark}\label{rmk: class I) and II)}
	If $\Q$ coincides with the rectangle $\cR_\Q$ then the conclusion of Theorem \ref{thm: rotated minimal extension} follows directly by \cite[Theorem A]{PR2}.  So,  without loss of generality,  we can assume that $\Q \neq \cR_\Q$. Then there are three cases left to consider. Either
	\begin{itemize}
		\item[$i)$] $\Q$ has two parallel sides parallel to $\alpha$
		\item[$ii)$] $\Q$ has exactly one side parallel to $\alpha$
		\item[$iii)$] $\Q$ has no side parallel to $\alpha$.
	\end{itemize}
\end{remark}

In this subsection we introduce a suitable partition of $\Q$, whose boundary will be referred to as one-dimensional \emph{skeleton}, and we define a continuous, injective and piecewise linear extension of $\phi$ on the skeleton. This procedure will be done in two different steps, which eventually correspond to the following technical lemmas. 

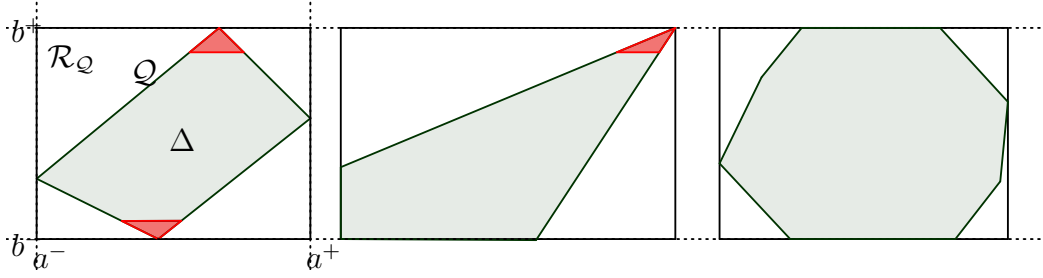
\begin{figure}
	\begin{tikzpicture}[line cap=round,line join=round,>=triangle 45,x=0.4cm,y=0.4cm]
	\clip(1.,-1.) rectangle (35.,8.);
	\fill[line width=0.7pt,color=qqttqq,fill=qqttqq,fill opacity=0.10000000149011612] (2.,2.) -- (8.,7.) -- (11.,4.) -- (6.,0.) -- cycle;
	\fill[line width=0.7pt,color=qqttqq,fill=qqttqq,fill opacity=0.10000000149011612] (12.,0.) -- (12.,2.3835369673354077) -- (23.,7.) -- (18.420510218271076,-0.034892494895193644) -- cycle;
	\fill[line width=0.7pt,color=ffqqqq,fill=ffqqqq,fill opacity=0.5] (4.8,0.6) -- (6.,0.) -- (6.757258216070825,0.6058065728566602) -- cycle;
	\fill[line width=0.7pt,color=ffqqqq,fill=ffqqqq,fill opacity=0.5] (22.473898109620652,6.191816028295064) -- (21.074278748682115,6.191816275643125) -- (23.,7.) -- cycle;
	\fill[line width=0.7pt,color=qqttqq,fill=qqttqq,fill opacity=0.10000000149011612] (27.153829703201264,7.) -- (25.835356906871976,5.364347226300464) -- (24.454099691669867,2.5076561675869726) -- (26.777123189964325,0.) -- (32.20797542246353,0.) -- (33.68340926597487,1.9112041882951458) -- (33.93454694146617,4.548149780953753) -- (31.705700071480944,7.) -- cycle;
	\fill[line width=0.7pt,color=ffqqqq,fill=ffqqqq,fill opacity=0.5] (8.,7.) -- (7.030182509107209,6.191818757589341) -- (8.80818155662796,6.19181844337204) -- cycle;
	\draw [line width=0.7pt,color=qqttqq] (2.,2.)-- (8.,7.);
	\draw [line width=0.7pt,color=qqttqq] (8.,7.)-- (11.,4.);
	\draw [line width=0.7pt,color=qqttqq] (11.,4.)-- (6.,0.);
	\draw [line width=0.7pt,color=qqttqq] (6.,0.)-- (2.,2.);
	\draw [line width=0.7pt] (2.,0.)-- (2.,7.);
	\draw [line width=0.7pt] (2.,7.)-- (11.,7.);
	\draw [line width=0.7pt] (11.,7.)-- (11.,0.);
	\draw [line width=0.7pt] (11.,0.)-- (2.,0.);
	\draw [line width=0.7pt] (4.8,0.6)-- (6.757258216070825,0.6058065728566602);

	\draw [line width=0.7pt] (12.,0.)-- (12.,7.);
	\draw [line width=0.7pt] (12.,7.)-- (23.,7.);
	\draw [line width=0.7pt] (23.,7.)-- (23.,0.);
	\draw [line width=0.7pt] (23.,0.)-- (12.,0.);
	\draw [line width=0.7pt,color=qqttqq] (12.,0.)-- (12.,2.3835369673354077);
	\draw [line width=0.7pt,color=qqttqq] (12.,2.3835369673354077)-- (23.,7.);
	\draw [line width=0.7pt,color=qqttqq] (23.,7.)-- (18.420510218271076,-0.034892494895193644);
	\draw [line width=0.7pt,color=qqttqq] (18.420510218271076,-0.034892494895193644)-- (12.,0.);
	\draw [line width=0.7pt,color=ffqqqq] (4.8,0.6)-- (6.,0.);
	\draw [line width=0.7pt,color=ffqqqq] (6.,0.)-- (6.757258216070825,0.6058065728566602);
	\draw [line width=0.7pt,color=ffqqqq] (6.757258216070825,0.6058065728566602)-- (4.8,0.6);

\draw (1.5,0.2) node[anchor=north west] {$a^-$};
\draw (10.5,0.2) node[anchor=north west] {$a^+$};
\draw (0.8,0.6) node[anchor=north west] {$b^-$};
\draw (0.8,7.7) node[anchor=north west] {$b^+$};
\draw (4.737737588284016,6.3) node[anchor=north west] {$\Q$};
\draw (2,6.8) node[anchor=north west] {$\cR_{\Q}$};
\draw (6,4) node[anchor=north west] {$\Delta$};

	\draw [line width=0.7pt,dotted] (2.,-1.) -- (2.,8.);
	\draw [line width=0.7pt,dotted] (11.,-1.) -- (11.,8.);
	\draw [line width=0.7pt,dotted,domain=1.:35.] plot(\x,{(--21.-0.*\x)/3.});
	\draw [line width=0.7pt,dotted,domain=1.:35.] plot(\x,{(-0.-0.*\x)/9.});
	\draw [line width=0.7pt,color=ffqqqq] (22.473898109620652,6.191816028295064)-- (21.074278748682115,6.191816275643125);
	\draw [line width=0.7pt,color=ffqqqq] (21.074278748682115,6.191816275643125)-- (23.,7.);
	\draw [line width=0.7pt,color=ffqqqq] (23.,7.)-- (22.473898109620652,6.191816028295064);
	\draw [line width=0.7pt] (24.454099691669867,7.)-- (24.454099691669867,0.);
	\draw [line width=0.7pt] (24.454099691669867,0.)-- (33.93454694146617,0.);
	\draw [line width=0.7pt] (33.93454694146617,0.)-- (33.93454694146617,7.);
	\draw [line width=0.7pt] (33.93454694146617,7.)-- (24.454099691669867,7.);
	\draw [line width=0.7pt,color=qqttqq] (27.153829703201264,7.)-- (25.835356906871976,5.364347226300464);
	\draw [line width=0.7pt,color=qqttqq] (25.835356906871976,5.364347226300464)-- (24.454099691669867,2.5076561675869726);
	\draw [line width=0.7pt,color=qqttqq] (24.454099691669867,2.5076561675869726)-- (26.777123189964325,0.);
	\draw [line width=0.7pt,color=qqttqq] (26.777123189964325,0.)-- (32.20797542246353,0.);
	\draw [line width=0.7pt,color=qqttqq] (32.20797542246353,0.)-- (33.68340926597487,1.9112041882951458);
	\draw [line width=0.7pt,color=qqttqq] (33.68340926597487,1.9112041882951458)-- (33.93454694146617,4.548149780953753);
	\draw [line width=0.7pt,color=qqttqq] (33.93454694146617,4.548149780953753)-- (31.705700071480944,7.);
	\draw [line width=0.7pt,color=qqttqq] (31.705700071480944,7.)-- (27.153829703201264,7.);
	\draw [line width=0.7pt,color=ffqqqq] (8.,7.)-- (7.030182509107209,6.191818757589341);
	\draw [line width=0.7pt,color=ffqqqq] (7.030182509107209,6.191818757589341)-- (8.80818155662796,6.19181844337204);
	\draw [line width=0.7pt,color=ffqqqq] (8.80818155662796,6.19181844337204)-- (8.,7.);
	\end{tikzpicture}
	\caption{The figure shows the set-up in the case that $\alpha = 0$. We have the polygon $\Q$ constituted of the green set $\Delta$ and the removed red triangle(s). In the case where the polygon $\Q$ has no sides parallel to $\alpha$ (corresponding to class $iii$ from Remark~\ref{rmk: class I) and II)} pictured on the left) we generate a polygon with two sides, both parallel to $\alpha$ by removing 2 triangles. If there is already one side parallel to some $\alpha$ (as pictured in the middle) then it is enough to remove one triangle and the remaining set has two sides both parallel to $\alpha$; this corresponds to class $ii$ from Remark~\ref{rmk: class I) and II)}. In the scenario on the right $\Q$ already has two sides parallel to $\alpha$ (corresponding to class $i$ from Remark~\ref{rmk: class I) and II)}), it is not necessary to remove any triangles.}
	\label{Fig:Setup}
\end{figure}

\begin{lemma}[Skeleton- triangles]\label{lemma: skeleton triangles}
Let $\alpha \in [0,2\pi)$ be fixed, $\Q \subset \R^2$ be a convex polygon and let $\phi: \partial \Q \to \R^2$ be a continuous, piecewise linear, injective map.  

Then for every $\eps >0$ one of the following holds:

\begin{itemize}
\item the set $\Q$ is of class $ii)$ from Remark~\ref{rmk: class I) and II)} and there exists a triangle $T$ satisfying
\begin{itemize}
	\item[a)] two sides of $T$ are contained in two sides of $\Q$, so $T$ and $\Q$ share the vertex $W$ and the side of $T$ inside $\Q$ is parallel to $\alpha$ (we refer to the point in the intersection of the third side of $T$ and the bisector of the vertex at $W$ as $W^*$),
	\item[b)] $\mathcal{H}^1(\partial T)  < \eps$
	\item[c)] $\Q = T \cup \Delta$, where $\partial\Delta$ is a convex polygon with two sides parallel to $\alpha$ one of which lies in $\partial \Q$ and the other one is the common side of $\partial T$ and $\partial\Delta$
	\item[d)] $\phi$ is linear on each side of $\partial T \cap \partial \Q$
	\item[e)] there exists a $\bar{\phi} : \partial T \cup \partial \Delta \to \R^2$ a continuous piecewise linear injective map such that $\bar{\phi} = \phi$ on $\partial\Q$, $\bar{\phi}$ is exactly bi-linear 
	on $\partial T \cap \partial \Delta$ and the singular point is precisely $W^*$
	\item[f)] the estimates
	$$
	\begin{aligned}
	\mathcal{H}^1(\bar{\phi}(\partial T)) &\leq \eps, \\
	\Psi_\alpha(\bar{\phi}_{\rceil \partial T}) + \Psi_\alpha(\bar{\phi}_{\rceil \partial \Delta})  &< \Psi_\alpha(\phi) + \eps
	\end{aligned}
	$$
	hold.
\end{itemize} 

\item the set $\Q$ is of class $iii)$ from Remark~\ref{rmk: class I) and II)} and there exists a pair of disjoint triangles $T_1, T_2$ satisfying
\begin{itemize}
	\item[a)] $T_1,T_2$ each contain a vertex of $\Q$ which we call $W_1$ resp. $W_2$, 
	\item[b)] It holds that
	\begin{equation}\label{SmallTry}
		\mathcal{H}^1(\partial T_1), \mathcal{H}^1(\partial T_2)  < \eps
	\end{equation}
	\item[c)] $\Q = T_1 \cup T_2 \cup \Delta$, where $\partial\Delta$ is a convex polygon with two sides parallel to $\alpha$ one of which is the common side of $\partial T_1$ and $\partial\Delta$ and the other is the common side of $\partial T_2$ and $\partial\Delta$
	\item[d)] $\phi$ is linear on each side of $\partial T_1 \cap \partial \Q$ and $\partial T_2 \cap \partial \Q$
	\item[e)] there exists a $\bar{\phi} : \partial T_1 \cup \partial T_2 \cup \partial \Delta \to \R^2$ a continuous piecewise linear injective map such that $\bar{\phi} = \phi$ on $\partial\Q$, $\bar{\phi}$ is exactly bi-linear 
	on $\partial T_2 \cap \partial \Delta$ and on $\partial T_1 \cap \partial \Delta$ and the singular points are precisely $W^*_1, W^*_2$
	\item[f)] the estimates
	\begin{equation}\label{SmallImageTry}
		\mathcal{H}^1(\bar{\phi}(\partial T_1)) +\mathcal{H}^1(\bar{\phi}(\partial T_2))\leq \eps
	\end{equation}
	and
	\begin{equation}\label{GeoTry}
		\Psi_\alpha(\bar{\phi}_{\rceil \partial T_1}) + \Psi_\alpha(\bar{\phi}_{\rceil \partial \Delta}) +\Psi_\alpha(\bar{\phi}_{\rceil \partial T_2}) < \Psi_\alpha(\phi) + \eps
	\end{equation}
	hold.
\end{itemize} 
\end{itemize}
\end{lemma}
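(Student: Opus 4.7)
The two cases $ii)$ and $iii)$ are handled by the same local construction at each vertex $W$ of $\Q$ that realises an extreme of $\langle\cdot,e_\alpha^\perp\rangle$ and does \emph{not} already lie on a side of $\Q$ parallel to $e_\alpha$. In case $ii)$ there is one such vertex; in case $iii)$ two such vertices $W_1,W_2$ lying on disjoint portions of $\partial\Q$, so performing the construction independently at $W_1$ and $W_2$ yields the two disjoint triangles required.

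\emph{Construction at $W$.} Pick a small parameter $\delta_1>0$ and let $AB$ be the segment parallel to $e_\alpha$ at $e_\alpha^\perp$-distance $\delta_1$ from $W$ on the interior side of $\Q$, where $A,B$ lie on the two sides of $\Q$ incident to $W$. This bounds a triangle $T=\operatorname{conv}(W,A,B)$ for which conditions (a), (b), (c), (d) hold: for $\delta_1$ sufficiently small the sides $WA$ and $WB$ lie in single linear pieces of $\phi$, and $\Delta=\Q\setminus T$ acquires the required sides parallel to $e_\alpha$. Let $W^*$ be the intersection of $AB$ with the internal angle bisector at $W$. Fix a second parameter $\delta_2>0$ and set $\bar{\phi}(W^*)$ to be the point at distance $\delta_2$ from $\phi(W)$ along the interior angular bisector of the angle of $\P$ at $\phi(W)$; extend $\bar{\phi}$ linearly on each of $AW^*$ and $W^*B$. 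For $\delta_2$ small compared to the local geometry of $\P$ near $\phi(W)$, the bilinear curve $\bar{\phi}|_{AB}$ lies in $\overline{\P}$ and meets $\phi(\partial\Q)$ only at $\phi(A)$ and $\phi(B)$, whence $\bar{\phi}$ is continuous, piecewise linear, injective, and condition (e) holds. The length bound $\mathcal{H}^1(\bar{\phi}(\partial T))\leq\eps$ in (f) is $O(\delta_1+\delta_2)$ by Lipschitz continuity of $\phi$ on the two incident sides.

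\emph{Geodesic estimate and main obstacle.} The triangle's own contribution is negligible: $\Psi_\alpha(\bar{\phi}|_{\partial T})\leq \mathcal{H}^1(\partial T)\cdot \operatorname{diam}(\bar{\phi}(\partial T))=O(\delta_1(\delta_1+\delta_2))$. For $\Psi_\alpha(\bar{\phi}|_{\partial\Delta})$, note that $\bar{\phi}|_{AB}$ partitions $\P$ into (the interiors of) $\bar{\phi}(\partial T)$ and $\bar{\phi}(\partial \Delta)$, so one compares $\rho_{\bar{\phi}(\partial\Delta)}$ to $\rho_\P$ slice by slice. When $\phi(W)$ is a reflex vertex of $\P$ the geodesic $\gamma_{\phi(A)\phi(B)}$ passes through $\phi(W)$ and $\bar{\phi}|_{AB}$ is literally a $\delta_2$-modification of this geodesic in the sense of Remark~\ref{def: geodesics and modified geodesics}, whence Lemma~\ref{lemma: 2.5 Pratelli2} yields $\rho_{\bar{\phi}(\partial\Delta)}(\bC,\bD)\leq \rho_\P(\bC,\bD)+\eps'$ for all relevant $\bC,\bD$. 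The main obstacle is the convex-vertex case, in which Lemma~\ref{lemma: 2.5 Pratelli2} does not apply verbatim; here I would argue by continuity, observing that as $\delta_2\to 0$ the polygon $\bar{\phi}(\partial \Delta)$ converges in Hausdorff distance to $\P$ cut along the straight segment $\phi(A)\phi(B)$, and geodesic distance is continuous under such convergence. Finally, for the $s$-parameter (and analogously $t$) outside a band of length $O(\delta_1)$ near the removed extreme, the slices of $\Delta$ coincide with those of $\Q$ and the integrand changes by at most $\eps'$ by the previous step, while on the removed band the $\Q$-integrand is bounded pointwise by $\mathcal{H}^1(\phi(\partial\Q))$, contributing at most $O(\delta_1\mathcal{H}^1(\phi(\partial\Q)))$. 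Choosing $\delta_1,\delta_2,\eps'$ small enough in terms of $\eps$ and $\mathcal{H}^1(\phi(\partial\Q))$ yields (f).
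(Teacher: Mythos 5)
Your construction — truncating a small triangle at the extremal vertex, choosing the bent interior cut through a point on the image bisector at $\phi(W)$, and comparing geodesics slice-by-slice via Lemma~\ref{lemma: 2.5 Pratelli2}, with a trivial bound on the short band of slices crossing the cut — is essentially the paper's proof of this lemma. You correctly flag the convex-vertex subtlety (where $\gamma^*$ is the straight segment $\phi(H^1_{t^*})\phi(H^2_{t^*})$, so the bent path is not literally a $\delta$-modification in the sense of Remark~\ref{def: geodesics and modified geodesics}); the paper glosses over this by calling $\tilde\gamma^*$ a $\delta_1$-modification in both cases, and either your continuity argument or, more simply, monotonicity of geodesic distance under domain enlargement (the bent cut leaves a polygon containing the straight-segment cut of $\P$) closes the gap for both.
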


\begin{proof}
It suffices to prove the claim for the case of $\Q$ being class $ii)$ since the class $iii)$ case is just a repetition of the same argument used at a pair of opposing vertices of $\Q$.

Assume that $\Q$ is of class $ii)$, then there exists a side of $\cR_\Q$ (the smallest rectangle containing $\Q$ with sides parallel and perpendicular to $e_{\alpha}, e_{\alpha}^{\bot}$) whose intersection with $\Q$ is exactly the vertex $W$ of $\Q$. Up to a rotation of angle $\alpha$ and a translation of $(-a^-, -b^-)$ we may assume that $\cR_\Q =  [0,\ell]\times [0,h]$ for $\ell = a^+ - a^-$ and $h = b^+ - b^-$. Further we may assume that the $\alpha$ rotation of $\Q$ has a horizontal side lying in $[0,\ell]\times\{h\}$ of that $W = (w,0)$ for some $w\in [0,\ell]$. It suffices to prove our claim for the $\alpha$-rotated, translated $\Q$ and replacing $\Psi_{\alpha}$ with $\Psi_0$.

Since $\Q$ is convex, it holds that for every $t \in (0,h)$ one has $\partial \Q \cap (\R \times \{t \})$ consists of exactly two points $H_t^1$ and $H_t^2$ with $0 \leq (H_t^1)_1 < (H_t^2)_1 \leq \ell$.  Analogously, for every $s \in (0,\ell)$ one has $\partial \Q \cap (\{s\} \times \R)$ consists of exactly two points $V_s^1$ and $V_s^2$ with $0 \leq (V_s^1)_1 < (V_s^2)_1 \leq h$. 

Let $\eps >0$ be arbitrary fixed.  We then let $\bar{\delta}= \bar{\delta}(\varphi(\partial \Q)) > 0$ be the parameter introduced directly after Definition \ref{def: geodesics and modified geodesics},  and $\delta_1 < \bar{\delta}$ be the parameter introduced in Lemma \ref{lemma: 2.5 Pratelli2} for the polygon given by $\varphi(\partial \Q)$ and the number $\frac{\eps}{2(h+\ell)}$. Let
\[ 0< \eta_1 <    \min\left\lbrace  \frac{\eps}{24} ,  \frac{\delta_1}{4} \right\rbrace  .  \]

For the following recall the definition of the points $H_t^{1,2}$ in \eqref{Useful}. Since $\varphi: \Q \to \R^2$ is continuous, injective and piecewise linear,  then we can find $0 < t^* \ll h$ such that the following properties hold 
\begin{itemize}
\item[i)] $|H^1_{t^*} -W| < \eta_1$,  $|H^2_{t^*} -W| < \eta_1$; 
\item[ii)] $\varphi$ is linear on each of the segments $H^1_{t^*}W$, $H^2_{t^*}W$;
\item[iii)] $|\varphi(H^1_{t^*}) -\varphi(W)| < \eta_1$ and $|\varphi(H^2_{t^*}) -\varphi(W)| < \eta_1$.
\end{itemize}

We denote $T$ the triangle $W H^1_{t^*} H^2_{t^*}$,  and $\Delta = \Q \setminus T$.
We call $W^* = \tfrac{1}{2}(H^1_{t^*} + H^2_{t^*})$, then $W^*$ is the intersection of the segment  $H^1_{t^*}H^2_{t^*}$ and the bisector of the angle at $W$. Then claim $a)$ is immediate and claim $b)$ is immediate from the choice of $t^*$. By construction, $\Delta$ is convex and the third side of $T$ is horizontal (i.e. parallel to $\alpha$) thus proving $c)$. By the triangular inequality and by property i), we have that 
\[  \mathcal{H}^1 (\partial T) <  4\eta_1. \]

We now consider $\gamma^*$ the geodesic connecting $ \varphi(H^1_{t^*})$ and $ \varphi(H^1_{t^*})$ inside the polygon identified by $\varphi(\partial \Q)$. There are two possibilities: either $\gamma^*$ is a segment or $\gamma^*$ is a bi-linear path lying inside $\varphi(\partial \Q)$ passing through $\varphi(W)$.  In both cases, we let $\bX$ be on the internal bisector of the corner $\varphi(W)$ such that $|\bX - \varphi(W)| < 2\eta_1$ and we call $\tilde{\gamma}^*$ the path $\varphi(H^1_{t^*}) \bX \varphi(H^2_{t^*})$. Notice that, being $2\eta_1 < \delta_1$,  $\tilde{\gamma}^*$ is a $\delta_1$-modification of $\gamma^*$ in the sense of Definition \ref{def: geodesics and modified geodesics}. Moreover, $\tilde{\gamma}^*$ lies in the interior of $\varphi(\partial \Q)$ and $\mathcal{H}^1(\tilde{\gamma}^*) \leq 4\eta_1$.

We now construct the extension $\bar{\varphi} : \partial T \cup \partial \Omega \to \R^2$ that is continuous, piecewise linear and injective. We let $\bar{\varphi} = \varphi$ on $\partial \Q$, then we have claim $d)$ by the choice of $t^*$ and we only need to define $\bar{\varphi}$ on the segment $H^1_{t^*}H^2_{t^*}$. 

We set $\bar{\varphi}(W^*) := \bX$ and then we define $\bar{\varphi}:  H^1_{t^*}H^2_{t^*} \to \R^2$ as the map that is linear on $H^1_{t^*} W^*$ and $W^* H^2_{t^*}$, such that $\bar{\varphi}(H^1_{t^*}H^2_{t^*}) = \tilde{\gamma}^*$. Then we have claim $e)$. Thanks to property iii) from the choice of $t^*$ and the bound on $\mathcal{H}^1(\tilde{\gamma}^{*})$ we can compute 
\[  \mathcal{H}^1(\bar{\varphi} (\partial T))  \leq 6\eta_1.   \]

For claim $f)$ we now estimate the quantity $\Psi_0 (\bar{\varphi}_{\rceil \partial T}) + \Psi_0 (\bar{\varphi}_{\rceil \partial \Delta})$. Thanks to our choice of $\delta_1$ and \eqref{eq: stesso poligono} of Lemma \ref{lemma: 2.5 Pratelli2}, we can observe 
$$
\begin{aligned}
\rho_{\bar{\varphi}(\partial T)} \big( \bar{\varphi}(H^1_{t}),  \bar{\varphi}(H^2_{t}) \big) &\leq \rho_{\varphi(\partial \Q)} \big( \varphi(H^1_{t}),  \varphi(H^2_{t}) \big) +\frac{\eps}{2(h+\ell)}  \quad \text{ for all } t \in (0,t^*), \\
\rho_{\bar{\varphi}(\partial \Delta)} \big( \bar{\varphi}(H^1_{t}),  \bar{\varphi}(H^2_{t}) \big) &\leq \rho_{\varphi(\partial \Q)} \big( \varphi(H^1_{t}),  \varphi(H^2_{t}) \big) + \frac{\eps}{2(h+\ell)}  \quad \text{ for all } t \in (t^*,h),  \\
\rho_{\bar{\varphi}(\partial \Delta)} \big( \bar{\varphi}(V^1_{s}),  \bar{\varphi}(V^2_{s}) \big) &\leq \rho_{\varphi(\partial \Q)} \big( \varphi(V^1_{s}),  \varphi(V^2_{s}) \big) + \frac{\eps}{2(h+\ell)}  \quad \text{ for all } s \in \big(0,(H^1_{t^*})_1\big) \cup \big((H^2_{t^*})_1,\ell\big).
\end{aligned}
$$

On the other hand,  for every $s \in ((H^1_{t^*})_1, (H^2_{t^*})_1)$ we call $V^3_{s} = (s, t^*)$ and we notice that the geodesic $\gamma_{\varphi(V^1_{s})\varphi(V^2_{s})}$ must intersect $\tilde{\gamma}^*$.

Since, by construction,  $\bar{\varphi}(V^3_{s}) \in \tilde{\gamma}^*$, then the maximal distance between $\bar{\varphi}(V^3_{s})$ and $\tilde{\gamma}^* \cap\gamma_{\varphi(V^1_{s})\varphi(V^2_{s})}$ is bounded by $\mathcal{H}^1(\tilde{\gamma}^*) \leq 4\eta_1 < \delta_1$.  Then from  \eqref{eq: diversi poligon} of Lemma \ref{lemma: 2.5 Pratelli2},  we get 
\[  \rho_{\bar{\varphi}(\partial T)} \big( \bar{\varphi}(V^1_{s}), \bar{\varphi}(V^3_{s}) \big) +  \rho_{\bar{\varphi}(\partial \Delta)} \big( \bar{\varphi}(V^3_{s}), \bar{\varphi}(V^2_{s}) \big) <  \rho_{\varphi(\partial \Q)} \big( \varphi(V^1_{s}),  \varphi(V^2_{s}) \big) + \frac{\eps}{2(h+\ell)}. \]

Therefore we can compute
\begin{equation}\label{eq: stima Psi 2/3}
\begin{split}
\Psi_0 (\bar{\varphi}_{\rceil \partial T}) +& \Psi_0 (\bar{\varphi}_{\rceil \partial \Delta}) \\
= & \int_0^{t^*} \rho_{\bar{\varphi}(\partial T)} \big( \bar{\varphi}(H_t^1),  \bar{\varphi}(H_t^2) \big) dt + \int_{(H^1_{t^*})_1}^{(H^2_{t^*})_1} \rho_{\bar{\varphi}(\partial T)} \big( \bar{\varphi}(V_s^1),  \bar{\varphi}(V_s^3) \big) ds \\
&+ \int_{t^*}^{h} \rho_{\bar{\varphi}(\partial \Delta)} \big( \bar{\varphi}(H_t^1),  \bar{\varphi}(H_t^2) \big) dt +  \int_{(H^1_{t^*})_1}^{(H^2_{t^*})_1} \rho_{\bar{\varphi}(\partial \Delta)} \big( \bar{\varphi}(V_s^3), \bar{\varphi}(V_s^2) \big) ds \\
&+ \int_{(0,(H^1_{t^*})_1) \cup ((H^2_{t^*})_1,\ell)} \rho_{\bar{\varphi}(\partial \Delta)} \big( \bar{\varphi}(V_s^1),  \bar{\varphi}(V_s^2) \big) ds\\
\leq & \int_0^h  \rho_{\varphi(\partial \Q)} \big( \varphi(V_t^1),  \varphi(V_t^2) \big) dt  + \int_0^{\ell} \rho_{\varphi(\partial \Q)} \big( \varphi(V_s^1),  \varphi(V_s^2) \big) ds \\
& +(\ell + h) \frac{\eps}{2(h+\ell)} \\
\leq & \Psi_0(\varphi) +  \frac{\eps}{2}.
\end{split}
\end{equation}

To finish the proof it suffices prove the claim in the case that $\Q$ is class $iii)$. But this is just a question of repeating the argument above for the opposing vertex, since the horizontal side of $\Q$ was not used at any point. Finally, in every case $\partial\Delta$ has two sides parallel to $\alpha$.
\end{proof}

The next result that we present concerns the extension of the boundary values inside a convex polygon $\Delta$ having two non-consecutive parallel sides.  This is an opportune generalization of the analogous results on rectangles proved in Lemma~2.11 of \cite{PR2}. 
Loosely speaking, there are two differences between the current setting and the one considered in Lemma 2.11 of \cite{PR2}. In \cite{PR2} the rectangular domain is partitioned in rectangular strips while here the polygon $\Delta$ is partitioned into strips that are not necessarily rectangular, which we later split further into a rectangle and two triangles, one at either end.

\begin{lemma}[Skeleton-strips]\label{lemma: skeleton strips}
Let $h,\ell > 0$ and let $\Delta \subset [0,\ell] \times [0,h]$ be a convex polygon and let $[0,\ell] \times [0,h]$ be the smallest rectangle containing $\Delta$. Further assume that $\partial \Delta$ has a pair of horizontal sides, one of which lies in $[0,\ell] \times \{0\}$ and the other in $[0,\ell] \times \{h\}$. For every $\phi : \partial \Delta \to \R^2$ continuous piecewise linear injective map and for every $\eps >0$ there exist $M\in \N$ and values 
\[ 0 = t_0 < t_1 < \dots < t_{M-1} < t_M = h  \]
such that the following properties hold: 
\begin{itemize}
\item[i)] $t_{i+1} - t_i < \eps$ for every $i = 0, \dots, M-1$

\item[ii)] for every $i = 0, \dots, M-1$, call $S_i := \Delta \cap \big( \R \times [t_i,t_{i+1}] \big)$ (which we call a horizontal strip). Then $\phi$ is linear on $I_i^1 $ and $I_i^2$, where $I_i^1, \,I_i^2$ are the two non horizontal segments of $\partial S_i \cap \partial \Delta$.  Moreover, 
\begin{equation}\label{eq: estimate boundary segments}
\mathcal{H}^1 (\phi(I_i^1)) + \mathcal{H}^1 (\phi(I_i^2)) < \eps
\end{equation}

\item[iii)] there exists $\bar{\phi} : \bigcup_{i=0}^{M-1} \partial S_i \to \R^2$ a continuous, piecewise linear, injective map such that $\bar{\phi} = \phi$ on $\partial \Delta$ and 
\begin{equation}\label{eq: estimate psi functions on strips}
\sum_{i=0}^{M-1} \Psi_0 (\bar{\phi} _{\rceil \partial S_i}) \leq \Psi_0 (\phi) + \eps 
\end{equation}

\item[iv)] for every $i = 0, \dots, M-1$,  the quadrilateral $S_i$ can be decomposed in the essentially disjoint union $T_i^1 \cup R_i \cup T_i^2$, where $R_i$ is a rectangle with horizontal and vertical sides and $T_i^1,T_i^2$ are right angle triangles whose hypotenuses are $I_i^{1},  I_i^{2}$ the two segments of $\partial \Delta \cap \partial S_i$ 

\item[v)] there exists $\tilde{\phi} : \bigcup_{i=0}^{M-1} \partial T_i^1 \cup \partial R_i \cup  \partial T_i^2  \to \R^2$ a continuous, piecewise linear, injective map such that $\tilde{\phi} = \bar{\phi}$ on $\bigcup_{i=0}^{M-1} \partial S_i$ and moreover
\begin{equation}\label{eq: estimate psi functions on skeleton}
\sum_{i=0}^{M-1} \big(  \Psi_0 (\tilde{\phi} _{\rceil \partial T_i^1}) +  \Psi_0 (\tilde{\phi} _{\rceil \partial R_i}) +  \Psi_0 (\tilde{\phi} _{\rceil \partial T_i^2}) \big) \leq \Psi_0 (\phi) + \eps.
\end{equation}
\end{itemize}
\end{lemma}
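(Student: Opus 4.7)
The plan is to choose the ordinates $\{t_i\}$ first so that (i), (ii), and (iv) follow essentially from the construction, then to extend $\phi$ to $\bar\phi$ on the interior horizontal skeletons via modified geodesics to obtain (iii), and finally to subdivide each strip and extend $\bar\phi$ to $\tilde\phi$ by the same procedure at a finer scale to obtain (v).

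First I would list the finitely many critical ordinates $\tau_1 < \dots < \tau_N$: the $y$-coordinates of the vertices of $\partial\Delta$ together with the $y$-coordinates of the preimages under $\phi$ of the vertices of $\phi(\partial\Delta)$. Any admissible partition for (ii) must refine $\{0, \tau_1, \dots, \tau_N, h\}$, so I would take such a refinement and subdivide further so that $t_{i+1}-t_i<\eps$. By uniform continuity of $\phi$, a sufficiently fine subdivision also makes the images $\phi(I_i^1), \phi(I_i^2)$ short enough to satisfy \eqref{eq: estimate boundary segments}, and including the critical heights ensures that $\phi$ is linear on each of $I_i^1, I_i^2$ and that these are single slanted (or vertical) segments. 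Drawing two vertical chords inside each $S_i$ through the ``inner'' endpoints of $I_i^1$ and $I_i^2$ then produces the decomposition $S_i = T_i^1 \cup R_i \cup T_i^2$ required for (iv).

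For the extension $\bar\phi$ I would fix a small parameter $\delta_0 < \bar\delta(\phi(\partial\Delta))$ such that \eqref{eq: stesso poligono} and \eqref{eq: diversi poligon} of Lemma~\ref{lemma: 2.5 Pratelli2} hold with error on the order of $\eps/(M(h+\ell))$. For each interior partition index $i$ I would let $\gamma_i$ be the geodesic of $\phi(\partial\Delta)$ between the two endpoints of the image of the horizontal segment at height $t_i$, let $\tilde\gamma_i$ be a $\delta_0$-modification of $\gamma_i$, and define $\bar\phi$ on that segment as a piecewise linear parametrization of $\tilde\gamma_i$. Injectivity across distinct heights follows from Lemma~\ref{lemma: 2.4 Pratelli2}: the pairs of endpoints at different heights are nested on $\phi(\partial\Delta)$, so the geodesics $\gamma_i$ are pairwise essentially disjoint, and a sufficiently small $\delta_0$-modification preserves this. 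To obtain (iii) with total error bounded by $\eps/2$, I would split $\Psi_0(\bar\phi_{\rceil \partial S_i})$ into its horizontal-cut integral (in $s$) and vertical-cut integral (in $t$) and sum over $i$. The $s$-integrals cover $(0,h)$ exactly once; iterating \eqref{eq: stesso poligono} through the at most $M-1$ modified skeletons separating a given horizontal slice at height $s$ from $\partial\Delta$ yields a per-slice error that is integrable and contributes at most $\eps/8$ in total. For the $t$-integrals I would choose the piecewise linear parametrization of $\bar\phi$ on each interior horizontal segment so that the image point $\bar\phi((t, t_i))$ sits on $\tilde\gamma_i$ close to the intersection of the vertical geodesic $\gamma_{\phi((t,0))\phi((t,h))}$ with $\tilde\gamma_i$; this is precisely what is needed to iterate \eqref{eq: diversi poligon} through the $M-1$ skeletons and close the remaining $t$-integral estimate to at most $\eps/8$.

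For (v) I would apply an analogous construction inside each subpolygon $\bar\phi(\partial S_i)$: on each of the two vertical chords of $S_i$ introduced above I set $\tilde\phi$ to be a piecewise linear parametrization of a $\delta_i$-modified geodesic of $\bar\phi(\partial S_i)$ connecting the images of its endpoints, with $\delta_i$ chosen so small that the cumulative additional error summed over $i$ is at most $\eps/2$; a second application of Lemma~\ref{lemma: 2.5 Pratelli2} inside each $\bar\phi(\partial S_i)$ then yields \eqref{eq: estimate psi functions on skeleton}. The main obstacle is designing the parametrization of $\bar\phi$ on each interior horizontal segment so that \eqref{eq: diversi poligon} is applicable in the $t$-integral step: each bridge point $\bar\phi((t,t_i))$ must be close to the vertical geodesic $\gamma_{\phi((t,0))\phi((t,h))}$, yet the resulting parametrization must be piecewise linear with only finitely many pieces, and the parametrizations at different heights must be pairwise compatible so that $\bar\phi$ remains injective on the full skeleton. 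The order in which the parameters are chosen is also important: first $M$ (depending on $\eps$ and $\phi$), then $\delta_0 = \delta_0(M)$, and finally the inner parameters $\delta_i = \delta_i(M, \delta_0)$.
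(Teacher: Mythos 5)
Your overall plan --- select the $t_i$ at the critical heights to secure (i), (ii), (iv), extend $\phi$ to the internal horizontal segments using $\delta$-modified geodesics, then repeat at a finer scale inside each strip with two vertical cuts for (v) --- matches the structure of the paper's proof, but there are two genuine gaps. The first is your choice of a single $\delta_0<\bar\delta(\P)$ applied to all $M-1$ modified geodesics $\tilde\gamma_i$ of $\P=\phi(\partial\Delta)$. Lemma~\ref{lemma: 2.5 Pratelli2} gives a modification threshold that depends on the \emph{specific polygon being split}, and once $\tilde\gamma_1$ is in place the remaining polygon $\P^+=\P\setminus\P_0$ is a new polygon whose threshold $\bar\delta(\P^+)$ is not controlled by $\bar\delta(\P)$; worse, $\P^+$ itself depends on $\delta_0$. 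Your plan to ``iterate \eqref{eq: stesso poligono} through $M-1$ skeletons'' therefore implicitly applies Lemma~\ref{lemma: 2.5 Pratelli2} to these intermediate polygons with the wrong parameter. The paper resolves this by cutting off one strip at a time and re-choosing $\delta_i$ at each stage, depending on $t_{i-1}$, $h/2^{i+1}$ and the current polygon $\P\setminus\bigcup_{j<i}\P_j$; the geometric decay $\sum 2^{-i}$ together with the factor $(t_{i+1}-t_i)/h$ is what keeps the accumulated error at $\eps$.

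The second gap is the one you flag yourself: parametrizing $\bar\phi$ on each interior horizontal segment so that the bridge point at height $t_i$ lands near the vertical geodesic while remaining piecewise linear with finitely many pieces and injective across heights. This is the technical core of the step and your sketch does not supply it. The paper's mechanism is the non-decreasing ``last-intersection'' map $s\mapsto\kappa^{-1}(\bX(s))$, approximated by a continuous bijection $\bX_\sigma$ that fails $\sigma$-closeness only on an exceptional set $J_\sigma$ of measure $<\sigma$ (cf.~\eqref{eq: punti che scazzano nella riparametrizzazione}); the estimate \eqref{eq: diversi poligon} is used off $J_\sigma$, the trivial bound $2\H^1(\partial\P)$ on $J_\sigma$, and Lemma~\ref{lemma: 3.7 Pratelli3} then turns the merely continuous $\psi$ into a piecewise linear $\bar\phi$ at controlled cost. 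A smaller issue: your appeal to Lemma~\ref{lemma: 2.4 Pratelli2} to conclude the $\gamma_i$ are pairwise essentially disjoint overstates what the lemma says --- nested geodesics in $\P$ can coincide along shared segments through a reflex vertex. The recursive construction sidesteps this, since each $\tilde\gamma_i$ is a modified geodesic of the \emph{remaining} polygon $\P^+$ and so automatically lies in its interior, disjoint from the previous cuts.
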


\begin{figure}
\begin{tikzpicture}[line cap=round,line join=round,>=triangle 45,x=0.37cm,y=0.37cm]
\clip(-0.5,-1) rectangle (38.5,10.);
\fill[line width=0.5pt,color=qqwuqq,fill=qqwuqq,fill opacity=0.10000000149011612] (3,1.) -- (11,1.) -- (15.,9.) -- (1.,9.) -- cycle;
\fill[line width=0.5pt,color=qqttzz,fill=qqttzz,fill opacity=0.10000000149011612] (19.30505826175965,0.7888168118674314) -- (25.62448447945863,5.417128971308936) -- (24.33389743499898,0.9668288179997969) -- (32.56695271862089,1.1448408241321626) -- (30.386305643499412,3.904026919183829) -- (34.,6.) -- (36.30520484740057,1.5008648363968937) -- (38.30783991638968,5.417128971308936) -- (34.61409078914309,9.066375097022432) -- (27.938640559179383,4.660577945246383) -- (27.27109553618301,8.754854086290791) -- (23.66635241200261,9.422399109287163) -- (19.26055526022656,7.553273044897324) -- (24.28939443346589,6.7522190173016785) -- (20.061609287822204,5.6396439789743935) -- cycle;
\draw [line width=0.5pt,color=qqwuqq] (3,1.)-- (11,1.);
\draw [line width=1.5pt,color=qqwuqq] (3,1.)-- (2.5,3.);
\draw [line width=1.5pt,color=qqwuqq] (11,1.)-- (12,3);
\draw [line width=0.5pt,color=qqwuqq] (11,1.)-- (15.,9.);
\draw [line width=0.5pt,color=qqwuqq] (15.,9.)-- (1.,9.);
\draw [line width=0.5pt,color=qqwuqq] (1.,9.)-- (3,1.);
\draw [line width=0.5pt] (2.5,3)-- (12,3);
\draw [line width=0.5pt,color=qqttzz] (19.30505826175965,0.7888168118674314)-- (25.62448447945863,5.417128971308936);
\draw [line width=0.5pt,color=qqttzz] (25.62448447945863,5.417128971308936)-- (24.33389743499898,0.9668288179997969);
\draw [line width=0.5pt,color=qqttzz] (24.33389743499898,0.9668288179997969)-- (32.56695271862089,1.1448408241321626);
\draw [line width=0.5pt,color=qqttzz] (32.56695271862089,1.1448408241321626)-- (30.386305643499412,3.904026919183829);
\draw [line width=0.5pt,color=qqttzz] (30.386305643499412,3.904026919183829)-- (34.,6.);
\draw [line width=0.5pt,color=qqttzz] (34.,6.)-- (36.30520484740057,1.5008648363968937);
\draw [line width=0.5pt,color=qqttzz] (36.30520484740057,1.5008648363968937)-- (38.30783991638968,5.417128971308936);
\draw [line width=0.5pt,color=qqttzz] (38.30783991638968,5.417128971308936)-- (34.61409078914309,9.066375097022432);
\draw [line width=0.5pt,color=qqttzz] (34.61409078914309,9.066375097022432)-- (27.938640559179383,4.660577945246383);
\draw [line width=0.5pt,color=qqttzz] (27.938640559179383,4.660577945246383)-- (27.27109553618301,8.754854086290791);
\draw [line width=0.5pt,color=qqttzz] (27.27109553618301,8.754854086290791)-- (23.66635241200261,9.422399109287163);
\draw [line width=0.5pt,color=qqttzz] (23.66635241200261,9.422399109287163)-- (19.26055526022656,7.553273044897324);
\draw [line width=0.5pt,color=qqttzz] (19.26055526022656,7.553273044897324)-- (24.28939443346589,6.7522190173016785);
\draw [line width=0.5pt,color=qqttzz] (24.28939443346589,6.7522190173016785)-- (20.061609287822204,5.6396439789743935);
\draw [line width=0.5pt,color=qqttzz] (20.061609287822204,5.6396439789743935)-- (19.30505826175965,0.7888168118674314);
\draw [line width=0.5pt,color=ffqqqq] (25.438946697626786,0.9907217750295874)-- (25.331043767514927,1.033179502920421);
\draw [line width=0.5pt,color=ffqqqq] (25.331043767514927,1.033179502920421)-- (24.81309506138824,1.1680750181803181);
\draw [line width=0.5pt,color=ffqqqq] (24.81309506138824,1.1680750181803181)-- (26.078602946416105,6.185546421494993);
\draw [line width=0.5pt,color=ffqqqq] (26.078602946416105,6.185546421494993)-- (19.58261848345801,1.2927311342413874);
\draw [line width=0.5pt,color=ffqqqq] (19.58261848345801,1.2927311342413874)-- (19.339386253463527,1.008919817498163);
\draw [->,line width=0.5pt] (23,2) -- (23,3.7);
\draw [line width=0.5pt,dotted] (2.5,3.) -- (2.5,-1.);
\draw [line width=0.5pt,dotted] (12.,3.) -- (12.,-1.);
\draw [line width=0.5pt,dotted] (11.,1.) -- (11.,-1.);
\draw [line width=0.5pt,dotted] (3,1.) -- (3.,-1.);
\draw (-0.5, 0.5) node[anchor=north west] {$d_2, c_2$};
\draw (2.7,0.2) node[anchor=north west] {$x_1$};
\draw (9.6,0.2) node[anchor=north west] {$x_2$};
\draw (11.7,0.5) node[anchor=north west] {$c_2, d_2$};
\draw (6,6) node[anchor=north west] {$\Delta^+$};
\draw (6,2.7) node[anchor=north west] {$S_0$};
\draw (27.5,8) node[anchor=north west] {$\P$};
\draw (28,3) node[anchor=north west] {$\P^+$};
\draw (22,2.2) node[anchor=north west] {$\P_0$};
\draw (23.1,6) node[anchor=north west] {{\color{red}$\bar{\gamma}_1$}};
\draw (12,4) node[anchor=north west] {$H_{t_1}^2$};
\draw (0.1,4) node[anchor=north west] {$H_{t_1}^1$};
\draw (17.4,2) node[anchor=north west] {$\bH_{t_1}^1$};
\draw (25,1.3) node[anchor=north west] {$\bH_{t_1}^2$};
\draw (2.7,2.8) node[anchor=north west] {$I_0^{1}$};
\draw (9.8,2.8) node[anchor=north west] {$I_0^{2}$};
\begin{scriptsize}
\draw [fill=black] (2.5,3) circle (2pt);
\draw [fill=black] (12,3) circle (2pt);
\draw [fill=black] (19.339386253463527,1.008919817498163) circle (2pt);
\draw [fill=black] (25.438946697626786,0.9907217750295874) circle (2pt);
\end{scriptsize}
\end{tikzpicture}
\caption{The figure shows the slicing of the set $\Delta $ into $\Delta^+\cup S_0$ by the horizontal line $\R\times\{t_1\}$ and $\P$ into $\P^+ \cup \P_0$ by the modified geodesic called $\bar{\gamma}_1$.}
\label{Fig:Slicing}
\end{figure}
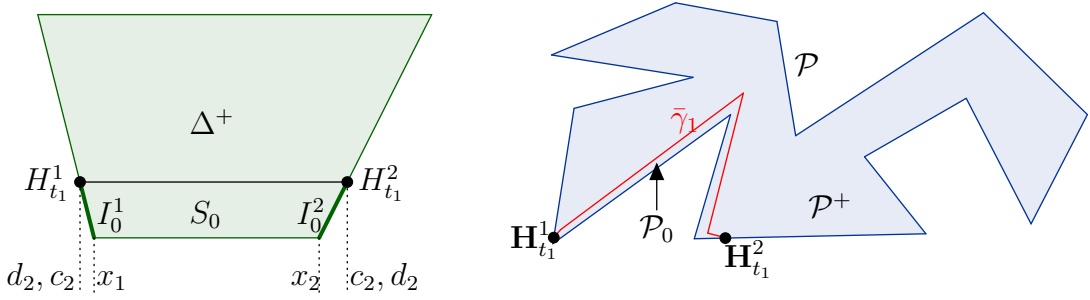

\begin{proof}
Let $\eps >0$ be fixed.  Throughout the proof we denote by $\P$ the polygon of boundary $\varphi(\partial \Delta)$.

\step{I}{Finding the value $M$ and fixing $(t_i)_{i=0 \ldots M}$}{Skel1}

There are a finite number of vertices of $\Delta$ call it $M_1$. There is similarly a finite number of vertices of $\P$, call it $M_2$. Since $|D_{\tau}\phi|\in L^{\infty}(\partial \Delta)$ we have that any segment on $\partial \Delta$ of length at most $\epsilon(1+\|D_{\tau}\phi\|_{\infty})^{-1}$ has image whose length is at most $\epsilon$. Further since $\H^1(\partial \Delta)<\infty$, we find a number $M_3$ bounded by $\epsilon^{-1}\H^1(\partial \Delta)(1+|D_{\tau}\phi|)$ such that splitting $\partial\Delta$ into $M_3$ segments then the length of the segments and their images is bounded by $\epsilon$.

We take set of all $\tilde{t}\in (0,h)$ the $t$-coordinates such that $\Delta$ has a vertex with $t$-coordinate equal to $\tilde{t}$ (their number is bounded by $M_1$) further all $\tilde{t}\in (0,h)$ such that $\partial_{\tau}\phi$ does not exist (their number is bounded by $M_2$) and then add a finite number (bounded by $M_3$) of $\tilde{t}$ such that whenever $\tilde{t}, \tilde{t}^*$ are a pair of neighbouring (with respect to the order $<$) we have
$$
|H_{\tilde{t}}^1 - H_{\tilde{t}^*}^1|<\epsilon> |H_{\tilde{t}}^2 - H_{\tilde{t}^*}^2| \quad \text{ and } \quad |\bH_{\tilde{t}}^1 - \bH_{\tilde{t}^*}^1|<\epsilon> |\bH_{\tilde{t}}^2 - \bH_{\tilde{t}^*}^2|.
$$  
Indexing this set from $\{1, \dots, M-1\}$ and calling $t_0 =0$ and $t_M = h$ we determine $t_i$ and the number $M$.

We define the strips $S_i  = (\er\times[t_i,t_{i+1}]) \cap \Delta$. They are all convex quadrilaterals with two horizontal sides.

\smallskip

\step{II}{Definition of the curve $\bar{\gamma}_1 = \bar{\phi}( \Delta \cap (\R \times \{t_1 \}))$ and the polygons $\Delta^+\cup S_0 = \Delta$ and $\P^+ \cup \P_0 =  \P$ }{Skel2}

The goal of this step is to define the piecewise linear curve $\bar{\gamma}_1$,  internal to $\P$, which will be the image of the segment $H_{t_1}^1H_{t_1}^2$ in a map $\bar{\phi}$ extending $\varphi$. The precise parameterization of $\bar{\phi}$ will be presented in the next step, here we only aim to define the curve $\bar{\gamma}_1 \subset \P$. 

Our argument is recursive and so we deal with the first curve $\bar{\gamma}_1$ defined on $H_{t_1}^1H_{t_1}^2$ separating $\Delta$ into $S_0$ and $\Delta\cap \er\times[t_1, h] = \Delta^+$ (see Figure~\ref{Fig:Slicing}).  Similarly the curve $\bar{\gamma}_1$ means dividing the polygon $\P$ into two further polygons: a polygon $\P_0$ (which will be the image of $S_0$) containing the curve $\varphi(H_{ t_0}^1H_{t_0}^2)$ and another polygon $\P^+$ (which will be the image of $\Delta^+$) (see Figure~\ref{Fig:Slicing}).

Since $\P$ is a non degenerate polygon,  let $\bar{\delta}(\P) > 0$ be the parameter of Definition \ref{def: geodesics and modified geodesics} and let $\delta_1 > 0$ be so small that 
\[  \delta_1  < \min \left\lbrace  \bar{\delta}(\P), \frac{\eps (t_1 - t_0)}{8h\mathcal{H}^1(\P)},  \frac{h}{2^3},  \frac{\eps}{2}   \right\rbrace \ \text{and Lemma \ref{lemma: 2.5 Pratelli2} applies with $\delta_1$ for $\P$ and }  \frac{\eps (t_1 - t_0)}{8h(\ell + h)}.   \]

We define
\begin{equation}\label{eq: bargamma1 is modification with variable endpoints}
\bar{\gamma}_1 \text{as a $\delta_1$-modification of the geodesic in $\P$ connecting $\bH^1_{t_1}$ and $\bH^2_{t_1}$}.
\end{equation}

\smallskip

\step{III}{Definition of $\bar{\phi}$ on $\partial S_0$}{Skel3}

In this step we care about the definition of $\bar{\phi}$ on $\partial S_0$.  More precisely,  we let $\bar{\phi} = \varphi$ on $\partial \Delta$ and we specify the parametrization $\bar{\phi}:  \partial \Delta^+ \cap (\R \times \{ t_1 \})  \to  \bar{\gamma}_1$ so that $\bar{\phi}$ is continuous, injective, piecewise linear and 
\begin{equation}\label{eq: estimate Psi function on S0}
\Psi_0(\bar{\phi} _{\rceil \partial S_0}) + \Psi_0(\bar{\phi} _{\rceil \partial \Delta^+}) \leq \Psi_0(\varphi) + \frac{\eps}{2h} (t_1 - t_0). 
\end{equation}

Let us observe, that thanks to Lemma \ref{lemma: 3.7 Pratelli3} it is enough to look for a continuous and injective parametrization $\psi: \partial \Delta^+ \cup \partial S_0  \to  \R^2$ coinciding with $\varphi$ on $\partial \Delta$ such that \eqref{eq: estimate Psi function on S0} holds for $\psi$ with error $\frac{\eps}{4h}(t_1 -t_0)$, namely 
\[  \Psi_0(\psi _{\rceil \partial S_0}) + \Psi_0(\psi _{\rceil \partial \Delta^+}) < \Psi_0(\varphi) + \frac{\eps}{4h} (t_1 - t_0).   \]
Indeed,  the correct $\bar{\phi}$ can be found as a $\delta$-linearization of $\psi$ for some $\delta$ small enough depending on $\frac{\eps}{4h}(t_1 - t_0)$ such that 
\[  \Psi_0(\bar{\phi} _{\rceil \partial S_0}) + \Psi_0(\bar{\phi} _{\rceil \partial \Delta^+}) <   \Psi_0(\psi _{\rceil \partial S_0}) + \Psi_0(\psi _{\rceil \partial \Delta^+}) + \frac{\eps}{4h}(t_1 - t_0).   \]

Thanks to our choice of $\delta_1$ and the fact that $\bar{\gamma}_1$ is a $\delta_1$-modification with variable endpoints of the geodesic connecting $\bH^1_{t_1}$ and $\bH^2_{t_1}$,  hence splitting $\P$ into the two polygons $\P_0$ and $\P^+$, we can apply Lemma \ref{lemma: 2.5 Pratelli2} to get that 
\begin{equation}\label{eq: tutte le t}
\begin{aligned}
\rho_{\P_0} \big( \bH^1_t,  \bH^2_t  \big) &\leq \rho_{\P} \big( \bH^1_t,  \bH^2_t  \big) + \frac{\eps (t_1 - t_0)}{8h(\ell + h)} \ \text{for any $t_0<t<t_1$ and } \\
  \rho_{\P^+} \big( \bH^1_t,  \bH^2_t  \big) &\leq \rho_{\P} \big( \bH^1_t,  \bH^2_t  \big) + \frac{\eps (t_1 - t_0)}{8h(\ell + h)} \ \text{for any $t_1<t<t_M$}.
\end{aligned}
\end{equation}

For short denote $c_1 = (H_{t_1}^1)_1$ and $c_2 = (H_{t_1}^2)_1$. Then $0 \leq c_1 < c_2 \leq \ell$. For every $0 < s < \ell$ we call $\gamma_s$ the geodesic inside $\P$ connecting $\bV_s^1$ and $\bV_s^2$. Moreover, whenever $c_1 < s < c_2$ we also set $V_s^3 := (s,t_1)$ the point in the intersection of $H_{t_1}^1H_{t_1}^2$ with $V_{s}^1V_{s}^2$.  
For every $s \in (0,c_1) \cup (c_2,\ell)$ we have that either both $V_{s}^1,V_{s}^2 \in S_0$ and using Lemma \ref{lemma: 2.5 Pratelli2}
$$
\rho_{\P_0} \big( \bV_{s}^1,  \bV_{s}^2 \big) \leq \rho_{\P} \big( \bV_{s}^1,  \bV_{s}^2 \big) + \frac{\eps (t_1 - t_0)}{8h(\ell + h)}
$$
or $V_{s}^1,V_{s}^2 \in \Delta^+$ and by Lemma \ref{lemma: 2.5 Pratelli2}
$$
\rho_{\P^+} \big( \bV_{s}^1,  \bV_{s}^2 \big) \leq \rho_{\P} \big( \bV_{s}^1,  \bV_{s}^2 \big) + \frac{\eps (t_1 - t_0)}{8h(\ell + h)}
$$
The two equations above can be expressed simultaneously as 
\begin{equation}\label{eq: s esterne}
\max\big\{\rho_{\P_0} \big( \bV_{s}^1,  \bV_{s}^2 \big), \rho_{\P^+} \big( \bV_{s}^1,  \bV_{s}^2 \big)\big\} \leq \rho_{\P} \big( \bV_{s}^1,  \bV_{s}^2 \big) + \frac{\eps (t_1 - t_0)}{8h(\ell + h)}
\end{equation}
for all $s \in (0,c_1) \cup (c_2,\ell)$.

On the other hand, whenever $s \in (c_1,c_2)$, the points $\bV_s^1 \in \P_0$ and $\bV_s^2 \in \P^+$ thus the geodesic $\gamma_s$ necessarily intersects $\bar{\gamma}_1$. Let $\kappa$ be the (injective and continuous) constant-speed parametrization of $\bar{\gamma}_1$ from $[0,\mathcal{H}^1(\bar{\gamma}_1)]$, $\kappa(0) = \bH^1_{t_1}$ and $\kappa(\mathcal{H}^1(\bar{\gamma}_1)) = \bH^2_{t_1}$. For every $s \in (c_1, c_2)$ we then let $\bX(s)$ be the point in $\gamma_s \cap \bar{\gamma}_1$ such that
$$
	\bX(s) = \kappa\Big(\max \Big\{x\in [0,\mathcal{H}^1(\bar{\gamma}_1)]: \kappa(x) \in \gamma_s \cap \bar{\gamma}_1\Big\}\Big).
$$

Then,  thanks to Lemma \ref{lemma: 2.4 Pratelli2}, it is easy to see that the map $s \to \kappa^{-1}(\bX(s))$ is non-decreasing, therefore if $c_1 < s < s' < c_2$ then $\bX(s') \in \kappa([\kappa^{-1}(\bX(s)), \mathcal{H}^1(\bar{\gamma}_1)])$. Notice that, in general, the function $s\to \kappa^{-1}\circ\bX(s)$ is not continuous, nor injective nor surjective. However, for every $\sigma>0$ it is always possible to find a continuous bijection $\bX_\sigma$ of $[c_1,c_2]$ onto $\bar{\gamma}_1$ such that 
\begin{equation}\label{eq: punti che scazzano nella riparametrizzazione}
\mathcal{H}^1 \big( J_\sigma \big) < \sigma  \quad \text{ where } \quad  J_\sigma:= \{  s \in (c_1, c_2) : | \bX_\sigma(s) - \bX(s) | > \sigma \}  . 
\end{equation}
We can then fix $\sigma = \frac{\delta_1}{2}$ and apply Lemma \ref{lemma: 2.5 Pratelli2} to get that 
\begin{equation}\label{eq: s belle interne}
\rho_{\P_0} \big( \bV_s^1,  \bX_{\frac{\delta_1}{2}}(s) \big) + \rho_{\P^+} \big( \bX_{\frac{\delta_1}{2}}(s) ,  \bV_s^2 \big)   \leq \rho_{\P} \big( \bV_s^1,  \bV_s^2 \big) + \frac{\eps (t_1 - t_0)}{8h(\ell + h)}
\end{equation}
for all $s \in (c_1,c_2) \setminus J_{\frac{\delta_1}{2}}$. On the other hand,  we have the trivial estimate 
\begin{equation}\label{eq: s brutte interne}
\rho_{\P_0} \big( \bV_s^1,  \bX_{\frac{\delta_1}{2}}(s) \big) + \rho_{\P^+} \big( \bX_{\frac{\delta_1}{2}}(s) ,  \bV^2_s \big)   \leq \mathcal{H}^1(\partial \P) + \mathcal{H}^1(\bar{\gamma}_1) < 2 \mathcal{H}^1(\partial \P) 
\end{equation}
 for all $s \in J_{\frac{\delta_1}{2}}$.
 
We define $\psi: \partial \Delta^+ \cup \partial S_0 \to \R^2$ as $\psi = \varphi$ on $\partial \Delta$ and $\psi(V_s^3) = \bX_{\frac{\delta_1}{2}} (s)$ for every $s \in (c_1, c_2)$. In particular, $\psi$ is continuous and injective and fails to be piecewise linear only on the segment $H^1_{t_1}H^2_{t_1}$. Moreover, gathering together \eqref{eq: tutte le t}, \eqref{eq: s esterne}, \eqref{eq: s belle interne}, \eqref{eq: s brutte interne} we deduce that 
$$
	\begin{aligned}
		\Psi_0 (\psi _{\rceil \partial S_0}) + \Psi_0 (\psi _{\rceil \partial \Delta^+}) =& \int_0^{t_1} \rho_{\P_0} \big( \psi(H^1_{t}), \psi(H^2_{t}) \big) dt + \int_{t_1}^h \rho_{\P^+} \big( \psi(H^1_{t}), \psi(H^2_{t}) \big) dt \\
		& + \int_{(0,c_1) \cup (c_2,\ell)} \min\big\{\rho_{\P_0} \big( \bV_{s}^1,  \bV_{s}^2 \big) + \rho_{\P^+} \big( \bV_{s}^1,  \bV_{s}^2 \big)\big\}  \\
		& + \int_{(c_1,c_2) \setminus J_{\frac{\delta_1}{2}}}  \rho_{\P_0} \big( \psi(V_s^1), \psi(V_s^3) \big) + \rho_{\P^+} \big( \psi(V_s^3),   \psi(V_s^2) \big)  \\
		& + \int_{J_{\frac{\delta_1}{2}}}  \rho_{\P_0} \big( \psi(V_s^2),   \psi(V_s^3) \big) + \rho_{\P^+} \big( \psi(V_s^3),   \psi(V_s^2) \big) \\
		\leq & \int_0^h \rho_{\P} \big( \varphi(H_t^1),  \varphi(H_t^2) \big) dt +   \int_{(0,\ell) \setminus J_{\frac{\delta_1}{2}}} \rho_{\P} \big( \varphi(V_s^1),  \varphi(V_s^2) \big) ds \\
		& +  \frac{\eps (t_1 - t_0)}{8h(\ell + h)} (\ell + h - \mathcal{H}^1(J_{\frac{\delta_1}{2}})) + \mathcal{H}^1(J_{\frac{\delta_1}{2}}) 2 \mathcal{H}^1(\partial \P)  \\
		\leq & \Psi_0(\varphi) + \frac{\eps}{4h}(t_1 - t_0),
	\end{aligned}
$$
where in the last inequality we used \eqref{eq: punti che scazzano nella riparametrizzazione} and the fact that $\delta_1 < \frac{\eps (t_1 - t_0)}{8h\mathcal{H}^1(\P)}$. 

Finally,  thanks to Lemma \ref{lemma: 3.7 Pratelli3} and the considerations of the first part of the step,  we can find a function $\bar{\phi} : \partial \Delta^+ \cup \partial S_0 \to \R^2$ that is continuous, injective, piecewise linear and such that \eqref{eq: estimate Psi function on S0} holds.

\smallskip
\step{IV}{Definition of $\tilde{\varphi}$ on $\partial T_0^1 \cup \partial R_0 \cup \partial T_0^2$}{Skel4}

In this step we further subdivide the strip $S_0$ in the essentially disjoint union of two triangles $T_0^1,T_0^2$ and a rectangle $R_0$ with the following properties. The rectangle $R_0$ is the biggest rectangle with horizontal and vertical sides inside $S_0$, such that the horizontal sides are contained in $\partial S_0$, while $T_0^1$, $T_0^2$ are the two disjoint right-angle triangles containing $I_0^{1}, I_0^{2}$ respectively.  


We continue to define a new $\tilde{\varphi} : \partial \Delta^+ \cup \partial T_0^1 \cup \partial R_0 \cup \partial T_0^2 \to \R^2$ coinciding with $\bar{\phi}$ on $ \partial \Delta^+ \cup \partial S_0$ such that $\tilde{\varphi}$ is injective, continuous, piecewise linear and satisfies the following estimate 
\begin{equation}\label{eq: estimate Psi function on T0l R0 T0r}
\Psi_0(\tilde{\phi} _{\rceil \partial T_0^1}) + \Psi_0(\tilde{\phi} _{\rceil \partial R_0}) + \Psi_0(\tilde{\phi} _{\rceil \partial T_0^2}) \leq \Psi_0(\bar{\phi} _{\rceil \partial S_0}) + \frac{\eps}{2h} (t_1 - t_0).
\end{equation}
Let us emphasize that $\tilde{\phi}$ will be defined so that $\tilde{\phi} (\partial T_0^1 \cup \partial R_0 \cup \partial T_0^2) \subset \P_0$.

We denote by $d_1$ and $d_2$ the two values such that the projection of $\partial S_0$ onto $\R \times \{0\}$ is exactly $[d_1,d_2]\times \{ 0\}$, and we call $x_1$ and $x_2$ those values for which the projection of $\partial R_0$ onto $\R \times \{0\}$ is the segment $[x_1, x_2]\times\{0\}$.
Notice that $d_1\leq c_1 \leq x_1 < x_2 \leq c_2 \leq d_2$.  

Being $\P_0$ a non degenerate polygon,  we let $\bar{\delta}(\P_0)$ be the parameter of Definition \ref{def: geodesics and modified geodesics} and take 
\[  \delta'_1 < \min \left\lbrace \bar{\delta}(\P_0),  \frac{\eps(t_1 - t_0)}{32h\mathcal{H}^1(\P_0)}  \right\rbrace \ \text{and Lemma \ref{lemma: 2.5 Pratelli2} applies with $\delta'_1$ for $\P_0$ and }  \frac{\eps (t_1 - t_0)}{16h(\ell + h)}. \]

Let now $\nu_{x_1}$ be the geodesic inside $\P_0$ connecting $\bar{\phi}(V_{x_1}^1)$ and $\bar{\phi}(V^3_{x_1})$ and $\bar{\nu}_{x_1}$ be its $ \delta'_1$-modification in the sense of Definition \ref{def: geodesics and modified geodesics}. In particular,  $\bar{\nu}_{x_1}$ splits $\P_0$ into two non degenerate polygons $\P_0^1$ and $\U$,  where $\P_0^1$ contains $\bar{\phi}(I_0^1)$ and $\U$ contains $\bar{\phi}(I_0^2)$. 

Thanks to Lemma \ref{lemma: 2.5 Pratelli2} we have
\begin{equation}\label{eq: tutte le s in a- a+}
\begin{split}
\rho_{\U} \big( \bar{\phi}(V_s^1),  \bar{\phi}(V_s^3) \big) &\leq \rho_{\P_0}  \big( \bar{\phi}(V^1_s),  \bar{\phi}(V_s^3) \big) + \frac{\eps (t_1 - t_0)}{16h(\ell + h)}, \text{ for all $s \in (x_1, x_2)$}\\
\rho_{\U} \big( \bar{\phi}(V_s^1),  \bar{\phi}(V_s^2) \big) &\leq \rho_{\P_0}  \big( \bar{\phi}(V_s^1),  \bar{\phi}(V_s^2) \big) + \frac{\eps (t_1 - t_0)}{16h(\ell + h)}, \text{ for all $s \in (x_2, d_2)$} 
\end{split}
\end{equation}
while for all $s \in (d_1,x_1)$
\begin{equation}\label{eq: tutte le s fuori da a- a+}
\rho_{\P_0^1} \big( \bar{\phi}(V_s^1),  \bar{\phi}(V_s^2) \big)   \leq \rho_{\P_0}  \big( \bar{\phi}(V_s^1),  \bar{\phi}(V_s^2) \big) + \frac{\eps (t_1 - t_0)}{16h(\ell + h)}. 
\end{equation}

We continue similarly as described in step~\ref{Skel3}. For every $t \in (0,t_1)$ we denote the point $\bar{\phi}(H_t^1) = \bH_t^1 \in \P_0^1$ and $\bar{\phi}(H_t^2) = \bH_t^2 \in \U$ thus the geodesic $\zeta_t$ connecting $\bH_t^1$ and $\bH_t^2$ inside $\P_0$ must intersect $\bar{\nu}_{x_1}$.  So also in this case, for every $t \in (0,t_1)$ we can find a map $\bY(t)$ identifying the last point of the intersection $\zeta_t \cap \bar{\nu}_{x_1}$ running $\bar{\nu}_{x_1}$ from $\bar{\phi}(V_{x_1}^1)$ to $\bar{\phi}(V_{x_1}^3)$.  Moreover, exactly as explained in step~\ref{Skel3}, we can find a continuous and injective approximation $\bY_{\frac{\delta'_1}{2}}$ such that 
$$
	| \bY(t) - \bY_{\frac{\delta'_1}{2}}(t)| < \frac{\delta'_1}{2} \ \text{ for all $t \in (0,t_1) \setminus J^1_{\frac{\delta'_1}{2}} \ $    and } \ \mathcal{H}^1\big(J^1_{\frac{\delta'_1}{2}} \big)  <  \frac{\delta'_1}{2}.
$$
So if we now call $H_t^3 := (t,x_1)$ then we can define $\psi^1 : \partial T_0^1 \cup \partial S_0 \to \R^2$ in this way: $\psi^1 = \bar{\phi}$ on $\partial S_0$ and $ \psi^1(H_t^3) = \bY_{\frac{\delta'_1}{2}}(t)$ for all $t \in (0,t_1)$. Then the map $\psi^1$ is continuous, injective and fails to be piecewise linear only on the segment $\{x_1\} \times [0,t_1]$.  Using Lemma \ref{lemma: 2.5 Pratelli2},  for all $t \in(0,t_1) \setminus J^1_{\frac{\delta'_1}{2}}$ we can estimate 
\begin{equation}\label{eq: t buone}
	\begin{aligned}
		\rho_{\P_0^1} \big(\psi^1(H_t^1),  \psi^1(H_t^3) \big) &+ \rho_{\U} \big(\psi^1(H_t^3),  \psi^1(H_t^2) \big) \\
		&\leq \rho_{\P_0} \big( \bar{\phi}(H_t^1),  \bar{\phi}(H_t^2) \big) +  \frac{\eps (t_1 - t_0)}{16h(\ell + h)}
	\end{aligned}
\end{equation}
while for the remaining $t \in J^l_{\frac{\delta'_1}{2}}$ we get 
\begin{equation}\label{eq: t cattive}
	\begin{aligned}
		\rho_{\P_0^1} \big(\psi^1(H_t^1),  \psi^1(H_t^3) \big) + \rho_{\U} \big(\psi^1(H_t^3),  \psi^1(H_t^2), \big)  &\leq \mathcal{H}^1(\bar{\nu}_{x_1}) + \mathcal{H}^1(\partial \P_0)\\ 
		&\leq 2 \mathcal{H}^1(\partial \P_0). 
	\end{aligned}
\end{equation}

Then \eqref{eq: tutte le s in a- a+}, \eqref{eq: tutte le s fuori da a- a+}, \eqref{eq: t buone} and \eqref{eq: t cattive} give 
\begin{align*}
\Psi_0(\psi^1 _{\rceil \partial T^1_0}) +& \Psi_0(\psi^1 _{\rceil \partial (S_0 \setminus T^1_0)})\\
 \leq & \int_{d_1}^{c_1} \rho_{\P_0}  \big( \bar{\phi}(V_s^1),  \bar{\phi}(V_s^2) \big)  + \frac{\eps (t_1 - t_0)}{16h(\ell + h)} ds\\
 &+ \int_{c_1}^{c_2} \rho_{\P_0}  \big( \bar{\phi}(V_s^1),  \bar{\phi}(V_s^3) \big)    + \frac{\eps (t_1 - t_0)}{16h(\ell + h)} ds  \\
 &+\int_{c_2}^{d_2} \rho_{\P_0}  \big( \bar{\phi}(V_s^1),  \bar{\phi}(V_s^2) \big) +  \frac{\eps (t_1 - t_0)}{16h(\ell + h)} ds\\
& +\int_{(0,t_1) \setminus J^1_{\frac{\delta'_1}{2}}} \rho_{\P_0} \big( \bar{\phi}(H_t^1),  \bar{\phi}(H_t^2) \big) +  \frac{\eps (t_1 - t_0)}{16h(\ell + h)} dt \\
&+ 2 \mathcal{H}^1\big( J^1_{\frac{\delta'_1}{2}} \big)\mathcal{H}^1(\partial \P_0) \\
\leq & \Psi_0(\bar{\phi} _{\rceil \P_0}) + \frac{\eps}{8h}(t_1 - t_0),
\end{align*}
and, as in step~\ref{Skel3},  Lemma \ref{lemma: 3.7 Pratelli3} ensures that there is some continuous, injective and piecewise linear map $\tilde{\varphi}^1 : \partial T_0^1\cup \partial S_0 \to \R^2$ such that $\tilde{\varphi}^1 = \psi^1= \bar{\phi}$ on $\partial S_0$ and 
\[ \Psi_0( \tilde{\varphi}^1  _{\rceil \partial T^1_0}) + \Psi_0(\tilde{\varphi}^1  _{\rceil \partial (S_0 \setminus T^1_0)}) \leq \Psi_0(\bar{\phi} _{\rceil \P_0}) + \frac{\eps}{4h}(t_1 - t_0).  \]

To conclude the step we need to repeat the very same argument on $\tilde{\varphi}^1  _{\rceil \partial (S_0 \setminus T^1_0)}$ by replacing $\P_0$ with $\U$ and considering a $\delta''_1$-modification of $\nu_{x_2}$,  where $\delta''_1$ is chosen so that 
\[  \delta''_1 < \min \left\lbrace \bar{\delta}(\U),  \frac{\eps(t_1 - t_0)}{32\ell\mathcal{H}^1(\U)}  \right\rbrace \ \text{and Lemma \ref{lemma: 2.5 Pratelli2} applies with $\delta''_1$ for $\U$ and }  \frac{\eps (t_1 - t_0)}{16h(\ell + h)}. \]
This would provide a continuous, injective and piecewise linear map $\tilde{\phi} :  \partial T_0^1 \cup \partial R_0 \cup  \partial T_0^2 \to \R^2$ extending $\tilde{\varphi}^1$ (hence, ultimately, $\bar{\phi}$) such that 
\begin{align*}
\Psi_0(\tilde{\phi} _{\rceil \partial T_0^1}) + \Psi_0(\tilde{\phi} _{\rceil \partial R_0}) + \Psi_0(\tilde{\phi} _{\rceil \partial T_0^2}) &\leq \Psi_0(\tilde{\varphi}^1  _{\rceil \partial (S_0 \setminus T^1_0)}) + \frac{\eps}{4h}(t_1 - t_0) \\
&\leq  \Psi_0(\bar{\phi} _{\rceil \P_0}) + \frac{\eps}{2h}(t_1 - t_0)
\end{align*}
thus proving \eqref{eq: estimate Psi function on T0l R0 T0r} and concluding the step.

\step{V}{Recursion and conclusion}{Skel5}

In this final step we want to conclude our construction by recursion.  In steps~\ref{Skel3} and \ref{Skel4} we divided $\Delta$ into a new convex polygon with two horizontal sides $\Delta^+$ and a horizontal strip $S_0$ given by a rectangle $R_0$ and two triangles $T^{1}_0, T^{2}_0$.


Then we defined continuous, injective, piecewise linear functions $\bar{\phi}, \tilde{\phi}$ such that
$\bar{\phi}=\tilde{\phi}$ on $\partial \Delta^+$ satisfies (by \eqref{eq: estimate Psi function on S0}, \eqref{eq: estimate Psi function on T0l R0 T0r} and our choice of $\delta_1$) 
\[ 
\begin{split}
\Psi_0(\bar{\phi} _{\rceil \partial S_0}) + \Psi_0(\bar{\phi} _{\rceil \partial \Delta^+}) &\leq \Psi_0(\varphi) + \frac{\eps}{2h} \big(t_1 - t_0 + 2\delta_1\big) \\
&\leq  \Psi_0(\varphi) +\frac{\eps}{2h} (t_1 - t_0) + \frac{\eps}{4}  \frac{1}{2},  \\
\Psi_0(\tilde{\phi} _{\rceil \partial T_0^1}) + \Psi_0(\tilde{\phi} _{\rceil \partial R_0}) + \Psi_0(\tilde{\phi} _{\rceil \partial T_0^2}) &\leq \Psi_0(\bar{\phi} _{\rceil \partial S_0}) + \frac{\eps}{2h} \big(t_1 - t_0 + 2\delta_1\big) \\
&\leq \Psi_0(\bar{\phi} _{\rceil \partial S_0}) +  \frac{\eps}{2h} (t_1 - t_0) + \frac{\eps}{4} \frac{1}{2}.
\end{split}
  \]
Iterating the construction and choosing for every $i$ the parameter $\delta_i$ suitably small depending on $t_{i-1}$, $\frac{h}{2^{i+1}}$ and the polygon $\P^+ = \P \setminus \bigcup_{j=0}^{i-1} \P_j$, we then find  
\[
\sum_{i=0}^{M-1} \Psi_0(\bar{\phi} _{\rceil \partial S_i})  \leq \Psi_0(\varphi) +  \frac{\eps}{2h} \sum_{i=0}^{M-1} (t_{i+1} - t_i) + \frac{\eps}{4} \sum_{i=0}^{M-1} \frac{1}{2^i} 
\]
and 
\[ \sum_{i=0}^{M-1 }\Psi_0(\tilde{\phi} _{\rceil \partial T_i^1}) + \Psi_0(\tilde{\phi} _{\rceil \partial R_i}) + \Psi_0(\tilde{\phi} _{\rceil \partial T_i^2}) \leq  \sum_{i=0}^{M-1 }\Psi_0(\bar{\phi} _{\rceil \partial S_i}) +  \frac{\eps}{2h} \sum_{i=0}^{M-1} (t_{i+1} - t_i) + \frac{\eps}{4} \sum_{i=0}^{M-1} \frac{1}{2^i},   \]
which finally imply \eqref{eq: estimate psi functions on strips} and \eqref{eq: estimate psi functions on skeleton} respectively. 

\end{proof}

\section{Piecewise affine extension}

In this subsection we investigate two possible finitely piecewise affine homeomorphic extension inside triangles.  

\begin{lemma}[Extension-direct]\label{lemma: extension up & bottom triangles}
Let $T \subset \R^2$ be a triangle of corners $A,B,C$ such that $BC$ is horizontal and $A^*$ is the intersection of $BC$ with the bisector of the angle at $A$. If $\phi : \partial T \to \R^2$ is continuous, injective and linear on each of the segments $AB, AC, BA^*$ and $A^*C$, then there exists a bi-affine homeomorphism $v: T \to \R^2$ such that $v = \phi$ on $\partial T$ and 
\begin{equation}\label{eq: estimate on top & bottom triangles}
\| Dv \|_0 (T) \leq \mathcal{H}^1(\phi(\partial T)) \mathcal{H}^1(\partial T).
\end{equation}
\end{lemma}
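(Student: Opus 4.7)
The plan is to split $T$ along the bisector segment $AA^*$ into the two sub-triangles $T_1 = ABA^*$ and $T_2 = AA^*C$, and to define $v$ on each $T_i$ as the unique affine map sending its three corners to their $\phi$-images. First I would verify that these two affine pieces glue continuously into a map equal to $\phi$ on $\partial T$: the linearity hypothesis on each of $AB$, $BA^*$, $A^*C$, $CA$ forces $v = \phi$ on the corresponding side of $T_i$ by uniqueness of the affine extension from two values, and on the common edge $AA^*$ the two pieces coincide since they are affine and share the endpoint values $\phi(A)$ and $\phi(A^*)$.

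Second, I would argue that $v$ is a homeomorphism. Injectivity of $\phi$ on $\partial T$ makes $\phi(\partial T)$ a Jordan quadrilateral with consecutive vertices $\phi(A), \phi(B), \phi(A^*), \phi(C)$, and the images $v(T_1), v(T_2)$ are non-degenerate triangles sharing the edge $\phi(A)\phi(A^*)$. In the intended applications of this lemma, $\phi(A^*)$ is placed close to $\phi(A)$ along the internal bisector of the angle at $\phi(A)$, so $\phi(A)\phi(A^*)$ is automatically an interior diagonal of the Jordan quadrilateral; the two image triangles then tile the Jordan domain and $v$ is a bi-affine homeomorphism onto the bounded component of $\R^2 \setminus \phi(\partial T)$. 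This step is the main obstacle, since in the full generality of the statement it tacitly requires the diagonal $\phi(A)\phi(A^*)$ to be interior to $\phi(\partial T)$.

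Third, I would derive the estimate. Since $v$ is affine on each $T_i$, BV slicing gives
\[
|D_1 v|(T_i) = \int_{\R} \mathcal{H}^1\big(v(T_i \cap (\R \times \{c\}))\big) \, dc,
\]
and the image of any horizontal chord of $T_i$ is itself a chord of the triangle $v(T_i)$, whose length is bounded by the longest side of $v(T_i)$ and hence by $\mathcal{H}^1(v(\partial T_i))/2$. The set of $c$ for which the slice is non-empty has length at most $\operatorname{diam}(T_i) \leq \mathcal{H}^1(\partial T)/2$, so $|D_j v|(T_i) \leq \tfrac{1}{4}\mathcal{H}^1(v(\partial T_i))\mathcal{H}^1(\partial T)$ for both $j \in \{1,2\}$. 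Summing and using the elementary identity
\[
\mathcal{H}^1(v(\partial T_1)) + \mathcal{H}^1(v(\partial T_2)) = \mathcal{H}^1(\phi(\partial T)) + 2|\phi(A)\phi(A^*)| \leq 2\mathcal{H}^1(\phi(\partial T))
\]
(again because the chord $\phi(A)\phi(A^*)$ is bounded by half the perimeter $\mathcal{H}^1(\phi(\partial T))$) yields \eqref{eq: estimate on top & bottom triangles}.
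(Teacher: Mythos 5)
Your proposal is correct and uses the same decomposition as the paper (splitting $T$ along the cevian $AA^*$ into two affine pieces), but the way you bound $\|Dv\|_0$ is genuinely different. The paper explicitly writes out $D_1 v$ and $D_2 v$ on each sub-triangle from the affine formulas, then estimates the resulting expressions term by term, relying on the elementary facts that $A^*\in BC$ and $|A-A^*|\leq\max(|AB|,|AC|)$. You instead use the BV slicing identity $|D_j v|(T_i)=\int\mathcal H^1(v(T_i\cap\text{line}))\,dc$, note that affine images of chords are chords whose length is controlled by half the perimeter of the image triangle, control the measure of the slicing parameter by $\mathrm{diam}(T)$, and close with the perimeter identity $\mathcal H^1(v(\partial T_1))+\mathcal H^1(v(\partial T_2))=\mathcal H^1(\phi(\partial T))+2|\phi(A)\phi(A^*)|$. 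Both routes yield exactly the stated constant $1$; yours is arguably cleaner because it avoids the coordinate-heavy computation and makes the role of the perimeters transparent, while the paper's computation is more self-contained at the level of basic calculus.

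You also put your finger on a genuine gap, which the paper's own one-line proof silently skips over. The statement simply asserts that the piecewise-affine map $v$ is a homeomorphism, but this is equivalent to the diagonal $\phi(A)\phi(A^*)$ being interior to the Jordan quadrilateral with vertices $\phi(A),\phi(B),\phi(A^*),\phi(C)$. If that quadrilateral were non-convex with its reflex vertex at $\phi(B)$ or $\phi(C)$, this diagonal would be exterior, the two affine image triangles would overlap, and no bi-affine extension matching $\phi$ on all four boundary pieces could be a homeomorphism at all (the split along $AA^*$ is essentially the only admissible bi-affine decomposition given where $\phi$ is linear). So the lemma is strictly speaking false in full generality, and an extra hypothesis (or a remark that the diagonal $\phi(A)\phi(A^*)$ is interior) is needed. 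As you observe, in every place the lemma is actually invoked --- for the triangles $T$, $T_1$, $T_2$ produced by Lemma~\ref{lemma: skeleton triangles} and for $\hat{T}$ in Lemma~\ref{lemma: extension left \& right triangles} --- the image of $W^*$ (i.e.\ $\phi(A^*)$) is taken as a $\delta$-modification lying on the internal bisector at $\phi(W)$ and inside the image polygon, which guarantees the diagonal is interior. So the usage is safe; only the blanket statement of the lemma is slightly too strong, and your proof has this gap exactly where the paper's does.
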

\begin{proof}
The proof is immediate,  indeed it is enough to consider the continuous map $v$ which is affine on each of the triangles $T_1 :=ABA^*$, $T_2 := BA^*C$.  Then we can compute 
\[  D_1 v _{\rceil T_1} = \frac{\phi(A^*) - \phi(B)}{(A^*)_1 - (B)_1}\L^2,\quad D_1 v _{\rceil T_2} = \frac{\phi(C) - \phi(A^*)}{(C)_1-(A^*)_1}\L^2,  \]
and
$$
	D_{2} v _{\rceil T_1} = \frac{\phi(A) - \phi(A^*) - D_1v_{\rceil T_1}((A)_1 -(A^*)_1)}{(A)_2-(A^*)_2}\L^2
$$
and
$$
	D_{2} v _{\rceil T_2}  =  \frac{\phi(A) - \phi(A^*) - D_1v_{\rceil T_2}((A)_1 -(A^*)_1)}{(A)_2-(A^*)_2}\L^2.
$$
In particular,  one can estimate
\begin{align*}
\| Dv \|_0 (T)  =& |D_1 v| (T_1) + |D_{2} v | (T_1)  + |D_1 v| (T_2) + |D_{2} v | (T_2) \\
 \leq&\frac{1}{2} |\phi(A^*) - \phi(B)|\big[|(A)_2-(A^*)_2| + |(A)_1 - (A^*)_1| \big]\\
 &+ \frac{1}{2} |\phi(A^*) - \phi(A)||(B)_1 - (A^*)_1| \\
 &+\frac{1}{2} |\phi(C) - \phi(A^*)|\big[|(A)_2-(A^*)_2| + |(A)_1 - (A^*)_1| \big]\\
 &+ \frac{1}{2} |\phi(A^*) - \phi(A)||(C)_1 - (A^*)_1| \\
\leq&  \mathcal{H}^1(\phi(\partial T))\frac{1}{2} \big( 4|A - A^*| + |B-C| \big) \\
\leq&  \mathcal{H}^1(\phi(\partial T))\frac{1}{2} \big( 2|A - B| + 2|A - C| + 2|B-C| \big) \\
\leq& \mathcal{H}^1(\phi(\partial T)) \mathcal{H}^1(\partial T).
\end{align*}
\end{proof}

\begin{figure}
\begin{tikzpicture}[line cap=round,line join=round,>=triangle 45,x=0.8cm,y=0.8cm]
\clip(0.5,0.5) rectangle (17.5,9.5);
\draw [shift={(1.,1.)},line width=0.5pt,color=qqwuqq,fill=qqwuqq,fill opacity=0.10000000149011612] (0,0) -- (0.:0.43137436300339144) arc (0.:45.:0.43137436300339144) -- cycle;
\fill[line width=0.5pt,color=qqqqcc,fill=qqqqcc,fill opacity=0.10000000149011612] (12.5,2.) -- (11.593534555543044,5.607074792135797) -- (10.024333735048703,6.532515496109497) -- (11.0886059695376,7.801865992760069) -- (14.278688614165699,8.281204835735275) -- (14.24,4.82) -- (17.16851183248274,3.5595506713107548) -- (14.56,2.46) -- (16.2,2.06) -- (12.,1.) -- cycle;
\fill[line width=0.5pt,color=qqwuqq,fill=qqwuqq,fill opacity=0.10000000149011612] (9.,9.) -- (9.,1.) -- (1.,1.) -- cycle;
\fill[line width=0.5pt,color=qqwuqq,fill=qqwuqq,fill opacity=0.3] (9.,7.) -- (9.,1.) -- (2.,1.) -- (8.,7.) -- cycle;
\fill[line width=0.5pt,color=qqwuqq,fill=qqwuqq,fill opacity=0.10000000149011612] (9.,8.) -- (9.,7.) -- (8.,7.) -- cycle;
\draw [line width=0.5pt,color=qqqqcc] (12.5,2.)-- (11.593534555543044,5.607074792135797);
\draw [line width=0.5pt,color=qqqqcc] (11.593534555543044,5.607074792135797)-- (10.024333735048703,6.532515496109497);
\draw [line width=0.5pt,color=qqqqcc] (10.024333735048703,6.532515496109497)-- (11.0886059695376,7.801865992760069);
\draw [line width=0.5pt,color=qqqqcc] (11.0886059695376,7.801865992760069)-- (14.278688614165699,8.281204835735275);
\draw [line width=0.5pt] (3,3)-- (9,3);
\draw [line width=0.5pt,color=qqqqcc] (14.278688614165699,8.281204835735275)-- (14.24,4.82);
\draw [line width=0.5pt,color=qqqqcc] (14.24,4.82)-- (17.16851183248274,3.5595506713107548);
\draw [line width=0.5pt,color=qqqqcc] (17.16851183248274,3.5595506713107548)-- (14.56,2.46);
\draw [line width=0.5pt,color=qqqqcc] (14.56,2.46)-- (16.2,2.06);
\draw [line width=0.5pt,color=qqqqcc] (16.2,2.06)-- (12.,1.);
\draw [line width=0.5pt,color=qqqqcc] (12.,1.)-- (12.5,2.);
\draw [line width=0.5pt,color=qqwuqq] (9.,9.)-- (9.,1.);
\draw [line width=0.5pt,color=qqwuqq] (9.,1.)-- (1.,1.);
\draw [line width=0.5pt,color=qqwuqq] (1.,1.)-- (9.,9.);
\draw [line width=0.5pt] (2.,1.)-- (9.,8.);
\draw [line width=0.5pt] (9.,7.)-- (8.,7.);
\draw [line width=0.5pt] (5.,5.)-- (5.,1.);
\draw [line width=0.5pt,dotted] (11.593534555543044,5.607074792135797)-- (11.978076027642164,6.459543886710959);
\draw [line width=0.5pt,dotted] (12.5,2.)-- (13.237707607409352,1.90524137817667);

\draw [line width=0.5pt,color=ffqqqq] (11.785805291592602,6.033309339423375)-- (11.122983404285261,5.884583689013925);

\draw [line width=0.5pt,color=ffqqqq] (12.2,1.4)-- (13.030024235644683,1.9319183297037836);
\draw [line width=0.5pt,color=ffqqqq] (11.785805291592604,6.033309339423379)-- (13.030024235644683,1.9319183297037836);

\draw [line width=0.5pt] (11.508524600980957,5.971092397214771)-- (10.799345098918286,6.075450311845103);

\draw [line width=0.5pt,color=qqwuqq] (9.,8.)-- (9.,7.);
\draw [line width=0.5pt,color=qqwuqq] (9.,7.)-- (8.,7.);
\draw [line width=0.5pt,color=qqwuqq] (8.,7.)-- (9.,8.);
\draw [->,line width=0.5pt] (10.4,5.6) -- (11.05,6.0);
\draw [->,line width=0.5pt] (11.3,3.7) -- (12.1,4.3);
\draw (13.,2.4) node[anchor=north west] {$\bX$};
\draw (11.7,6.4) node[anchor=north west] {$\bY$};
\draw (11.15,6.7) node[anchor=north west] {$\bZ$};
\begin{scriptsize}
\draw [fill]  (1.,1.) circle ( 1.5pt);
\draw  (0.7,0.8) node {$A$};
\draw [fill]  (9.,1.) circle ( 1.5pt);
\draw  (8.7,0.8) node {$B$};
\draw[fill]   (9.,9.) circle ( 1.5pt);
\draw  (9.098923911916296,9.270938609988487) node {$C$};
\draw[fill]   (12.5,2.) circle ( 1.5pt);
\draw  (11.9,2.) node {$\phi(A)$};
\draw[fill]   (11.593534555543044,5.607074792135797) circle ( 1.5pt);
\draw  (11.1,5.4) node {$\phi(C)$};
\draw[fill]   (2.,1.) circle ( 1.5pt);
\draw [fill] (1.7,0.8) node {$D$};
\draw [fill]  (9.,8.) circle ( 1.5pt);
\draw  (9.25,8.1) node {$E$};
\draw [fill]  (9.,7.) circle ( 1.5pt);
\draw  (9.25,7.1) node {$F$};
\draw  (8.7,7.35) node {$\tilde{T}$};
\draw [fill]  (8.,7.) circle ( 1.5pt);
\draw [fill]  (9.,3.) circle ( 1.5pt);
\draw [fill]  (4.,3.) circle ( 1.5pt);
\draw [fill]  (5.,5.) circle ( 1.5pt);
\draw [fill]  (5.,4.) circle ( 1.5pt);
\draw [fill]  (5.,1.) circle ( 1.5pt);
\draw [fill]  (3.,3.) circle ( 1.5pt);
\draw  (7.8,7.1) node {$G$};
\draw  (3.2,2.75) node {$H_t^1$};
\draw  (4.2,2.75) node {$H_t^2$};
\draw  (9.3,2.75) node {$H_t^3$};
\draw  (5.3,4.75) node {$V_s^3$};
\draw  (5.3,3.75) node {$V_s^2$};
\draw  (5.3,0.75) node {$V_s^1$};
\draw  (11.3,3.5) node {$\P_{ADEC}$};
\draw  (14.,3.5) node {$\P_{\Omega}$};
\draw  (10.3,5.5) node {$\P_{\tilde{T}}$};
\draw [fill]  (7.,6.) circle ( 1.5pt);
\draw  (6.7,6.) node {$Q$};
\draw  (7,3.5) node {$\Omega$};
\draw  (13.15,4.5) node {{\color{red}$\tilde{\gamma}_{\phi(D)\phi(E)}$}};
\draw [fill]  (3.,2.) circle ( 1.5pt);
\draw  (2.75,2.07) node {$P$};
\draw[fill]   (12.2,1.4) circle ( 1.5pt);
\draw  (11.7,1.3) node {$\phi(D)$};
\draw [fill]  (13.030024235644683,1.9319183297037836) circle ( 1.5pt);
\draw  [fill] (11.785805291592604,6.033309339423379) circle ( 1.5pt);
\draw [fill]  (11.508524600980957,5.971092397214771) circle ( 1.5pt);
\draw [fill] (10.799345098918282,6.075450311845105) circle ( 1.5pt);
\draw (10.3,5.99) node {$\phi(F)$};
\draw (1.6,1.2) node {$\beta$};
\end{scriptsize}
\end{tikzpicture}
  \caption{The decomposition of $T$ into $T = \Omega \cup \tilde{T} \cup ADEC$. And the corresponding decomposition of $\operatorname{int}\phi(\partial T)$ into $\P_{\Omega} \cup \P_{\tilde{T}} \cup \P_{ADEC}$}\label{Fig:Try}
\end{figure}

\begin{lemma}[Extension-indirect]\label{lemma: extension left & right triangles}
Let $T \subset \R^2$ be a triangle of corners $A,B,C$ such that $AB$ is horizontal,  $BC$ is vertical and let $\phi : \partial T \to \R^2$ be a continuous, piecewise linear, injective map such that $\phi$ is linear on the hypotenuse $AC$. 
For every $\eps >0$ there exists a finitely piecewise affine homeomorphism $v: T \to \R^2$ such that   $v = \phi$ on $\partial T$ and 
\begin{equation}\label{eq: estimate on left & right triangles}
\| Dv \|_0 (T) \leq \Psi_0(\phi)  + 242 \mathcal{H}^1(\partial T) \mathcal{H}^1(\phi(AC)) + \eps
\end{equation}
\end{lemma}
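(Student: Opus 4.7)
We follow the decomposition shown in Figure~\ref{Fig:Try}: fix a small $\eta>0$ (to be tuned at the end), pick $D\in AB$ at distance $\eta$ from $A$ and $E\in BC$ with $DE\parallel AC$ and $|CE|=O(\eta)$, then pick $F\in BC$ just below $E$ with $|EF|=\eta$ and $G\in DE$ at the same height as $F$. This partitions $T=ADEC\cup\tilde T\cup\Omega$, where $ADEC$ is the thin trapezoidal strip along the hypotenuse ($AC\parallel DE$), $\tilde T=EFG$ is the small right-angled triangle at $C$ (right angle at $F$), and $\Omega=DBFG$ is a right trapezoid whose slanted side $GD$ lies on $DE$.

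\textbf{Extending $\phi$ to the internal skeleton.} Let $\P$ be the bounded component of $\R^2\setminus\phi(\partial T)$. For $\delta\ll\eta$, let $\tilde\gamma$ be a $\delta$-modification (Definition~\ref{def: geodesics and modified geodesics}) of the geodesic in $\P$ joining $\phi(D)$ to $\phi(E)$; it splits $\P$ into $\P_{ADEC}$ containing $\phi(AC)$ and a complement. Inside the complement, take a second $\delta$-modified geodesic from $\phi(F)$ to a point $\bY\in\tilde\gamma$, set $\phi(G):=\bY$, and parameterise $\phi$ piecewise linearly on each of $DG,GE,FG$ so that their images are exactly the corresponding sub-arcs. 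Because $\phi$ is linear on $AC$, $\phi(AC)\subset\partial\P$ is a straight segment of length $\mathcal{H}^1(\phi(AC))$, so the geodesic from $\phi(A)$ to $\phi(C)$ in $\P$ has length exactly $\mathcal{H}^1(\phi(AC))$; by the triangle inequality in the geodesic metric this gives
\[
\mathcal{H}^1(\tilde\gamma)\leq\mathcal{H}^1(\phi(AC))+O(\eta).
\]
Lemmas~\ref{lemma: 2.5 Pratelli2} and~\ref{lemma: 3.7 Pratelli3} bound the distortion of geodesic distances in $\P_{ADEC}$, $\P_\Omega$, $\P_{\tilde T}$ by $O(\delta)$.

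\textbf{Extensions on the three pieces.} On $\tilde T$, Lemma~\ref{lemma: extension up & bottom triangles} applies directly (with $EF$ vertical and $FG$ horizontal) and gives $\|Dv\|_0(\tilde T)\leq \mathcal{H}^1(\phi(\partial\tilde T))\mathcal{H}^1(\partial\tilde T)=O(\eta)\leq\eps/4$. On $\Omega$, we mimic Lemma~\ref{lemma: skeleton strips}: slice $\Omega$ horizontally into very thin strips, cut each strip along a vertical segment into a central rectangle and a small right-angled corner triangle (with hypotenuse on $GD\subset DE$), define $\phi$ on the new internal segments via short $\delta'$-modified geodesics, then invoke Proposition~\ref{prop: straight minimal extension} on each rectangle and Lemma~\ref{lemma: extension up & bottom triangles} on each corner triangle; summing (and using \eqref{eq: stesso poligono}--\eqref{eq: diversi poligon} to replace sub-polygon distances by those in $\P$) bounds the rectangle contributions by $\Psi_0(\phi)+\eps/4$ and the corner contributions by $\eps/4$ for $\eta$ small. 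On $ADEC$, triangulate into sub-triangles whose edges include the bends of $\tilde\gamma$ (e.g.\ $ADG\cup AGE\cup AEC$ if $\tilde\gamma$ has the single bend $\bY$, with further sub-triangles if $\tilde\gamma$ has more) and on each sub-triangle let $v$ be the affine map agreeing with $\phi$ at its three vertices. Each sub-triangle has perimeter $\leq\mathcal{H}^1(\partial T)$, and its image-boundary length is bounded by $\mathcal{H}^1(\phi(AC))+O(\eta)$ thanks to the bound on $\mathcal{H}^1(\tilde\gamma)$. A direct computation of $|D_1v|$ and $|D_2v|$ on each sub-triangle, analogous to the calculation in Lemma~\ref{lemma: extension up & bottom triangles} but for a general triangle (yielding a bound of the type perimeter times image-perimeter times a fixed geometric factor), produces
\[
\|Dv\|_0(ADEC)\leq C\,\mathcal{H}^1(\partial T)\,\mathcal{H}^1(\phi(AC))+\eps/4
\]
with $C\leq 242$ by direct but non-sharp bookkeeping of sub-triangle contributions and the constants in Lemma~\ref{lemma: extension up & bottom triangles}. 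Summing the three pieces yields \eqref{eq: estimate on left & right triangles}.

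\textbf{Main difficulty.} The crucial estimate is $\mathcal{H}^1(\tilde\gamma)\leq\mathcal{H}^1(\phi(AC))+O(\eta)$, which converts the image-boundary length appearing in every application of Lemma~\ref{lemma: extension up & bottom triangles} from $\mathcal{H}^1(\phi(\partial T))$ (the naive bound) to $\mathcal{H}^1(\phi(AC))$; this is possible only because $\phi$ is linear on $AC$ and therefore $\phi(AC)$ already realises the in-polygon geodesic from $\phi(A)$ to $\phi(C)$. The sub-triangles of the $ADEC$ triangulation degenerate as $\eta\to 0$, so the individual affine maps have derivative norms blowing up like $1/\eta$; the rescue comes from the correspondingly shrinking areas and, crucially, from the fact that the image-boundary length of each sub-triangle is controlled by $\mathcal{H}^1(\tilde\gamma)$ rather than by the perimeter of $\P$. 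Once this is established, the remaining task is the careful but routine summation that produces the explicit (non-sharp) universal constant 242.
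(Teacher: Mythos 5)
Your overall blueprint (same skeleton $\Xi = \partial T \cup DE \cup FG$, same modified geodesic $\tilde\gamma$, same key observation that $\mathcal{H}^1(\tilde\gamma)\leq\mathcal{H}^1(\phi(AC))+O(\eta)$ because $\phi(AC)$ is already a segment) matches the paper, but the two places where real work is needed --- the trapezoid $\Omega$ and the sliver near $AC$ --- are handled by a different and, I think, gappy route.

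On $\Omega$: you propose to slice into strips, carve each strip into a rectangle plus a \emph{removed} corner triangle with hypotenuse on $GD$, and then invoke Lemma~\ref{lemma: extension up & bottom triangles} on each corner. Two problems. First, Lemma~\ref{lemma: extension up & bottom triangles} requires $\phi$ to be bilinear on the horizontal side of the triangle, split precisely at the bisector foot $A^*$; your corner triangles have a horizontal side that is an internal slice whose image is (part of) a $\delta'$-modified geodesic, whose break points are modified reflex vertices of the image polygon and have nothing to do with bisector feet, so the lemma does not apply without further surgery. Second and more seriously, the bound it would give, $\mathcal{H}^1(\phi(\partial T_i'))\,\mathcal{H}^1(\partial T_i')$, need not sum to something small: $\mathcal{H}^1(\partial T_i')=O(\xi)$, but the images of the two internal sides of $T_i'$ are sub-arcs of modified geodesics connecting points that are $O(\xi)$ apart in the preimage but not $O(\xi)$ apart in the image, so $\mathcal{H}^1(\phi(\partial T_i'))$ can stay of order $1$; summing over $M\sim 1/\xi$ strips gives $O(1)$, not $\eps/4$. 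The paper sidesteps this entirely by going the other way: it \emph{adds} a small right triangle $T_i$ outside $\Omega$ so that $R_i := S_i\cup T_i$ is a rectangle, and then collapses $\partial T_i\setminus I_i$ onto the short segment $\phi(I_i)$ (possible because $\mathcal{H}^1(\phi(I_i))<\xi$). Since $\tilde\varphi(\partial R_i)=\varphi^2(\partial S_i)$ up to the collapsed arcs, Proposition~\ref{prop: straight minimal extension} applies directly with $\Psi_0(\tilde\varphi_{\rceil\partial R_i})\leq\Psi_0(\varphi^2_{\rceil\partial S_i})+\xi\mathcal{H}^1(\Gamma\cap\partial T_i)$ and no separate extension into corner triangles is needed.

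On the sliver near $AC$: you propose a fan triangulation from $A$ (triangles $ADP$, $APQ$, $AQE$, $AEC$) with affine maps. Even granting the correct algebraic estimate $\|Dv\|_0(\tau)\leq\mathcal{H}^1(\phi(\partial\tau))\mathcal{H}^1(\partial\tau)$ for an affine map on a triangle, this needs the image fan to actually tile the image region, i.e.\ $\phi(A)$ must see $\bX,\bY,\phi(E),\phi(C)$ inside the image sliver between $\phi(AC)$ and $\tilde\gamma$. That star-convexity from $\phi(A)$ is not automatic and you do not argue it. The paper instead takes the staircase $\Gamma$ from step~\ref{EIS2} as the inner boundary (so the sliver is $\mathbb{P}$, not your $ADEC$), observes that $\mathbb{P}$ is $\tilde C$-bi-Lipschitz equivalent to the rectangle $[0,d]\times[0,|A{-}D|\sin\beta]$ (after possibly removing a tiny triangle $\hat T$ near $A$ or $C$ when $\beta$ is near $0$ or $\pi/2$), and then pushes the problem through that bi-Lipschitz chart so Corollary~\ref{coroll: non optimal extension} delivers $\|D\omega\|_0(\mathbb{P})\leq\tilde C\,\mathcal{H}^1(\partial\mathbb{P})\,\mathcal{H}^1(\tilde\varphi(\partial\mathbb{P}))\leq\tilde Cd\bd$ without any star-shape hypothesis. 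Both fixes (add-then-collapse on $\Omega$; bi-Lipschitz reparametrisation on the sliver) are the real content of the paper's proof and are absent from your plan, so the $\eps/4$ bounds for the corner and sliver contributions remain unjustified as written.
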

\begin{proof}

Let $\eps > 0$ be fixed arbitrary small.   

For simplicity of notation, through the proof we refer to $\beta$ as the internal angle of the corner $A = (0,0)$ and we will denote $\bd := |\varphi(A) - \varphi(C)| = \mathcal{H}^1(\varphi(AC))$ and $d = |A - C|$.   Clearly, since the internal angle in $B$ is $\pi/2$,  then $\beta \in (0,\pi/2)$. 

Since the polygon of boundary $\varphi(\partial T)$ is non degenerate,  then Definition \ref{def: geodesics and modified geodesics} provides some constant $\bar{\delta} > 0$,  then we consider 
\begin{equation}\label{eq: definition of eta}
\eta < \Big\{  1,  \frac{d}{2}, \frac{\bar{\delta}}{4}, \frac{\bd}{14}, \frac{\eps}{4}, \big( 1 + \frac{1}{2\tan\beta} \big)^{-1}, \|D\phi\|_{\infty}^{-1}\Big\}.
\end{equation}

The basic idea of the proof is to find a suitable one-dimensional skeleton $\Upsilon$ inside $T$,  construct a continuous, piecewise linear and injective map $\tilde{\varphi} : \Upsilon \to \R^2$ coinciding with $\varphi$ on $\partial T$ and finally perform a suitable piecewise affine extension inside each component of the partition of $T$ identified by $\Upsilon$. 

For clarity, we present the proof in three separate steps. 

\step{I}{Definition of a first skeleton $\Xi$ and a continuous piecewise linear injective map $\varphi^1 : \Xi \to \R^2$}{EIS1}

In this step we would like to construct a one-dimensional skeleton of the form $\Xi = \partial T \cup DE \cup FG$,  for some suitably chosen points $D,E,F,G$, and we will define an extension $\varphi^1$ of $\varphi$ on $DE \cup FG$ that is still continuous, piecewise linear and injective. See Figure~\ref{Fig:Try} for an illustration.

We will select $D,E,F,G$ so that 
\[  D \in AB, \quad E,F \in BC, \quad G \in DE, \quad  FG \parallel AB \quad \text{  and } \quad DE \parallel AC,  \]
and satisfying the following estimates 
\begin{equation}\label{eq: estimates from Step I}
\begin{split}
&|A - D|< \eta \quad \text{ and } \quad  |C- E| < \eta,  \\
&\mathcal{H}^1(\varphi^1(\partial \tilde{T})) < 4\eta, \\
&\Psi_0(\varphi^1 _{\rceil \partial \Omega})\leq \Psi_0(\varphi) +(\bd + 14\eta) \mathcal{H}^1(\partial T). 
\end{split}
\end{equation}
where $\tilde{T} \subset T$ is the triangle of corners $E,F,G$ and $\Omega \subset T$ is the trapezoid of corners $D, B, F, G$. 

Having fixed $\eta$, by the assumptions on $\varphi$, we can choose $D \in AB$ and $E \in BC$ so that 
\begin{itemize}
\item[i)] $|A - D| < \eta$ and $|C- E| < \eta$;
\item[ii)] $|\varphi(A) - \varphi(D)| < \eta$ and $|\varphi(C) - \varphi(E)| < \eta$;
\item[iii)] the restriction of $\varphi$ is linear on $AD$ and $CE$;
\item[iv)] $DE$ is parallel to $AC$;
\item[v)] the point $\bX$ is on the internal bisector of $\varphi(A)$ and $\bY$ on the internal bisector of $\varphi(C)$ such that $|\varphi(A) - \bX| < 2\eta$ and  $|\varphi(C) - \bY| < 2\eta$ and the piecewise linear path $\tilde{\gamma}_{\varphi(D) \varphi(E)} := \varphi(D) \bX \bY \varphi(E)$ lies in the interior of $\varphi(\partial T)$ and is a $\bar{\delta}/2$-modification of the geodesic $\gamma_{\varphi(D) \varphi(E)}$ in the sense of Definition \ref{def: geodesics and modified geodesics}.
\end{itemize}

Observe, that ii) and v) imply that $|\varphi(D) - \bX|,  |\varphi(E) - \bY| < 3\eta$ and also 
\begin{equation}\label{eq: estimate path from varphiD to varphiE}
\begin{split}
\mathcal{H}^1(\tilde{\gamma}_{\varphi(D) \varphi(E)}) &< |\varphi(D) - \bX| + |\bX - \bY| + |\varphi(E) - \bY| \\
&< 6\eta + |\varphi(A) - \bX|  + |\varphi(A) - \varphi(C)| + |\varphi(C) - \bY| \\
&< \bd + 10\eta.
\end{split}
\end{equation}

We find a point $F$ on the segment $EB$, a point $G\in DE$ and a point $\bZ \in [\bY \varphi(E)]$ such that
\begin{itemize}
\item[vi)] $|F-E | < \eta$ and $|\varphi(F) - \varphi(E)| < \eta$;
\item[vii)] $\varphi$ is linear on $EF$;
\item[viii)] $|\varphi(E) - \bZ|< \eta$ and $[\varphi(F)\bZ]$ lies in $\operatorname{int}\varphi(\partial T)$. As a consequence $|\varphi(F) - \bZ| < 2\eta$
\item[ix)] $G_2 = F_2$, $|G - E| < \eta (\sin \beta)^{-1}$ and $|G-F| < \eta (\tan\beta)^{-1}$.
\end{itemize} 

This concludes the definition of $\Xi$. Indeed i) ensures the first equation of \eqref{eq: estimates from Step I}, then $\tilde{T}$ is a right-angle triangle and $\Omega$ is a trapezoid inside $T$. 

We now proceed to construct a function $\varphi^1 : \Xi \to \R^2$ extending $\varphi$ such that the second and third estimates of  \eqref{eq: estimates from Step I} are satisfied.  In order to do that, we consider two further auxiliary points $P,Q \in DG$ so that 
\begin{itemize}
\item[x)] $|P-D| < \eta$ and $|Q-G| < \eta$;
\end{itemize}
and we set 
\[ \varphi^1(P) := \bX, \quad \varphi^1(Q) := \bY \quad \text{ and } \quad \varphi^1(G) := \bZ.  \]
We then define $\varphi^1 : \Xi \to \R^2$ so that $\varphi^1 = \varphi$ on $\partial T$,  $\varphi^1_{\rceil DP}$ is the parametrization at constant speed of the segment $\varphi(D)\bX$,  $\varphi^1_{\rceil PQ}$ is the parametrization at constant speed of the segment $\bX\bY$,  $\varphi^1_{\rceil QG}$ is the parametrization at constant speed of the segment $\bY\bZ$, $\varphi^1_{\rceil GE}$ is the parametrization at constant speed of the segment $\bZ\varphi(E)$ and, finally,  $\varphi^1_{\rceil GF}$ is the parametrization at constant speed of the segment $\bZ\varphi(F)$. A sketch of the situation is presented in Figure \ref{Fig:Try}

The second estimate of  \eqref{eq: estimates from Step I} is a direct consequence of vi) and viii),  indeed $\varphi^1$ is linear in each of the segments $EF, FG, EG$ and by triangular inequality we have 
\begin{align*}
\mathcal{H}^1(\varphi^1(\partial \tilde{T})) &= |\varphi(E) - \varphi(F)| +  |\varphi(E) - \bZ| +  |\bZ - \varphi(F)| \\
& \leq 2\big( |\varphi(E) - \varphi(F)| +  |\varphi(E) - \bZ|\big) \leq 4\eta. 
\end{align*}

The remaining part of the step is devoted to the proof of the third estimate of \eqref{eq: estimates from Step I}. In the following $X_i$ is the $i$-th coordinate of the point $X$. For every $t \in [0, E_1]$ we denote by $H_t^1, H_t^2, H_t^3$ the intersections between the horizontal line $\R \times \{t \}$ and the curves $AC, DE, BF$ respectively.  Clearly,  $H_0^1 = A, H^2_0 = D,  H_0^3 = B,  H_{F_2}^2 = G$ and $H_{F_2}^3 = F$.
Similarly, for every $s \in [D_1,B_1]$ we denote by $V_s^1, V_s^2, V_s^3$ the intersections between the vertical line $\{s\} \times \R$ and the sets $DB,  DG\cup GF, AC$ respectively. Then $V_{D_1}^1 = V_{D_1}^2 = D, V_{B_1}^3 = C, V_{B_1}^2 = F$ and  $V_{B_1}^1 = B$. 

We recall that, by definition,  $\Psi_0(\varphi^1 _{\rceil \partial \Omega})$ corresponds to the following quantity 
\[  \int_0^{F_1} \rho_{\P_{\Omega}} (\varphi^1(H_t^2), \varphi^1(H_t^3)) dt + \int_{D_1}^{B_1}  \rho_{\P_{\Omega}} (\varphi^1(V_s^1), \varphi^1(V_s^2)) ds,   \]
where $\P_{\Omega}$ is the non degenerate polygon identified by $\varphi^1(\partial \Omega)$. 

Notice that, by construction,  the curve $\varphi^1(DG)$ is exactly the piecewise linear curve $\varphi(D)\bX\bY\bZ$, so $\varphi^1(H_t^2)$ and $\varphi^1(V_s^2)$ will lie on  $\varphi(D)\bX\bY\bZ$ for every $t \in [0,F_2]$ and $s \in [D_1, G_1]$. On the other hand, when $s \in [G_1, B_1]$ we get that $\varphi^1(V_s^2)$ lies on the segment $\bZ\varphi(F)$. 

Obviously one can construct a path in $\P_{\Omega}$ from $\varphi^1(H_t^3)$ to $\varphi^1(H_t^2)$ by following $\gamma_{\varphi(H_t^1)\varphi(H_t^3)} \cap \P_{\Omega}$ and when necessary going around $\P_{\tilde{T}}$ on its boundary and travelling along $\tilde{\gamma}_{\varphi(D)\varphi(E)}$ till one gets to $\varphi^1(H_t^2)$. The length of this curve bounds the length of the geodesic in $\P_{\Omega}$ between $\varphi^1(H_t^2)$ and $\varphi^1(H_t^3)$. Further 
\begin{equation}\label{Johnny}
\begin{aligned}
 \rho_{\P_{\Omega}} \big( \varphi^1(H_t^2),   \varphi^1(H_t^3) \big) &\leq \mathcal{H}^1(\gamma_{\varphi(H_t^1)\varphi(H_t^3)} \cap \P_{\Omega}) + \mathcal{H}^1(\partial \P_{\tilde{T}}) +  \mathcal{H}^1 (\tilde{\gamma}_{\varphi(D)\varphi(E)}) \\
 & \leq \rho_{\varphi(\partial T)} \big(\varphi(H_t^1),\varphi(H_t^3) \big) + \bd + 14\eta, 
\end{aligned}
\end{equation}
where in the last inequality we used the second of \eqref{eq: estimates from Step I} and \eqref{eq: estimate path from varphiD to varphiE}.
Analogously, for every $s \in (D_1,B_1)$ it holds that 
\begin{equation}\label{Jimmy}
\begin{aligned}
 \rho_{\P_{\Omega}} \big( \varphi^1(V_s^1),   \varphi^1(V_s^2) \big)& \leq \mathcal{H}^1(\gamma_{\varphi(V_s^1)\varphi(V_s^3)} \cap \P_{\Omega}) + \mathcal{H}^1(\partial \P_{\tilde{T}}) + \mathcal{H}^1 (\tilde{\gamma}_{\varphi(D)\varphi(E)}) \\
 & \leq \rho_{\varphi(\partial T)} \big(\varphi(V_s^1),\varphi(V_s^3) \big) + \bd + 14\eta.
\end{aligned}
\end{equation}

Gathering the last two estimates together we obtain 
\begin{equation}\label{Jack}
\begin{aligned}
\Psi_0 (\varphi^1 _{\rceil \partial \Omega}) &= \int_{0}^{F_2} \rho_{\P_{\Omega}} \big( \varphi^1(H_t^2),   \varphi^1(H_t^3) \big) dt + \int_{D_1}^{B_1} \rho_{\P_{\Omega}} \big( \varphi^1(V_s^1),   \varphi^1(V_s^2) \big) ds \\
&\leq  \int_0^{F_2} \rho_{\varphi(\partial T)} \big(\varphi(H_t^1),\varphi(H_t^3) \big) dt + \int_{D_1}^{B_1}  \rho_{\varphi(\partial T)} \big(\varphi(V_s^1),\varphi(V_s^3) \big) ds \\
&\quad + (\bd + 14\eta) (|F-B| + |D-B|)\\
&\leq \Psi_0(\varphi) + (\bd + 14\eta) \mathcal{H}^1(\partial T). 
\end{aligned}
\end{equation}
which is exactly the third estimate of \eqref{eq: estimates from Step I}.

\step{II}{Definition of the final skeleton $\Upsilon$ and the continuous piecewise linear injective map $\tilde \varphi : \Upsilon \to \R^2$}{EIS2}

The aim of this step is to define a set $\Upsilon$ depicted in Figure~\ref{Fig:Conquer} and a map $\tilde{\varphi}$ satisfying the following properties: 
\begin{itemize}
\item[1)] the set $\Upsilon$ subdivides $T$ in the essentially disjoint union $\hat{T}\cup\tilde{T} \cup \mathbb{P} \cup  \bigcup_{i=0}^{M-1} R_i$, where $\mathbb{P}$ is a polygon which is $\tilde{C}$-bi-Lipschitz ($\tilde{C}$ is a universal constant) equivalent to the rectangle $[0,|A-C|]\times [0,|A-D|\sin\beta]$ , $\hat{T}$ is a triangle near either $A$ or $C$, $\tilde{T}$ is the triangle defined in step~\ref{EIS1} of corners $E,F,G$ and $R_i$ are $M$ pairwise essentially disjoint rectangles having horizontal and vertical sides. 

\item[2)] $\tilde{\varphi} : \Upsilon \to \R^2$ is piecewise linear, continuous and injective, moreover it coincides with $\varphi^1$ on $\Xi$ (where $\Xi$ is the set defined in step~\ref{EIS1}) and satisfies the following estimates
\begin{equation}\label{eq: estimate Psi function on rectangles}
\sum_{i=0}^{M-1} \Psi_0 \big( \tilde{\varphi} _{\rceil \partial R_i} \big) \leq \Psi_0 (\varphi) + (\bd + 14\eta) \mathcal{H}^1(\partial T) + \frac{\eta}{50} .
\end{equation}
\end{itemize}


We use Lemma \ref{lemma: skeleton strips} on $\phi^1$ and on $\Omega$ with
\[   \xi < \frac{1}{100}  \min  \left\lbrace \sin\beta\dist (D, AC),  \frac{\eta}{2d} \right\rbrace \]
and get a number $M \in \N$ and $M$ values 
\[  0= t_0 < t_1 < \ldots < t_{M-1} < t_M = F_2 \]
such that $|t_{i+1} - t_i| < \xi$ for every $i = 1 \dots M-1$. Moreover it is possible to decompose $\Omega$ into the essentially disjoint union of $M-1$ horizontal strips $S_i = \R \times[t_{i-1}, t_i] \cap \Omega$ such that $\varphi^1$ is linear on $I_i := \partial S_i \cap DG$ and on $ \partial S_i \cap FB$  and $\mathcal{H}^1(\varphi^1(I_i)) \leq \xi$. Further we deduce from Lemma~\ref{lemma: skeleton strips} the existence of a continuous, piecewise linear and injective 
\[ \varphi^2 : \partial T \cup DE \cup FG \cup \bigcup_{i=1}^{M-1} \partial S_i \to \R^2   \]
such that $\varphi^2 = \varphi^1$ on $ \partial T \cup DE \cup FG$ and 
\begin{equation*}
\sum_{i=0}^{M-1} \Psi_0 \big( \varphi^2 _{\rceil \partial S_i} \big) \leq \Psi_0 \big( \varphi^1 _{\rceil \partial \Omega} \big) + \xi. 
\end{equation*}
A consequence of the above inequality, the choice of $\xi$ and the third estimate of \eqref{eq: estimates from Step I} is that 
\begin{equation}\label{eq: estimate Psi function on strips}
\begin{split}
\sum_{i=0}^{M-1} \Psi_0 \big( \varphi^2 _{\rceil \partial S_i} \big) &\leq \Psi_0 (\varphi) + (\bd + 14\eta) \mathcal{H}^1(\partial T) + \xi \\
&< \Psi_0 (\varphi) + (\bd + 14\eta) \mathcal{H}^1(\partial T) + \frac{\eta}{100}.
\end{split}
\end{equation}

We now proceed to construct the skeleton $\Upsilon$ and the final map $\tilde{\varphi}$. Observe that for every $i = 0 \ldots M-1$,  the right-angle triangle $T_i$ of hypotenuse $I_i$ constructed at the exterior of $\Omega$ is still contained in $T$ and does not intersect $AC$.  Indeed, by construction, one has that $\mathcal{H}^1(\partial S_i \cap FB) = |t_{i+1} - t_i| < \xi$ and hence $\mathcal{H}^1(I_i) < \frac{\xi}{\sin \beta}$ and the distance between any point of $T_i$ and the segment $AC$ must be at least $\dist (D, AC) - \frac{\xi}{\sin\beta} > \frac{99}{100} \dist (D, AC)$,  thus ensuring that $T_i \subset T$.  Clearly,  $T_i \cap T_j$ is either empty or contains at most one corner (this corresponds to the case where $j = i \pm 1$). 

 \begin{figure}
  \begin{tikzpicture}[line cap=round,line join=round,>=triangle 45,x=1.0cm,y=1.0cm]
  \clip(0.5,0.5) rectangle (9.5,9.5);
  \fill[line width=0.5pt,color=qqqqcc,fill=qqqqcc,fill opacity=0.10000000149011612] (12.5,2.) -- (11.593534555543044,5.607074792135797) -- (10.024333735048703,6.532515496109497) -- (11.0886059695376,7.801865992760069) -- (14.278688614165699,8.281204835735275) -- (14.24,4.82) -- (17.16851183248274,3.5595506713107548) -- (14.56,2.46) -- (16.2,2.06) -- (12.,1.) -- cycle;
  \fill[line width=0.5pt,color=qqwuqq,fill=qqwuqq,fill opacity=0.10000000149011612] (9.,9.) -- (9.,1.) -- (1.,1.) -- cycle;
  \fill[line width=0.5pt,color=qqwuqq,fill=qqwuqq,fill opacity=0.10000000149011612] (9.,8.) -- (9.,7.) -- (8.,7.) -- cycle;
  \draw [line width=0.5pt,color=qqqqcc] (12.5,2.)-- (11.593534555543044,5.607074792135797);
  \draw [line width=0.5pt,color=qqqqcc] (11.593534555543044,5.607074792135797)-- (10.024333735048703,6.532515496109497);
  \draw [line width=0.5pt,color=qqqqcc] (10.024333735048703,6.532515496109497)-- (11.0886059695376,7.801865992760069);
  \draw [line width=0.5pt,color=qqqqcc] (11.0886059695376,7.801865992760069)-- (14.278688614165699,8.281204835735275);
  \draw [line width=0.5pt,color=qqqqcc] (14.278688614165699,8.281204835735275)-- (14.24,4.82);
  \draw [line width=0.5pt,color=qqqqcc] (14.24,4.82)-- (17.16851183248274,3.5595506713107548);
  \draw [line width=0.5pt,color=qqqqcc] (17.16851183248274,3.5595506713107548)-- (14.56,2.46);
  \draw [line width=0.5pt,color=qqqqcc] (14.56,2.46)-- (16.2,2.06);
  \draw [line width=0.5pt,color=qqqqcc] (16.2,2.06)-- (12.,1.);
  \draw [line width=0.5pt,color=qqqqcc] (12.,1.)-- (12.5,2.);
  \draw [line width=0.5pt,color=qqwuqq] (9.,9.)-- (9.,1.);
  \draw [line width=0.5pt,color=qqwuqq] (9.,1.)-- (1.,1.);
  \draw [line width=0.5pt,color=qqwuqq] (1.,1.)-- (9.,9.);
  \draw [line width=0.5pt] (2.,1.)-- (9.,8.);
  \draw [line width=0.5pt] (9.,7.)-- (8.,7.);
  \draw [line width=0.5pt,dotted] (11.593534555543044,5.607074792135797)-- (11.978076027642164,6.459543886710959);
  \draw [line width=0.5pt,dotted] (12.5,2.)-- (13.237707607409352,1.90524137817667);
  \draw [line width=0.5pt,color=ffqqqq] (12.2,1.4)-- (13.030024235644683,1.9319183297037836);
  \draw [line width=0.5pt,color=ffqqqq] (11.785805291592599,6.033309339423368)-- (13.030024235644683,1.9319183297037836);
  \draw [line width=0.5pt,color=ffqqqq] (11.785805291592599,6.033309339423368)-- (11.122983404285266,5.884583689013922);
  \draw [line width=0.5pt] (11.508524600980945,5.971092397214763)-- (10.799345098918291,6.0754503118451);
  \draw [line width=0.5pt,color=qqwuqq] (9.,8.)-- (9.,7.);
  \draw [line width=0.5pt,color=qqwuqq] (9.,7.)-- (8.,7.);
  \draw [line width=0.5pt,color=qqwuqq] (8.,7.)-- (9.,8.);
  \draw (12.829803016637632,2.496576185377244) node[anchor=north west] {$\bX$};
  \draw (12.015789576762625,6.245057828258506) node[anchor=north west] {$\bY$};
  \draw (11.342469817853669,6.506346092909747) node[anchor=north west] {$\bZ$};
  \draw [line width=0.5pt] (2.5,1.5)-- (2.5,2.);
  \draw [line width=0.5pt] (2.5,2.)-- (9.,2.);
  \draw [line width=0.5pt] (3.,2.)-- (3.,2.5);
  \draw [line width=0.5pt] (3.,2.5)-- (9.,2.5);
  \draw [line width=0.5pt] (3.5,2.5)-- (3.5,3.);
  \draw [line width=0.5pt] (3.5,3.)-- (9.,3.);
  \draw [line width=0.5pt] (4.,3.)-- (4.,3.5);
  \draw [line width=0.5pt] (4.,3.5)-- (9.,3.5);
  \draw [line width=0.5pt] (4.5,3.5)-- (4.5,4.);
  \draw [line width=0.5pt] (4.5,4.)-- (9.,4.);
  \draw [line width=0.5pt] (5.,4.)-- (5.,4.5);
  \draw [line width=0.5pt] (5.,4.5)-- (9.,4.5);
  \draw [line width=0.5pt] (5.5,4.5)-- (5.5,5.);
  \draw [line width=0.5pt] (5.5,5.)-- (9.,5.);
  \draw [line width=0.5pt] (6.,5.)-- (6.,5.5);
  \draw [line width=0.5pt] (6.,5.5)-- (9.,5.5);
  \draw [line width=0.5pt] (6.5,5.5)-- (6.5,6.);
  \draw [line width=0.5pt] (6.5,6.)-- (9.,6.);
  \draw [line width=0.5pt] (2.,1.)-- (2.,1.5);
  \draw [line width=0.5pt] (2.,1.5)-- (9.,1.5);
  \draw [line width=0.5pt] (7.,6.)-- (6.9968483317784385,6.4968483317784385);
  \draw [line width=0.5pt] (6.9968483317784385,6.4968483317784385)-- (9.,6.4968483317784385);
  \draw [line width=0.5pt] (7.4968483317784385,6.4968483317784385)-- (7.49901242680979,6.99901242680979);
  \draw [line width=0.5pt] (7.49901242680979,6.99901242680979)-- (9.,7.);
  \draw [line width=0.5pt] (1.5,1.)-- (1.25,1.25);
  
  \fill[line width=0.5pt,color=qqwuqq,fill=qqwuqq,fill opacity=0.2] (9.,5.5) -- (9.,5.) -- (6.,5.) -- (6.5,5.5) --cycle;
  \fill[line width=0.5pt,color=qqwuqq,fill=qqwuqq,fill opacity=0.2] (9.,4.5) -- (9.,4.) -- (5.,4.) -- (5,4.5) --cycle;
  \begin{scriptsize}
  \draw [fill=black] (1.,1.) circle (1.5pt);
  \draw [fill=black] (9.,1.) circle (1.5pt);
  \draw [fill=black] (9.,9.) circle (1.5pt);
  \draw [fill=black] (9.,8.) circle (1.5pt);
  \draw [fill=black] (9.,7.) circle (1.5pt);
  \draw [fill=black] (8.,7.) circle (1.5pt);
  \draw [fill=black] (3.,2.) circle (1.5pt);
  \draw [fill=black] (7.,6.) circle (1.5pt);
  \draw [fill=black] (2.,1.) circle (1.5pt);

  \draw  (0.7,0.8) node {$A$};
 
  \draw  (8.7,0.8) node {$B$};
 
  \draw  (9.098923911916296,9.270938609988487) node {$C$};
   \draw  (11.9,2.) node {$\phi(A)$};
   \draw  (11.1,5.4) node {$\phi(C)$};
   \draw [fill] (2,0.8) node {$D$};
   \draw  (9.25,8.1) node {$E$};
   \draw  (9.25,7.1) node {$F$};
  \draw  (8.7,7.35) node {$\tilde{T}$};
  \draw  (1.0,1.3) node {$\hat{T}$};
   \draw  (7.8,7.3) node {$G$};
  \draw  (6.7,6.2) node {$Q$};
  \draw  (2.75,2.2) node {$P$};
  \draw  (7.7,5.2) node {$S_i$};
  \draw  (7.7,4.2) node {$R_{i-2}$};
  \draw  (6.15,5.3) node {$T_i$};
  \draw  (4.7,4.3) node {$\mathbb{P}$};
  \end{scriptsize}
  \end{tikzpicture}
  \caption{The division of $T$ into $T = \mathbb{P}\cup\tilde{T}\cup\bigcup_iR_i$ and that $R_i = T_i\cup S_i$ where one vertex of $R_i$ lies on $DE$. The set $\mathbb{P}$ is uniformly bi-Lipschitz equivalent to the rectangle $[0,|A-C|]\times [0,|A-D|]$. The usage of $\hat{T}$ is optional and is used near either $A$ or $C$ when the hypotenuse is close to being either horizontal or vertical.}
  \label{Fig:Conquer}
\end{figure}

Then, connecting the horizontal and vertical sides of $T_i$ for every $i = 0 \ldots M-1$, we obtain a continuous piecewise linear path $\Gamma$ connecting $D$ and $G$ that lies inside $T$ and, at the same time,  outside the trapezoid $\Omega$.  Moreover, by construction, we have that
\[  \dist (AC, \Gamma) \geq \frac{99}{100} \dist (D, AC) = \frac{99}{100} |A-D|\sin\beta  \]
 and 
\begin{equation}\label{eq: length of Gamma}
\mathcal{H}^1(\Gamma) \leq 2|D-G| + |E-G| \leq 2|D-E| \leq 2h.
\end{equation} 

Assuming that $\beta$ is bounded away from 0 and $\tfrac{\pi}{2}$ we have that the non degenerate polygon $\mathbb{P} \subset T$ of boundary $AC \cup AD \cup \Gamma \cup GE \cup EC$ is $\tilde{C}$-bi-Lipschitz equivalent to a rectangle of side-lengths $|A-C|=d$ and $|A-D|\sin\beta$. On the other hand if $\beta$ is very close to 0 it suffices to take away the triangle $\hat{T}$ with vertexes at $A$, $A/2 + D/2$ and third vertex on $AC$ with angle $\pi/2$. Then the remaining part of $\mathbb{P}$ is again $\tilde{C}$-bi-Lipschitz equivalent to a rectangle of side-lengths $|A-C|=d$ and $|A-D|\sin\beta$. Similarly if $\beta$ is close to $\pi/2$ we subtract a triangle $\hat{T}$ close to $C$ and then the remaining set is $\tilde{C}$-bi-Lipschitz equivalent to a rectangle of side-lengths $|A-C|=d$ and $|A-D|\sin\beta$.

Let us now observe that, by construction, the polygon of boundary $DB \cup FB \cup FG \cup \Gamma$ can be seen as the essentially disjoint union of  $M$ rectangles $R_i$ of horizontal and vertical sides such that $S_i = R_i \cap \Omega$ and $T_i = R_i \setminus \Omega$.  

Notice that the rectangles $R_i$ are pairwise essentially disjoint, moreover they are essentially disjoint from $\mathbb{P}$ and $\tilde{T}$.  We are finally in position to define the set
\[ \Upsilon := \partial \mathbb{P} \cup \partial \hat{T}\cup \partial \tilde{T} \cup \bigcup_{i=0}^{M-1} \partial R_i,  \]
 satisfying all conditions of property 1). 
 
We now focus on the definition of $\tilde{\varphi} : \Upsilon \to \R^2$. 
We set 
\[\tilde{\varphi} := \varphi^2 \quad \text{ on } \quad  \left( \Xi \cup \bigcup_{i=0}^{M-1} \partial S_i \right) \setminus DG, \]
 then we only need to care about the definition of $\tilde{\varphi}$ on $\Gamma$.  The idea is to let $\tilde{\varphi}(\partial T_i \setminus I_i) = \varphi^2 (I_i)$ and decide the parametrization in such a way that $\Psi_0(\tilde{\varphi} _{\rceil \partial R_i}) \lessapprox \Psi_0(\varphi^2 _{\rceil \partial S_i})$. 

For this reason,  for every $i = 0 \ldots M-1$,  we define $\tilde{\varphi} : \partial T_i \setminus I_i \to \R^2$ as the bi-linear map that parametrizes the segment $\varphi^2(I_i)$ at constant speed. 
We recall that Lemma \ref{lemma: skeleton strips} gives a $\varphi^2$ that is linear and equal to $\varphi^1$ on each $I_i$, and $\mathcal{H}^1(\varphi^2(I_i)) < \xi$.  
As a consequence,  for any choice of $X \in \Gamma \cap \partial T_i$ and $Y \in I_i$ we get that $| \tilde{\varphi}(X) -  \varphi^2(Y)| < \xi$,  and since any horizontal/vertical slice of $R_i$ intersects $\partial S_i$, it is immediate to deduce that 
\begin{equation*}
\Psi_0 \big( \tilde{\varphi} _{\rceil \partial R_i} \big) \leq  \Psi_0 \big(\varphi^2 _{\rceil \partial S_i} \big) + \xi \mathcal{H}^1(\Gamma \cap  \partial T_i) 
\end{equation*}
for every $i = 0 \ldots M-1$. 

Recalling the first of \eqref{eq: estimates from Step I},  \eqref{eq: length of Gamma} and the bound on $\xi$,  thanks to \eqref{eq: estimate Psi function on strips} we deduce 
\[
\begin{split}
\sum_{i=0}^{M-1} \Psi_0 \big( \tilde{\varphi} _{\rceil \partial R_i} \big) &\leq \sum_{i=0}^{M-1} \Big( \Psi_0 \big(\varphi^2 _{\rceil \partial S_i} \big) + \xi \mathcal{H}^1(\Gamma \cap  \partial T_i)  \Big) \\
&\leq  \Psi_0 (\varphi) + (\bd + 14\eta) \mathcal{H}^1(\partial T) + \frac{\eta}{100} + \xi \mathcal{H}^1(\Gamma) \\
&\leq  \Psi_0 (\varphi) + (\bd + 14\eta) \mathcal{H}^1(\partial T) + \frac{\eta}{50}
\end{split}
\]
which is exactly \eqref{eq: estimate Psi function on rectangles}.
Then the fact that $\tilde{\varphi}$ is continuous, injective and coincides with $ \varphi^2 = \varphi^1$ on $\Xi \setminus DG$,  implies property 2) and hence the conclusion of the step.

\step{3}{Piecewise affine extensions in the components of $T \setminus \Upsilon$ and conclusion}{EIS3}

In this conclusive step we perform independently piecewise affine extensions in the different components of $T \setminus \Upsilon$.  Indeed,  thanks to step~\ref{EIS2} we have that $T$ is the disjoint union of $\mathbb{P}$, $\tilde{T}$, possibly $\hat{T}$ and $M$ rectangles $R_i$ and $\tilde{\varphi}$ is continuous, injective and piecewise linear on the respective boundaries. This allows to work separately in each component with piecewise affine extensions that coincide with the restriction of $\tilde{\varphi}$ on the boundary of the considered component.  

Let us first consider the rectangle $R_i$ for some $i = 0 \ldots M-1$.  
Proposition \ref{prop: straight minimal extension} applied to $R_i$ and $\tilde{\varphi}_{\rceil \partial R_i}$ with parameter $\frac{\eps}{2^i}$ provides a finitely piecewise affine homeomorphism $v_i : R_i \to \R^2$ extending $\tilde{\varphi}$ on $\partial R_i$ such that 
\begin{equation}\label{eq: estimate of the extension inside rectangle}
\| Dv_i \|_0 (R_i)\leq \Psi_0 (\tilde{\varphi}_{\rceil \partial R_i}) + \frac{\eps}{2^i}.
\end{equation}

We pass now to consider the extension inside the triangle $\tilde{T}$.  Being $\tilde{\varphi}$ linear on each side of $\partial \tilde{T}$, then we define $w: \tilde{T} \to \R^2$ as the unique affine extension of the boundary value $\tilde{\varphi}$. 
We can then directly compute 
\[  D_1 w = \frac{\tilde{\varphi}(F) - \tilde{\varphi}(G)}{|F-G|} \quad \text{ and } \quad D_2 w  = \frac{\tilde{\varphi}(E) - \tilde{\varphi}(F)}{|E-F|}.  \]
Therefore,  recalling vi), vii), viii), ix) and the fact that $\tilde{\varphi}(G) = \bZ, \tilde{\varphi}(E) = \varphi(E), \tilde{\varphi}(F) = \varphi(F)$ from step~\ref{EIS1},  we get 
\begin{equation}\label{eq: estimate of the extension inside small triangle}
\begin{split}
\| Dw \|_0(\tilde{T}) &= \frac{1}{2} \left(|E-F| |\tilde{\varphi}(G) - \tilde{\varphi}(F)| + |F-G| |\tilde{\varphi}(E) - \tilde{\varphi}(F)| \right) \\
&\leq \frac{\eta}{2} \big(  |\bZ - \tilde{\varphi}(F)| + |F-G| \big) \leq \eta^2 \left( 1 + \frac{1}{2\tan\beta} \right).
\end{split}
\end{equation}

To extend in $\hat{T}$  we use Lemma~\ref{lemma: extension up & bottom triangles}. We have $\H^1(\partial \hat{T})\leq \tilde{C} \eta$ and $\H^1(\tilde{\phi}(\partial\hat{T})) \leq \tilde{C} \|D\phi\|_{\infty}\eta$. Recalling \eqref{eq: definition of eta} we get a bi-affine homeomorphism $v$ equal to $\tilde{\phi}$ on $\partial\hat{T}$ and $\| Dv \|_0 (T) \leq \eta \leq \epsilon.$

At last, we discuss the extension inside $\mathbb{P}$.  Since $\mathbb{P}$ is $\tilde{C}$-bi-Lipschitz equivalent to $\cR := [0,|A-C|] \times [0, |A-D|\sin\beta]$,  then there exists a $\tilde{C}$-bi-Lipschitz finitely piecewise affine homeomorphism $\Phi : \mathbb{P} \to \cR$.   

We can apply Corollary \ref{coroll: non optimal extension} to $\psi:= \tilde{\varphi} \circ \Phi^{-1} : \partial \cR \to \R^2$ to get a finitely piecewise affine homeomorphism $\tilde{\omega} : \cR \to \R^2$ coinciding with $\psi$ on $\partial \cR$ such that 
\begin{equation*}
 \| D\tilde{\omega}  \|_{L^1 (\cR)} \leq   \tilde{C}\mathcal{H}^1(\partial \cR) \mathcal{H}^1(\psi(\partial \cR)).  
\end{equation*}
Then the map $\omega := \tilde{\omega} \circ \Phi : \mathbb{P} \to \R^2$ is a finitely piecewise affine homeomorphism coinciding with $\tilde{\varphi}$ on $\partial \mathbb{P}$  and satisfying 
\[   \| D\omega  \|_{L^1 (\mathbb{P})} \leq \tilde{C} \| D\tilde{\omega}  \|_{L^1 (\cR)} \leq \tilde{C} \mathcal{H}^1(\Phi(\partial \mathbb{P})) \mathcal{H}^1 (\tilde{\varphi} \circ \Phi^{-1} (\partial \cR)) \leq \tilde{C} \mathcal{H}^1(\partial \mathbb{P}) \mathcal{H}^1(\tilde{\varphi} (\partial \mathbb{P})).  \]

Once here we recall \eqref{eq: length of Gamma} and the first of \eqref{eq: estimates from Step I} to get that 
\begin{align*}
 \mathcal{H}^1( \partial \mathbb{P}) &\leq |A-C| + |D-A| + |C-E| + \mathcal{H}^1(\Gamma) + |E-G| \\
 &\leq  4d + 2\eta,
\end{align*}
while from ii),  the fact that $\tilde{\varphi}(\Gamma \cup EG) = \varphi^1(DE) = \tilde{\gamma}_{\varphi(D)\varphi(E)}$ and \eqref{eq: estimate path from varphiD to varphiE}, we deduce 
\begin{align*}
  \mathcal{H}^1 \big(  \tilde{\varphi}(\partial \mathbb{P}) \big) &\leq \mathcal{H}^1(\varphi(AC)) +  \mathcal{H}^1(\varphi(AD)) +  \mathcal{H}^1(\varphi(CE)) +  \mathcal{H}^1(\varphi^1(ED)) \\
  &\leq |\varphi(A) - \varphi(C)| + |\varphi(A) - \varphi(D)| + |\varphi(C) - \varphi(E)| + \mathcal{H}^1(\tilde{\gamma}_{\varphi(D)\varphi(E)}) \\
 &\leq 2\bd + 12\eta.
\end{align*}

Then the last two observations together with the estimate on $D\omega$ imply

\begin{equation}\label{eq: estimate of the extension inside the bad polygon}
 \| D\omega \|_0 (\mathbb{P}) \leq \tilde{C} \| D\omega  \|_{L^1 (\mathbb{P})} \leq \tilde{C}(d+\eta) (\bd + \eta) \leq \tilde{C}d\bd.
\end{equation}

We finally define $v : T \to \R^2$ the finitely piecewise affine map such that 
\[  v _{\rceil \mathbb{P}} = \omega, \quad v _{\rceil \tilde{T}} = w \quad \text{and} \quad   v_{\rceil R_i} = v_i  \,\,\, \text{for every}\,\, i = 0 \ldots M-1 .  \]
We observe that $v$ is continuous and injective, hence a homeomorphism, since $v = \tilde{\varphi}$ on $\Upsilon$.  From the same observation we also deduce that $v = \varphi$ on $\partial T$ because $\tilde{\varphi} = \varphi$ there.   Gathering together \eqref{eq: estimate of the extension inside rectangle}, \eqref{eq: estimate of the extension inside small triangle},  \eqref{eq: estimate of the extension inside the bad polygon} and \eqref{eq: estimate Psi function on rectangles} we find 
\begin{align*}
\| Dv \|_0 (T) &= \| D\omega \|_0 (\mathbb{P}) + \| Dw \|_0 (\tilde{T}) + \sum_{i=0}^{M-1} \| Dv_i \|_0 (R_i) +  \| Dv \|_0 (\hat{T}) \\
&\leq \tilde{C}d\bd + \eta^2 \left( 1 + \frac{1}{2\tan\beta} \right) + \sum_{i=0}^{M-1} \left( \Psi_0(\tilde{\varphi} _{\rceil \partial R_i}) + \frac{\eps}{2^i} \right)  + d\epsilon\\
&\leq  \tilde{C}d\bd + \eta^2 \left( 1 + \frac{1}{2\tan\beta} \right) + \Psi_0(\varphi) +C\bd \mathcal{H}^1(\partial T) +\tilde{C} \eps  \\
&\leq  \Psi_0(\varphi) + \tilde{C}\mathcal{H}^1(\partial T) \mathcal{H}^1(\varphi(AC)) + \tilde{C}\eps,
\end{align*}
where in the last inequality we used that $d = |A-C| \leq \mathcal{H}^1(\partial T)$ and $\bd = |\varphi(A - \varphi(C))|$.
\end{proof}

\section{Proof of Theorem \ref{thm: rotated minimal extension} and Theorem~\ref{Componentwise ExtensionAlt}}
\begin{proof}[Proof of Theorem \ref{thm: rotated minimal extension}]
Let $\eps >0$ be arbitrary fixed and let
\begin{equation}\label{eq:choice of eta}
0 < \eta < \min \left \lbrace  1, \frac{\eps}{\tilde{C} + \tilde{C} \mathcal{H}^1(\Q)} \right \rbrace
\end{equation} 
for some large appropriate but fixed geometric constant $\tilde{C}$. We describe in detail the proof when $\Q$ is of class $iii)$ in the sense of Remark \ref{rmk: class I) and II)}, while the other cases are an obvious modification of the current argument. 

Applying Lemma \ref{lemma: skeleton triangles} to $\Q, \phi,\alpha$ and the parameter $\eta$,  we can partition $\Q$ in two triangles $T_1,  T_2$ and a convex polygon $\Delta$ and find a continuous, piecewise linear, injective map $\bar{\phi} : \partial T_1 \cup  \partial T_2 \cup  \partial\Delta \to \R^2$ with the properties listed in the statement of Lemma \ref{lemma: skeleton triangles}. In particular, from \eqref{GeoTry} it follows that 
\begin{equation}\label{I}
 \Psi_\alpha (\bar{\phi} _{\rceil \partial \Delta}) \leq \Psi_\alpha (\phi) + \eta
\end{equation}
where \eqref{SmallTry} and \eqref{SmallImageTry} ensure that 
\begin{equation}\label{II}
\mathcal{H}^1(\partial T_1) + \mathcal{H}^1(\partial T_2) < \eta, \qquad \mathcal{H}^1(\bar{\phi}(\partial T_1)) +  \mathcal{H}^1(\bar{\phi}(\partial T_2)) < \eta.
\end{equation}

\smallskip
Furthermore, since $\Delta$ is a convex polygon with two parallel sides in direction $\alpha$,  we can apply the $\alpha$-rotated version of Lemma \ref{lemma: skeleton strips} to $\Delta,$ $\bar{\phi}^1$ and the parameter $\eta$ so to find $M$ increasing values $(t_i)_{i=0}^{M-1}$ and $\alpha$-rotated strips $S_i$,  which can be seen as the union of a rectangle $R_i$ and two triangles $T_i^1$ and $T_i^2$,  and a continuous piecewise linear injective map $\hat{\phi}: \bigcup_{i=0}^{M-1} \partial T_i^1 \cup \partial R_i \cup \partial T_i^2 \to \R^2$ coinciding with $\bar{\phi}$ on $\partial \Delta$ with the properties of Lemma  \ref{lemma: skeleton strips}. In particular, thanks to \eqref{eq: estimate psi functions on skeleton} we deduce  
\begin{equation}\label{III}
\sum_{i=0}^{M-1} \Big( \Psi_\alpha (\hat{\phi} _{\rceil \partial T_i^1}) + \Psi_\alpha (\hat{\phi} _{\rceil \partial R_i}) + \Psi_\alpha (\hat{\phi} _{\rceil \partial T_i^2})  \Big) \leq \Psi_\alpha (\bar{\phi}^1 _{\rceil \partial \Delta}) + \eta,
\end{equation}
while \eqref{eq: estimate boundary segments} ensures that 
\begin{equation}\label{IV}
\mathcal{H}^1 (\hat{\phi}(I_i^1)) + \mathcal{H}^1 (\hat{\phi}(I_i^2)) < \eta
\end{equation}
where $I_i^1 = \partial T_i^1 \cap \partial\Delta$ and $I_i^2 = \partial T_i^2 \cap \partial\Delta$. 

We are finally in position to define a function 
\[\tilde{\phi} : \partial T_1 \cup \left(\bigcup_{i=0}^{M-1} \partial T_i^1 \cup \partial R_i \cup \partial T_i^2\right) \cup \partial T_2 \to \R^2  \]
that is continuous, injective, finitely piecewise linear and such that $\tilde{\phi} = \bar{\phi}^1$ on $\partial T_1 \cup \partial T_2$ and $\tilde{\phi} = \hat{\phi}$ on $\bigcup_{i=0}^{M-1} \partial T_i^1 \cup \partial R_i  \cup \partial T_i^2$. 

Once here we will perform the homeomorphic piecewise affine extension on $T_1, T_2, T_i^1, T_i^2$ and $R_i$ independently for every $i = 0 \dots M-1$.
We first focus on the extension inside the strips $S_i$.  Let $i \in \{ 0, \ldots M-1 \}$ be fixed, we then apply the $\alpha$-rotated version of Proposition \ref{prop: straight minimal extension} to $R_i, \tilde{\phi} _{\rceil \partial R_i}$ and parameter $\eta(t_{i+1} - t_i)$ to find finitely piecewise affine homeomorphisms $v_i : R_i \to \R^2$ coinciding with $\tilde{\phi}$ on $\partial R_i$ such that 
\begin{equation}\label{V}
\|  Dv_i \|_\alpha (R_i) \leq \Psi_\alpha (\tilde{\phi} _{\rceil \partial R_i}) + \eta(t_{i+1} - t_{i}). 
\end{equation}
By construction, we have that $T_i^{1}, T_i^{2}$ are right-angle triangles whose hypotenuse is contained in $\partial \Delta \cap \partial \Q$,  we can apply the $\alpha$-rotated version of Lemma~\ref{lemma: extension left & right triangles} to $T_i^{1,2}, \tilde{\phi} _{\rceil \partial T_i^{1,2}}$ and parameter $\eta(t_{i+1} - t_i)$ to find finitely piecewise affine homeomorphisms $w_i^{1,2} : T_i^{1,2} \to \R^2$ coinciding with $\tilde{\phi}$ on $\partial T_i^{1,2}$ such that 
\begin{equation*}
\|  Dw_i^{1,2} \|_\alpha (T_i^{1,2}) \leq \Psi_\alpha(\tilde{\phi} _{\rceil \partial T_i^{1,2}}) + \tilde{C}\mathcal{H}^1(\partial T_i^{1,2}) \mathcal{H}^1(\tilde{\phi}(I_i^{1,2})) + \tilde{C}\eta(t_{i+1} - t_i),
\end{equation*}
which thanks to \eqref{IV} implies
\begin{align}\label{VI}
\notag
\|  Dw_i^{1} \|_\alpha (T_i^{1}) + \|  Dw_i^{2} \|_\alpha (T_i^{2})  \leq &\, \Psi_\alpha(\tilde{\phi} _{\rceil \partial T_i^{1}}) + \Psi_\alpha(\tilde{\phi} _{\rceil \partial T_i^{2}}) \\
&+ \big[\tilde{C} ( \mathcal{H}^1(\partial T_i^{1}) + \mathcal{H}^1(\partial T_i^{2})) + \tilde{C}(t_{i+1} - t_i)\big] \eta. 
\end{align} 

Let us also notice that, for future need, since the triangles $T_i^{1,2}$ have one angle equal to $\pi/2$ and, by construction, their hypotenuse is $I_i^{1,2}$, then one has $\mathcal{H}^1(\partial T_i^{1,2}) \leq 3 \mathcal{H}^1(I_i^{1,2})$ for every $i$.  Moreover, being $I_i^{1,2} \subset (\partial \Delta \cap \partial \Q)$ and the triangles pairwise essentially disjoint, we deduce 
\begin{equation}\label{VI bis}
\sum_{i=0}^{M-1} \big(\mathcal{H}^1(\partial T_i^{1}) + \mathcal{H}^1(\partial T_i^{2}))\big) \leq 3\sum_{i=0}^{M-1}  \big( \mathcal{H}^1(I_i^{1}) + \mathcal{H}^1(I_i^{2})\big)   \leq 3 \mathcal{H}^1(\partial \Q).
\end{equation} 

Let us now consider the extension inside the triangles $T_1, T_2$. In this case, by construction, we are in position to apply the $\alpha$-rotated version of Lemma \ref{lemma: extension up & bottom triangles} to $T_{1,2}$ and $\tilde{\phi}_{\rceil \partial T_{1,2}}$ to find bi-affine homeomorphisms $w_{1,2} : T_{1,2} \to \R^2$ such that 
\begin{equation*}
\|  Dw_{1,2} \|_\alpha (T_{1,2}) \leq \mathcal{H}^1 \big(\tilde{\phi}(\partial T_{1,2})\big)  \mathcal{H}^1 \big(\partial T_{1,2}\big) 
\end{equation*}
which, thanks to \eqref{II},  gives
\begin{equation}\label{VII}
\|  Dw_1 \|_\alpha (T_1)  + \|  Dw_2 \|_\alpha (T_2)  <2\eta^2 . 
\end{equation}

We can finally define $v: \Q \to \R^2$ to be the piecewise affine function such that 
\[  v = w_{1,2} \text{ on } T_{1,2}, \quad v = w_i^{1,2} \text{ on } T_i^{1,2} \quad \text{ and } \quad v = v_i \text{ on } R_i  \quad \text{ for every } i = 0 \ldots M-1 . \]
By construction, $v$ is continuous because $v = \tilde{\phi}$ on the one-dimensional skeleton $\partial T_1 \cup \left(\bigcup_{i=0}^{M-1} \partial T_i^1 \cup \partial R_i \cup \partial T_i^2\right) \cup \partial T_2$ and, moreover, $v$ coincides with $\tilde{\phi} = \phi$ on $\partial \Q$. To conclude, it is only left to verify the validity of \eqref{eq: rotated minimal extension}, but this is now a straightforward consequence of \eqref{V}, \eqref{VI}, \eqref{VII},\eqref{III},  \eqref{VI bis} and \eqref{I}.  Indeed,  we get 
$$
	\begin{aligned}
		\| Dv \|_\alpha(\Q) &=   \| Dw_1 \|_\alpha(T_1) + \| Dw_2 \|_\alpha(T_2) + \sum_{i=0}^{M-1} \big(  \| Dw_i^1 \|_\alpha(T_i^1)\\
		&\quad  + \| Dv_i \|_\alpha(R_i) + \| Dw_i^2 \|_\alpha(T_i^2) \big) \\
		& \leq \sum_{i=0}^{M-1} \Big( \Psi_\alpha (\hat{\phi} _{\rceil \partial T_i^1}) + \Psi_\alpha (\hat{\phi} _{\rceil \partial R_i}) + \Psi_\alpha (\hat{\phi} _{\rceil \partial T_i^2})  \Big) \\
		&\,\,+ 4\eta^2 +  \tilde{C}\eta \sum_{i=0}^{M-1} \big[ ( \mathcal{H}^1(\partial T_i^{1}) + \mathcal{H}^1(\partial T_i^{2})) + (t_{i+1} - t_i)\big]  \\
		& \leq \Psi_\alpha (\bar{\phi}^1 _{\rceil \partial \Delta}) + \eta + 4\eta^2 +  \tilde{C}\eta \big( \mathcal{H}^1(\partial \Q) +  \diam \Q \big) \\
		& \leq  \Psi_\alpha (\bar{\phi} _{\rceil \partial \Delta})  + 6\eta  +  \tilde{C}\eta \big( \mathcal{H}^1(\partial \Q)  \big),\\
		&\leq \Psi_\alpha (\phi) +  \tilde{C}\eta  \big( 1  +  \mathcal{H}^1(\partial \Q)  \big),
	\end{aligned}
$$
and then estimate \eqref{eq: rotated minimal extension} follows since $\eta$ has been chosen as in \eqref{eq:choice of eta}.

\end{proof}

\subsection{Proof of Theorem~\ref{Componentwise ExtensionAlt}}$\empty$

To prove the claim it suffices to repeat the proof of the above Lemmas as before but using the estimates from Theorem~\ref{thm:What} instead of from Proposition~\ref{prop: straight minimal extension}. In all of our calculations we estimate $\int \rho_{\P_{\Omega}} (\varphi^1(H_t^2), \varphi^1(H_t^3)) dt$ and $\int_{D_1}^{B_1}  \rho_{\P_{\Omega}} (\varphi^1(V_s^1), \varphi^1(V_s^2)) ds$ separately. Now it suffices to keep them separate instead of summing them.

The key estimates in Lemma~\ref{lemma: skeleton triangles} are \eqref{eq: stima Psi 2/3}, in Lemma~\ref{lemma: skeleton strips} they are \eqref{eq: s belle interne}, \eqref{eq: s brutte interne} and the calculation following; in Lemma~\ref{lemma: extension left & right triangles} are \eqref{Johnny}, \eqref{Jimmy}, \eqref{Jack}.

We can then repeat the proof of Theorem~\ref{thm: rotated minimal extension} with the difference that in \eqref{III}, \eqref{V} and so on we use the separate estimates, rather than the summed estimates expressed using $\Psi_{\alpha}$.
\qed

\end{document}